 \newtheorem{theorem}{Theorem}[section]
 \newtheorem{cor}[theorem]{Corollary}
 \newtheorem{lemma}[theorem]{Lemma}
  \newtheorem{example}[theorem]{Example}
 \newtheorem{prop}[theorem]{Proposition}
 \theoremstyle{definition}
 \newtheorem{defi}[theorem]{Definition}
 \theoremstyle{remark}
 \newtheorem{remark}[theorem]{Remark}
 \theoremstyle{fact}
 \newtheorem{fact}[theorem]{Fact}
\numberwithin{equation}{section}
\newcommand{\od}{ { \mathcal{O}(d)} }
\newcommand{\odn}{ { \mathcal{O}(d)^{\otimes n} } }
\newcommand{\sod}{ { \mathcal{SO}(d)} }
\newcommand{\sodn}{ { \mathcal{SO}(d)^{\otimes n} } }
\newcommand{\asign}[1]{\mathrm{asign}\left(#1 \right)}
\newcommand{\be}{\begin{equation}}
	\newcommand{\bee}{\end{equation}}
\newcommand{\en}{\begin{equation*}}
	\newcommand{\een}{\end{equation*}}
\newcommand{\eqn}{\begin{eqnarray}}
	\newcommand{\eeqn}{\end{eqnarray}}
\newcommand{\bmat}{\begin{bmatrix}}
	\newcommand{\emat}{\end{bmatrix}}
\title{\bf Orthogonal Group Synchronization with Incomplete Measurements: Error Bounds and Linear Convergence of the Generalized Power Method}
\author{Linglingzhi Zhu\thanks{Department of Systems Engineering and Engineering Management, The Chinese University of Hong Kong, Shatin, N.T., Hong Kong (llzzhu@se.cuhk.edu.hk)} \and Jinxin Wang\thanks{Department of Systems Engineering and Engineering Management, The Chinese University of Hong Kong, Shatin, N.T., Hong Kong (jxwang@se.cuhk.edu.hk)} \and Anthony Man-Cho So\thanks{Department of Systems Engineering and Engineering Management, The Chinese University of Hong Kong, Shatin, N.T., Hong Kong (manchoso@se.cuhk.edu.hk)}
}
\begin{document}
\maketitle

\begin{abstract}
Group synchronization refers to estimating a collection of group elements from the noisy pairwise measurements. Such a nonconvex problem has received much attention from numerous scientific fields including computer vision, robotics, and cryo-electron microscopy. In this paper, we focus on the orthogonal group synchronization problem with general additive noise models under incomplete  measurements, which is much more general than the commonly considered setting of complete measurements.
Characterizations of the orthogonal group synchronization problem are given from perspectives of optimality conditions as well as fixed points of the projected gradient ascent method which is also known as the generalized power method (GPM). It is well worth noting that these results still hold even without generative models.
In the meantime, we derive the local error bound property for the orthogonal group synchronization problem which is useful for the convergence rate analysis of different algorithms and can be of independent interest.
Finally,  we prove the linear convergence result of the GPM to a global maximizer under a general additive noise model based on the established local error bound property. Our theoretical convergence result holds under several deterministic conditions which can cover certain cases with adversarial noise, and as an example we specialize it to the setting of the Erd\"os-R\'enyi measurement graph and Gaussian noise.

\end{abstract}

 \bigskip
 \textbf{Keywords:} orthogonal group synchronization, incomplete measurements, error bounds, generalized power method, linear convergence

\section{Introduction}
Given some noisy pairwise measurements of $n$ unknown elements of the $d\times d$ orthogonal group $\od:= \{g \in \mathbb{R}^{d \times d} \mid g^\top g=gg^\top=I_d\}$, the  synchronization problem is to estimate those $n$ group elements efficiently and robustly. 
Concretely, the orthogonal group synchronization problem with incomplete measurements can be stated as follows. Let $G^\star:=[G_1^\star;\ldots;G_n^\star]\in \odn := \{G \in \mathbb{R}^{nd \times d} \mid G_i \in \od,\ i \in [n] \}$, where $[n] :=\{1,\ldots,n\}$ and $G_i$ denotes the $i$-th $d \times d$ block of $G$, be the underlying matrix (ground truth). The noisy incomplete pairwise measurements of a typical additive noise model are
\begin{equation}\label{eq: gsmodel1}
	C_{ij}=
	\begin{cases}
		W_{ij}\cdot (G_i^\star G_j^{\star\top}+\Delta_{ij}), & \quad \text{ if } i\neq j,\\
		W_{ii}\cdot I_d, & \quad \text{ otherwise},  
	\end{cases}
\end{equation}
where $W\in \mathbb{R}^{n \times n}$ is the symmetric adjacency matrix of the measurement graph $\mathcal{G}([n],\Omega)$ with an edge set $\Omega$, and $\Delta_{ij} \in \mathbb{R}^{d \times d}$ represents the additive noise with 
$\Delta_{ij} = \Delta_{ji}^\top$ for all $i,j\in [n]$ and $\Delta_{ii}= \bm{0}$ for all $i\in [n]$. 
By defining $A := W \otimes (1_d 1_d^\top)$, where ``$\otimes$" is the Kronecker product, the additive noise model \eqref{eq: gsmodel1} can be rewritten compactly as
\begin{align} \label{eq: gsmodel3}
	C = A \circ (G^\star G^{\star\top} + \Delta),
\end{align}
where ``$\circ$" is the Hadamard product, and $\Delta \in \mathbb{R}^{nd \times nd}$ is a symmetric matrix with its $ij$-th $d\times d$ block being $\Delta_{ij}$.
A common approach to find an estimate $\hat{G} \in \odn$ of $G^\star$ is through the maximum likelihood estimation, which is to solve the following optimization problem: 
\begin{equation}
\begin{array}{ll}
\min\limits_{G\in\mathbb{R}^{nd\times d}} &\sum\limits_{(i,j)\in\Omega,i<j}\Vert G_i G_j^\top -C_{ij} \Vert_F^2\\
\ \ \ {\rm s.t.} &G_i\in \od, \  \textrm{ for } i \in [n].
\end{array}
\end{equation}
Utilizing the fact that the variables $\{ G_i\}_{i=1}^n$ are orthogonal matrices, the above optimization problem can be further recast as the following quadratic problem,
\begin{equation}\label{Problem}\tag{QP}
\begin{array}{ll}
\max\limits_{G\in \odn } f(G):=\textrm{tr}(G^\top C G).\\
\end{array}
\end{equation}

In spite of its simple formulation, the orthogonal group synchronization problem has long gained considerable attention from various areas including computer vision \cite{chaudhury2015global}, robotics \cite{rosen2020certifiably}, and cryo-electron microscopy \cite{singer2018mathematics}. In addition, there are plenty of synchronization problems on subgroups of the orthogonal group. For example, rotation averaging and structure from motion (special orthogonal group $\mathcal{SO}(d)$) \cite{dellaert2020shonan,arie2012global,hartley2013rotation}, angular synchronization (special orthogonal group $\mathcal{SO}(2)$) \cite{singer2011angular}, joint alignment ($\mathcal{Z}_m$ group) \cite{chen2018projected}, community detection ($\mathcal{Z}_2$ group) \cite{abbe2017community,wang2020nearly}, and multi-graph matching (permutation group $\mathcal{P}(d)$) \cite{pachauri2013solving}. Hence, studying problem \eqref{Problem} not only brings significant impact to the orthogonal group synchronization, but also provides rather deep insights for other related problems.

In general, problem \eqref{Problem} is NP-hard with the max-cut problem ($d=1$) as a special case. Nevertheless, it pertains to the well-known quadratic program with quadratic constraints (QPQC), where the semidefinite programming (SDP) relaxation technique \cite{luo2010semidefinite} can be possibly employed to lead to polynomial-time algorithms (e.g., the interior point method) to obtain a feasible solution with provable approximation ratio of the optimal value. Typically, the SDP relaxation is not tight, which implies that the feasible point obtained is not necessarily globally optimal. Surprisingly, when focusing on problem \eqref{Problem} with $p=1$, the SDP relaxation has been proven to be tight if the noise strength is relatively small. More specifically, in \cite{zhang2019tightness} and \cite{ling2020solving}, the authors prove that under Gaussian noise setting, the optimal solution to problem \eqref{Problem} can be exactly recovered through the SDP relaxation approach if the noise intensity satisfies $\sigma = O(\frac{n^{1/4}}{d^{5/4}})$ and $\sigma = O(\frac{n^{1/4}}{d^{3/4}})$, respectively. Very recently, the author in \cite{ling2020improved} further improves the bound to $O(\frac{\sqrt{n}}{\sqrt{d}(\sqrt{d}+\sqrt{\log n})})$ using the leave-one-out technique as a key tool. However, a major drawback of the SDP relaxation is that it fails to scale well with $n$ as the number of variables after the SDP relaxation turns to $(n^2-n) d^2$ instead of that of $nd^2$ in problem \eqref{Problem}. Even though there are several attempts in designing custom-developed algorithms for certain kinds of SDP \cite{wen2012block,boumal2016non,wang2013exact}, the corresponding theories as well as computational efficiency are not entirely satisfactory.

In addition to the aforementioned SDP relaxation method, one can also apply the light-weight first-order method, namely the generalized power method (GPM), to resolve problem \eqref{Problem} \cite{ling2020improved,journee2010generalized,boumal2016nonconvex,liu2017estimation}, which can be seen as a projected gradient method to the product manifold $\odn$ in the Euclidean space $\mathbb{R}^{nd\times d}$. There are fruitful theoretical results of the GPM when employed as a workhorse for problem \eqref{Problem} as well as other relevant problems \cite{boumal2016nonconvex,liu2017estimation,zhong2018near,shen2020complete,ling2020improved}. For instance, for the phase synchronization ($\mathcal{SO}(2)$) problem, the work \cite{boumal2016nonconvex} proves that under the additive noise model, the GPM with carefully designed spectral initialization, which is close enough to the ground truth, would converge to a global optimum. Further, the authors in \cite{liu2017estimation,zhong2018near} show that the convergence rate is actually linear under even certain improved noise bounds. For general group synchronization problems, a unified framework for group synchronization problems over subgroups of the orthogonal group (e.g., orthogonal group $\mathcal{O}(d)$, special orthogonal group $\mathcal{SO}(d)$, $\mathcal{Z}_m$ group, permutation group $\mathcal{P}(d)$) has been proposed in \cite{liu2020unified} showing that the estimation error of the iterates (i.e., the distance between the iterates and the ground truth) generated via GPM decreases geometrically under certain conditions on the subgroup, the measurement graph, the noise model and the initialization regardless of the convergence of the iterates. To further clarify the convergence property of the GPM left in \cite{liu2020unified} for problem \eqref{Problem}, the work \cite{ling2020improved} establishes the linear convergence rate of the GPM to a global optimum under complete measurements with the Gaussian noise strength $\sigma = O(\frac{\sqrt{n}}{\sqrt{d}(\sqrt{d}+\sqrt{\log n})})$ following a similar routine in \cite{zhong2018near}, which is comparable with the noise bound for the SDP relaxation method. However, the analyses of the linear convergence rate therein heavily rely on the iterates of the GPM rather than the landscape of problem \eqref{Problem}, and the techniques used there can be difficult when applied to analyze the general additive noise models and also the incomplete measurements case.


In this paper, we study the problem \eqref{Problem} 
and our contributions are the following three perspectives:
\begin{itemize}
\item First, we give a comprehensive characterization of problem \eqref{Problem} without the generative model on $C$, including optimality conditions and the relationship among different notions of points we are interested in, e.g., fixed points of the GPM, global maximizers and critical points, which would help us understand the problem including but not limited to the orthogonal group synchronization and also design suitable algorithms.
    \item Second, we derive the local error bound property of problem \eqref{Problem} under the generative model with incomplete measurements and general noise given by the orthogonal group synchronization problem. It is desirable to study the problem with incomplete measurements which is seldom considered before, especially for the convergence analysis of various algorithms, since in real-world applications it is costly and even impractical to obtain complete measurements \cite{singer2011angular}. On the other hand, the error bound property which is independent from algorithms is powerful for analyzing various algorithms' convergence rate behavior by choosing suitable residual functions. 

    \item  Third, by taking the residual function related to fixed points of the GPM, the linear convergence result of the GPM to the global maximizers of problem \eqref{Problem} with general noise model under incomplete measurements is given with the help of the local error bound property. It is well worth highlighting that the conditions in our theoretical convergence are pure deterministic, hence it can cover certain cases with adversarial noise. We also specialize the result to the Erd\"os-R\'enyi measurement graph $\mathcal{G}([n],p)$ and the Gaussian case with the noise strength $\sigma =O (\frac{n^{1/4}p^{1/2}}{d}) $. 
\end{itemize}
	In addition, even though there is seemingly similarity between the orthogonal group $\od$ synchronization and the special orthogonal group $\sod$ synchronization problems, we point out the differences between them with/without the generative model, and explain the reason why it is more restrictive when applying the same analytical framework to $\sod$ synchronization problem.

The remainder of this paper is organized as follows. In section 2, useful notations and preliminary results are presented. The characterizations and properties of problem \eqref{Problem} without the generative model are provided in section 3. In section 4, we show more results of problem \eqref{Problem} based on the generative model, especially the local error bound property, from which we are able to prove the linear convergence result of the GPM with a general noise model, and also specialize it to the Erd\"os-R\'enyi measurement graph with the Gaussian noise setting in section 5. We end with some conclusions and future directions in section 6.



\section{Preliminaries}

Throughout the paper, we use the standard notations. Let the Euclidean space of all $m\times n$ real matrices $\mathbb{R}^{m\times n}$ be equipped with inner product $\langle X,Y\rangle:={\rm tr}(X^\top Y)$ for any $X,Y\in\mathbb{R}^{m\times n}$ and its induced norm $\Vert\cdot\Vert_F$, i.e., the Frobenius norm (denoted as $\Vert\cdot\Vert_2$ when reduced to vectors). Let $\mathbb{D}^{n}$, $\mathbb{S}^n$ be the space of $n\times n$ diagonal matrices and symmetric matrices, respectively. Let $\Vert \cdot\Vert_{*}$ and $\Vert \cdot\Vert$ be the nuclear norm and operator norm, and let $\left\| X \right\|_{\infty}: =  \max_{i\in[n]} \left\| X_i \right\|$ for a real matrix $X\in\mathbb{R}^{nd\times d}$, where $X_i$ is its $n$-th $d\times d$ block. Denote the positive semidefinite square root of a positive semidefinite matrix $X\in\mathbb{R}^{n\times n}$ by $X^{1/2}$, and also denote $X_i\in \mathbb{R}^{nd \times d}$ (resp. $X_{ij}\in \mathbb{R}^{d \times d}$) as the $i$-th block column (resp. $ij$-th block) of the matrix $X\in\mathbb{R}^{nd\times nd}$ formulated by $n\times n$ blocks of $d\times d$ real matrices. Denote $1_n\in \mathbb{R}^{n}$ as a $n$-dimension column vector with all entries equal to one.

The constraints in problem \eqref{Problem} involve the orthogonal group $\od$ which  is an embedded submanifold of $\mathbb{R}^{d\times d}$\cite[Section 3.3.2]{absil2009optimization}. The tangent space to $\odn$ at point $G$ is denoted by $\mathrm{T}_{G} \odn$. We consider the Riemannian metric on $\odn$ that is induced from the Euclidean inner product; i.e, for any $\xi, \eta \in \mathrm{T}_{G} \odn$, we have $\langle\xi, \eta\rangle_{G}:=\langle\xi, \eta\rangle=\operatorname{tr}\left(\xi^{\top} \eta\right)$. The Euclidean gradient, the Riemannian gradient and the Riemannian Hessian of a smooth function $f$ are denoted as $\nabla f$, $\operatorname{grad} f$ and $\operatorname{Hess} f$, respectively. 


Consider an arbitrary matrix $Z\in\mathbb{R}^{m\times n}$. Denote its singular values as $\sigma(Z):=(\sigma_1(Z),\ldots,\sigma_q(Z))\in \mathbb{R}_{+}^{q}$, where $q:=\min\{m,n\}$ and $\sigma_l(Z)$ is the $l$-th largest singular value. Let ${\rm Diag}(x)\in\mathbb{R}^{m\times n}$ (resp. ${\rm Diag}(X)\in\mathbb{R}^{nd\times nd}$) be a diagonal matrix (resp. block diagonal matrix) with diagonal elements being the elements of the vector $x\in\mathbb{R}^{q}$ (resp. the matrix $X\in\mathbb{R}^{nd\times d}$). We further define
$$\Xi(Z):=\Big\{(U,V)\in \mathcal{O}(m)\times\mathcal{O}(n)  \ |\ X=U {\rm Diag}(\sigma(Z))V^\top \Big\}$$
as the set of all pairs of orthogonal matrices satisfying the singular value decomposition of $Z$, and $\Xi(Z)$ can be decomposed into 
$$\Xi(Z)_1:=\Big\{U\in \mathcal{O}(m)  \ |\ (U,V)\in \Xi(Z)\  \text{for some}\ V\in\mathcal{O}(n) \Big\},$$ 
$$\Xi(Z)_2:=\Big\{V\in \mathcal{O}(m)  \ |\ (U,V)\in \Xi(Z)\  \text{for some}\ U\in\mathcal{O}(m) \Big\}.$$

Consider a set  $\mathcal{Z}\subseteq\mathbb{R}^{m\times n}$. The distance function to $\mathcal{Z}$ and the projection onto $\mathcal{Z}$ are denoted by 
${\rm dist}(\cdot,\mathcal{\mathcal{Z}})$ and $\Pi_{\mathcal{Z}}(\cdot)$, respectively, and they are defined by

  \begin{displaymath}
{\rm dist}(X,\mathcal{Z}):=\inf \limits_{Y\in \mathcal{Z}}\Vert X-Y\Vert_F
  \end{displaymath}
and
  \begin{displaymath}
  \Pi_{\mathcal{Z}}(X):=\{\bar{X}\in \mathcal{Z} \mid \Vert X-\bar{X}\Vert_F={\rm dist}(X,\mathcal{Z})\} \quad \text{for each}\ X\in\mathbb{R}^{m\times n},
  \end{displaymath}
respectively, where we adopt the convention that ${\rm dist}(x,\emptyset)=+\infty$.   

Next, we provide some useful results concerning matrix inequalities and probability.
\begin{fact}[Lipschitz continuity of matrix square root {\cite[Corollary VII.5.6; Theorem VII.5.7]{bhatia2013matrix}, \cite[Theorem 4.2]{bhatia2010modulus}}] \label{lemma: lipmatmodu}
	Let $X, Y \in \mathbb{R}^{d \times d}$ be two arbitrary matrices, then the following inequality holds:
	\begin{align*}
		& \| (XX^\top)^{1/2} - (YY^\top)^{1/2} \|_F \leq \sqrt{2} \| X - Y\|_F.
	\end{align*}
	Moreover, if $X,Y$ are both normal matrices (i.e., $XX^\top = X^\top X$, $YY^\top = Y^\top Y$), then 
	\begin{align*}
	 \| (XX^\top)^{1/2} - (YY^\top)^{1/2} \|_F \leq \| X - Y\|_F.
	\end{align*}
\end{fact}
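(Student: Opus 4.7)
The goal is to prove two Frobenius-norm Lipschitz estimates for the map $X \mapsto (XX^\top)^{1/2}$: a general bound with constant $\sqrt{2}$ and a sharper bound with constant $1$ under normality. My plan is to establish the normal case first and then derive the general case from it via a dilation argument that turns arbitrary matrices into symmetric ones.

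\textbf{Step 1 (normal case).} If $X$ is normal, then $XX^\top = X^\top X$, so $(XX^\top)^{1/2}$ coincides with the usual absolute value $|X|$ obtained from the spectral decomposition. Writing $X = \sum_i \lambda_i P_i$ and $Y = \sum_j \mu_j Q_j$ with orthogonal spectral projectors $P_i, Q_j$, one has $|X| = \sum_i |\lambda_i| P_i$ and $|Y| = \sum_j |\mu_j| Q_j$. A direct computation shows that the family $\{P_i Q_j\}_{i,j}$ is orthogonal in the Frobenius inner product, because $\langle P_i Q_j, P_k Q_\ell\rangle = \operatorname{tr}(Q_j P_i P_k Q_\ell) = \delta_{ik}\delta_{j\ell}\operatorname{tr}(P_i Q_j)$. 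Expanding
$\|X-Y\|_F^2 = \sum_{i,j}|\lambda_i-\mu_j|^2\|P_iQ_j\|_F^2$ and $\||X|-|Y|\|_F^2 = \sum_{i,j}\bigl||\lambda_i|-|\mu_j|\bigr|^2\|P_iQ_j\|_F^2$, the scalar inequality $||\lambda|-|\mu||\le|\lambda-\mu|$ yields $\||X|-|Y|\|_F\le\|X-Y\|_F$.

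\textbf{Step 2 (general case via Jordan–Wielandt dilation).} For arbitrary $X \in \mathbb{R}^{d\times d}$, form the $2d\times 2d$ symmetric (hence normal) matrix
\begin{equation*}
T_X := \begin{pmatrix} 0 & X \\ X^\top & 0 \end{pmatrix},
\end{equation*}
and analogously $T_Y$. A direct block computation gives $T_X^2 = \operatorname{diag}(XX^\top, X^\top X)$, so $|T_X| = (T_X^2)^{1/2} = \operatorname{diag}\bigl((XX^\top)^{1/2},(X^\top X)^{1/2}\bigr)$, and similarly for $Y$. Applying Step 1 to the normal matrices $T_X, T_Y$ yields
\begin{equation*}
\|(XX^\top)^{1/2} - (YY^\top)^{1/2}\|_F^2 + \|(X^\top X)^{1/2} - (Y^\top Y)^{1/2}\|_F^2 = \||T_X| - |T_Y|\|_F^2 \leq \|T_X - T_Y\|_F^2 = 2\|X-Y\|_F^2.
\end{equation*}
Dropping the non-negative second term on the left gives the desired $\sqrt 2$-bound.

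\textbf{Main obstacle.} The only delicate point is the orthogonality of $\{P_iQ_j\}_{i,j}$ in Step 1; everything else is routine expansion and the trivial scalar inequality $||\lambda|-|\mu||\le|\lambda-\mu|$. A slicker alternative is to invoke the Hoffman–Wielandt theorem to match eigenvalues of the two normal matrices and then apply $1$-Lipschitzness of $|\cdot|$; this avoids projector bookkeeping but needs the same essential ingredient. The dilation step in Step 2 is algebraically free once the normal case is in hand, and explains precisely where the factor $\sqrt 2$ comes from — it is the cost of losing normality, i.e., of carrying along the companion term $\|(X^\top X)^{1/2}-(Y^\top Y)^{1/2}\|_F$ in the dilated identity.
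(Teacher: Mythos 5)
Your proof is correct. Note that the paper does not prove this statement at all: it is recorded as a Fact and attributed to Bhatia (Corollary VII.5.6, Theorem VII.5.7 of \emph{Matrix Analysis} and Theorem 4.2 of the paper on the modulus of continuity), where the $\sqrt{2}$ bound is the Araki--Yamagami inequality. So your argument is a self-contained substitute for the citation rather than a variant of an in-paper proof. Both halves check out: the projector computation $\langle P_iQ_j,P_kQ_\ell\rangle=\delta_{ik}\delta_{j\ell}\operatorname{tr}(P_iQ_j)$ gives an orthogonal expansion of $X-Y$ and $|X|-|Y|$, and the scalar inequality $\bigl||\lambda|-|\mu|\bigr|\le|\lambda-\mu|$ then yields the $1$-Lipschitz bound for normal matrices; the symmetric dilation $T_X=\begin{pmatrix}0 & X\\ X^\top & 0\end{pmatrix}$ with $|T_X|=\operatorname{diag}\bigl((XX^\top)^{1/2},(X^\top X)^{1/2}\bigr)$ and $\|T_X-T_Y\|_F^2=2\|X-Y\|_F^2$ cleanly produces the $\sqrt{2}$ factor, and it also explains why $\sqrt 2$ is the exact price of non-normality. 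Two small caveats: since a real normal matrix can have complex eigenvalues, the spectral projectors $P_i,Q_j$ in Step 1 live over $\mathbb{C}$, so you should say you are working with the Hermitian adjoint and the complex Frobenius inner product (the norms of the real matrices $|X|-|Y|$ and $X-Y$ are unchanged); and the aside proposing Hoffman--Wielandt as a shortcut does not actually work, because Hoffman--Wielandt bounds $\||X|-|Y|\|_F$ from below by matched eigenvalue differences, not from above --- the eigenvector/projector bookkeeping (or an equivalent ingredient) is genuinely needed, as you yourself suspected.
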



\begin{fact}[Weyl's inequality {\cite[Theorem 4.3.1]{horn2012matrix}}] \label{lemma: weyl}
Let $X, Y \in \mathbb{S}^{d \times d}$ be two arbitrary matrices. Then for each $i\in[d]$, it follows that
$$\lambda_i(X+Y)\leq \lambda_{i-j+1}(X)+\lambda_{j}(Y), \quad \text{for each}\ j=1,\ldots,i,$$
where $\lambda_i(X)$ is the i-th largest eigenvalue of $X$. In particular,
$$\lambda_{\min}(Y)\ge \lambda_{\min}(X+Y)-\Vert X\Vert.$$
\end{fact}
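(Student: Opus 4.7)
The plan is to establish Weyl's inequality via the classical Courant--Fischer min-max characterization of eigenvalues of symmetric matrices: for any $A\in\mathbb{S}^{d\times d}$,
\begin{equation*}
\lambda_i(A)=\min_{\substack{S\subseteq\mathbb{R}^d\\ \dim S=d-i+1}}\ \max_{\substack{x\in S\\ \Vert x\Vert_2=1}} x^\top A x.
\end{equation*}
This formulation is especially convenient for upper-bounding $\lambda_i(X+Y)$, since it suffices to exhibit a single subspace of the prescribed dimension on which the Rayleigh quotient of $X+Y$ can be controlled.

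For the first displayed inequality, with $j\in\{1,\ldots,i\}$ fixed, the key step is to build such a subspace out of the individual spectra of $X$ and $Y$. Let $\{u_1,\ldots,u_d\}$ be an orthonormal eigenbasis of $X$ ordered by nonincreasing eigenvalues, and $\{v_1,\ldots,v_d\}$ the analogous basis for $Y$. I would set
\begin{equation*}
S_X:=\mathrm{span}(u_{i-j+1},\ldots,u_d),\qquad S_Y:=\mathrm{span}(v_j,\ldots,v_d),
\end{equation*}
so that $\dim S_X=d-i+j$ and $\dim S_Y=d-j+1$. On unit vectors of $S_X$ one immediately has $x^\top X x\le\lambda_{i-j+1}(X)$, and on unit vectors of $S_Y$ one has $x^\top Y x\le\lambda_j(Y)$. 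A dimension count gives
\begin{equation*}
\dim(S_X\cap S_Y)\ \ge\ \dim S_X+\dim S_Y-d\ =\ d-i+1,
\end{equation*}
so $S_X\cap S_Y$ contains an admissible subspace for the min-max formula above, on which the Rayleigh quotient of $X+Y$ is at most $\lambda_{i-j+1}(X)+\lambda_j(Y)$; combining with Courant--Fischer then yields the first inequality.

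The ``in particular'' statement follows by specializing to $i=j=d$: the main inequality gives $\lambda_{\min}(X+Y)\le\lambda_1(X)+\lambda_{\min}(Y)$, and rearranging together with $\lambda_1(X)\le\max_k|\lambda_k(X)|=\Vert X\Vert$ (valid since $X$ is symmetric) delivers the bound. The argument as a whole is essentially standard; the only substantive algebraic step is the intersection-dimension inequality, which rests on $\dim(S_X+S_Y)+\dim(S_X\cap S_Y)=\dim S_X+\dim S_Y$ together with $\dim(S_X+S_Y)\le d$. I expect no serious obstacle as long as the Courant--Fischer characterization is invoked as a known fact; the only place real work could arise is a self-contained proof of that characterization, which is itself routine via either diagonalization or a Lagrange-multiplier argument.
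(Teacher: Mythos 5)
Your argument is correct: the Courant--Fischer construction with $S_X=\mathrm{span}(u_{i-j+1},\ldots,u_d)$, $S_Y=\mathrm{span}(v_j,\ldots,v_d)$ and the intersection-dimension count $\dim(S_X\cap S_Y)\ge d-i+1$ is the standard proof of Weyl's inequality, and the specialization $i=j=d$ together with $\lambda_1(X)\le\Vert X\Vert$ correctly yields the ``in particular'' bound. The paper itself gives no proof --- it quotes this as a known fact from the cited reference --- and your argument is essentially the classical textbook proof found there, so there is nothing to reconcile.
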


\begin{fact}[{\cite[Fact 1]{so2009performance}}] \label{fact: epsilonnet}
	Let $\varepsilon>0$ and $N$ be an $\varepsilon$-net of the set $\{x\in\mathbb{R}^d \mid  \Vert x\Vert_2=1\}$ (i.e., for each $p \in \{x\in\mathbb{R}^d \mid  \Vert x\Vert_2=1\}$, there exists an $p^{\prime} \in N\subset\{x\in\mathbb{R}^d \mid  \Vert x\Vert_2=1\}$ such that $\left\|p-p^{\prime}\right\|_{2} \leq \varepsilon$). Then, for any $X\in\mathbb{R}^{d\times d}$, we have 
	\begin{align*}
		\| X\| \leq (1-\varepsilon)^{-2} \sup_{u \in N, v \in N} |u^\top X v|.
	\end{align*}
\end{fact}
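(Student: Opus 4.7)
The plan is a standard two-stage $\varepsilon$-net discretization. The operator norm admits the variational formula $\| X \| = \sup_{u, v \in S^{d-1}} |u^\top X v|$ with $S^{d-1} := \{x \in \mathbb{R}^d \mid \|x\|_2 = 1\}$, so the task reduces to replacing each continuous unit sphere by the finite net $N$, each replacement costing a factor of $(1-\varepsilon)^{-1}$, for a total factor of $(1-\varepsilon)^{-2}$.

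For the first stage, I would discretize the left argument. Given any $u \in S^{d-1}$, the net property yields some $u' \in N$ with $\|u - u'\|_2 \leq \varepsilon$. Writing $Xu = Xu' + X(u - u')$, applying the triangle inequality, and using the crude bound $\|X(u-u')\|_2 \leq \varepsilon \|X\|$, I obtain
$$\|Xu\|_2 \leq \sup_{u' \in N} \|Xu'\|_2 + \varepsilon \|X\|.$$
Taking the supremum over $u \in S^{d-1}$ on the left and rearranging (using $\|X\| < \infty$, which is automatic in finite dimensions) produces $\|X\| \leq (1-\varepsilon)^{-1} \sup_{u' \in N} \|Xu'\|_2$.

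For the second stage, I would discretize the right argument inside the inner norm. For each fixed $u' \in N$, one has $\|Xu'\|_2 = \sup_{v \in S^{d-1}} |v^\top X u'|$. The identical argument applied to $v$, picking $v' \in N$ with $\|v - v'\|_2 \leq \varepsilon$ and using $|(v-v')^\top X u'| \leq \varepsilon \|X u'\|_2$, gives $\|Xu'\|_2 \leq (1-\varepsilon)^{-1} \sup_{v' \in N} |v'^\top X u'|$. Chaining this with the bound from the first stage yields
$$\|X\| \leq (1-\varepsilon)^{-2} \sup_{u', v' \in N} |v'^\top X u'|,$$
which is the claim after relabeling.

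No step is delicate; the only thing to watch is the direction of the rearrangement, which requires $\|X\|$ (respectively $\|Xu'\|_2$) to be finite, guaranteed here by finite dimensionality. Since the statement is imported verbatim from \cite{so2009performance}, in practice I would simply cite it rather than reproduce the argument, and I do not anticipate any obstacle.
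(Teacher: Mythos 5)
Your two-stage net argument is correct and is the standard proof of this bound; the paper itself offers no proof, importing the fact verbatim from the cited reference \cite{so2009performance}, so citing it (as you suggest) or reproducing your argument are both fine. The only implicit point is that the rearrangement steps require $\varepsilon<1$ so that $1-\varepsilon>0$, but this restriction is equally implicit in the statement itself (for $\varepsilon\ge 1$ the bound is vacuous or undefined) and is satisfied where the fact is used in the paper ($\varepsilon=1/2$).
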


\begin{lemma}[{\cite[Corollary 3.11]{bandeira2016sharp}}] \label{lemma: aop}
	Let $X$ be the $n \times n$ symmetric random matrix with entries $i\ge j$ satisfying $X_{ij} =b_{ij}\cdot g_{ij}$, where $g_{ij}$ are independent
	standard Gaussian random variables and $b_{ij}$ are given scalars. Moreover, we assume $\max\limits_{i,j}|b_{ij}|\leq 1$. Then 
	\begin{align*}
		P\left[\|X \| \geq (1+\varepsilon)\cdot\left(\max_{i \in [n]}\sqrt{\sum_{j\in [n]}b_{ij}^2}+ \max_{j\in [n]}\sqrt{\sum_{i \in [n]}b_{ij}^2} + \frac{5\sqrt{\log n}}{\sqrt{\log(1+\varepsilon)}} \right) + t \right] \leq \exp(-t^2/2),
	\end{align*}
for any $0<\varepsilon \leq 1/2$ and $t \geq 0$.
\end{lemma}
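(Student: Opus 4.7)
The statement is Corollary 3.11 of Bandeira and van Handel \cite{bandeira2016sharp}, which I will sketch rather than reprove in full. The plan is to decompose $\|X\|$ as $\mathbb{E}\|X\| + (\|X\| - \mathbb{E}\|X\|)$, derive a sharp upper bound on the expectation, and apply Gaussian concentration to control the deviation from the mean.

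The deviation term I would handle via the Borell--Tsirelson--Ibragimov--Sudakov (Borell--TIS) inequality applied to the map $(g_{ij})_{i \leq j} \mapsto \|X(g)\|$ of the independent standard Gaussians. Perturbing a single $g_{ij}$ by $\delta$ changes $X$ by a rank-at-most-two matrix of operator norm $|b_{ij}|\,|\delta| \leq |\delta|$, so this map is Lipschitz with a constant controlled by $\max_{i,j}|b_{ij}| \leq 1$. Borell--TIS then yields a Gaussian tail of the form $P[\|X\| - \mathbb{E}\|X\| \geq t] \leq \exp(-t^2/2)$, which accounts for the additive $t$ appearing in the stated inequality.

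The substantive work is the expectation bound
\begin{equation*}
\mathbb{E}\|X\| \leq (1+\varepsilon)\left( \max_i \sqrt{\sum_j b_{ij}^2} + \max_j \sqrt{\sum_i b_{ij}^2} + \frac{5\sqrt{\log n}}{\sqrt{\log(1+\varepsilon)}}\right).
\end{equation*}
My approach would be the moment method: use $\|X\|^{2p} \leq \mathrm{tr}\,X^{2p}$, expand $\mathbb{E}\,\mathrm{tr}\,X^{2p}$ as a sum over closed walks of length $2p$ on $[n]$ weighted by products of the variances $b_{ij}^2$, group walks by combinatorial type in the F\"uredi--Koml\'os style, and then take $(\cdot)^{1/2p}$ with $2p \asymp \log n / \log(1+\varepsilon)$ so as to balance the dominant walk contribution against the logarithmic correction. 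The main obstacle is recovering the sharp leading constants $\max_i \sqrt{\sum_j b_{ij}^2} + \max_j \sqrt{\sum_i b_{ij}^2}$ rather than a looser bound such as $2\sqrt{\sum_{i,j}b_{ij}^2}$ that a naive moment calculation would produce; Bandeira and van Handel achieve this via a delicate interpolation between $X$ and a Gaussian free-probability model together with a refined walk enumeration. Since that part of the argument is not specific to our setting, for the present application it is cleanest to invoke their bound as a black box.
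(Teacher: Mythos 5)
The paper offers no proof of this lemma at all---it is quoted verbatim as Corollary~3.11 of Bandeira and van Handel---so your decision to invoke their bound as a black box is exactly the paper's own route, and your sketch of how that result is obtained (a moment-method bound on $\mathbb{E}\|X\|$ with $2p \asymp \log n/\log(1+\varepsilon)$, plus Gaussian concentration for the deviation term) faithfully reflects the source. The only imprecision is in the concentration step: perturbing one entry changes $X$ by a rank-$\le 2$ matrix of norm $|b_{ij}||\delta|$, but summing over coordinates this gives Euclidean Lipschitz constant $\sqrt{2}\max_{i,j}|b_{ij}|$ rather than $1$, a detail that only affects the exact constant in the exponent and is handled in the cited reference.
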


 
 
 
 





\section{Characterizations of Problem \eqref{Problem}}

In this section, we study the properties of problem \eqref{Problem} and all conclusions hold for general symmetric matrix $C$ (without loss of generality, $C_{ii}\succeq\bm{0}$ for all $i\in[n]$) without the generative model given by \eqref{eq: gsmodel3}.
\subsection{Basic facts and optimality conditions}


We start with some basic yet useful results about orthogonal group as well as optimality conditions of problem \eqref{Problem}. First, we denote a new distance function based on the quotient manifold which is useful in the analysis of the following parts as ${\rm d}(\cdot, \cdot)$, and it is defined by
\begin{equation*}
	{\rm d}(X,Y) :=  \min\limits_{g\in \od} \Vert X-Yg\Vert_F,\quad \text{for each}\  X,Y\in\mathbb{R}^{nd\times d}.
\end{equation*}
Moreover, if $X,Y \in \odn$, then 
$${\rm d}^2(X,Y) = 2(nd - \max_{g\in \od } \langle Y^\top X,g \rangle) = 2(nd - \| Y^\top X\|_*).$$
The projection 
onto the orthogonal group $\od$, denoted as $\Pi_{\od}(\cdot)$, has a closed-form solution that for each $Z\in \mathbb{R}^{d\times d}$ with singular value decomposition $Z = U_Z\Sigma_ZV_Z^\top$, 
\begin{align} 
	& \Pi_{\mathcal{O}(d)}(Z)=\mathop{\arg\min}_{X\in \mathcal{O}(d)} \Vert X-Z\Vert_F= U_{Z} V_{Z}^\top.  \label{eq: orthoprocru}
\end{align}
Note that the solution to \eqref{eq: orthoprocru} need not be unique as the polar decomposition of $Z$ may not be unique.
Moreover, we define $ \Pi_{\odn}(G) := [\Pi_{\od}(G_1);\ldots;\Pi_{\od}(G_n)]$, where $G=[G_1;\ldots;G_n]\in\mathbb{R}^{nd\times d}$. Define a linear operator symblockdiag: $\mathbb{R}^{nd\times nd}\rightarrow \mathbb{S}^{nd}$ as
\begin{equation}
  {\rm symblockdiag}(X)_{ij}=\left\{
  \begin{aligned}
  &\frac{X_{ii}+X_{ii}^\top}{2}, & \ &\textrm{ if }\ i=j,\\
  & \bm{0}, & \ &\textrm{ otherwise},
  \end{aligned}
  \right.
  \end{equation}
which symmetrizes diagonal blocks of $X$ and sets all
other blocks to be $\bm{0}$. Based on this definition, it follows that
\begin{align*}
\odn &=\{G\in\mathbb{R}^{nd\times d} \mid G=[G_1;\ldots;G_n],\ G_i G_i^\top = G_i^\top G_i=I_d,\  i\in [n]\} \\
&=\{G\in\mathbb{R}^{nd\times d} \mid \text{symblockdiag}(GG^\top)=I_{nd}\}.
\end{align*}
The tangent space of $\odn $ \cite[Example 3.5.3]{absil2009optimization} at each $G$ is 
\begin{align*}
\mathrm{T}_G \odn
&=\{H = [H_1;\dots;H_n]\in \mathbb{R}^{nd \times d} \mid H_i = E_iG_i,~ E_i = -E_i^\top,~ i\in [n]\}\\
&=\{H\in\mathbb{R}^{nd\times d}\mid  \text{symblockdiag}(HG^\top )=\bm{0}\}.
\end{align*}
That is, if $H \in \mathbb{R}^{nd \times d}$ is in the tangent space $\mathrm{T}_G \odn$, $H_i G_i^\top$ is a skew-symmetric matrix for any $i \in [n]$. The orthogonal projection of $ X \in \mathbb{R}^{nd\times d}$ to the tangent space to $\odn$ at $G$ is given by
$$\Pi_{\mathrm{T}_G \odn}(X)=X-\text{symblockdiag}(XG^\top)G.$$
Let $S(G):=\text{symblockdiag}(C G G^\top)-C $. Note that by our choice of the Riemannian metric, we have the Riemannian gradient of $f$ in problem \eqref{Problem} at $G$ as the orthogonal projection of $\nabla f(G)$ onto the tangent space, i.e.,
\begin{equation*}
	\text{grad} f(G)=\Pi_{\mathrm{T}_{G}\odn}(\nabla f(G))=2\left(C-\text{symblockdiag}(C G G^\top )\right) G=-2S(G)G.
\end{equation*}
More specifically, for each $i\in [n]$, we have $[\text{grad} f(G)]_i =C_{i}^\top G-G_i G^\top C_{i} G_i$.
Furthermore, the Riemannian Hessian of $f$ at $G$ along each direction $H\in \mathrm{T}_{G} \odn$ is given by
\begin{align}
\text{Hess} f(G)[H]
&=\Pi_{\mathrm{T}_{G}\odn} \left(\mathrm{D}\operatorname{grad} f(G) [H] \right)\notag\\
&=-2\Pi_{\mathrm{T}_{G}\odn}  \left(\text{symblockdiag} (C(GH^\top+HG^\top))G+S(G)H\right)\notag\\
&=-2\Pi_{\mathrm{T}_{G}\odn} \left(S(G)H\right),\notag
\end{align}
where $\mathrm{D} \operatorname{grad} f(G)[H]$ is the classical directional derivative. With the help of the Riemannian gradient and Hessian, we have the following definitions about the first- and second-order critical point of the problem \eqref{Problem}.
\begin{defi}\label{def-crit}
Consider the problem \eqref{Problem}. A point $G\in \odn$ is said to be a\\ 
{\rm (i)} first-order critical point if $\text{grad} f(G)=\bm{0}$, i.e., $S(G)G=\bm{0}$;\\
{\rm (ii)} second-order critical point if $\text{grad} f(G)=\bm{0}$, and $\langle H,\text{Hess} f(G)H\rangle \leq 0 $ for all $H\in \mathrm{T}_{G} \odn$, i.e.,
\begin{equation*}
	S(G)G=\bm{0},\quad \langle H,  S(G)H\rangle\ge0.
\end{equation*}
\end{defi}

\begin{fact}[{\cite[Lemma 3.2]{boumal2015riemannian}}]\label{boumal-fact-2015}
	All the local maximizers of problem \eqref{Problem} are second-order critical points of problem \eqref{Problem}.
	\end{fact}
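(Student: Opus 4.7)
The plan is to run the standard Riemannian second-order argument on the embedded submanifold $\odn$, specialized to the formulas for $\operatorname{grad} f$ and $\operatorname{Hess} f$ that were computed just above the statement. Let $G^\star \in \odn$ be a local maximizer of $f$. Given any tangent vector $H \in \mathrm{T}_{G^\star}\odn$, I would pick a smooth curve $\gamma:(-\varepsilon,\varepsilon)\to \odn$ with $\gamma(0)=G^\star$ and $\gamma'(0)=H$; the cleanest choice is a geodesic for the Riemannian metric induced from the Euclidean inner product (existence follows from $\odn$ being a closed embedded submanifold of $\mathbb{R}^{nd\times d}$), since then the second derivative of $f\circ\gamma$ at $0$ coincides exactly with $\langle H,\operatorname{Hess} f(G^\star)[H]\rangle$. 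Setting $h(t):=f(\gamma(t))$, local maximality of $G^\star$ forces $h$ to have a local maximum at $t=0$, so $h'(0)=0$ and $h''(0)\le 0$.

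Next I would translate these two scalar inequalities into the statement of Definition~\ref{def-crit}. By the defining property of the Riemannian gradient, $h'(0)=\langle \operatorname{grad} f(G^\star),H\rangle$; since $H\in \mathrm{T}_{G^\star}\odn$ is arbitrary and $\operatorname{grad} f(G^\star)\in \mathrm{T}_{G^\star}\odn$, the vanishing of $h'(0)$ for every $H$ yields $\operatorname{grad} f(G^\star)=\bm 0$, and substituting the formula $\operatorname{grad} f(G^\star)=-2S(G^\star)G^\star$ gives the first-order condition $S(G^\star)G^\star=\bm 0$. For the second derivative, along a geodesic one has $h''(0)=\langle H,\operatorname{Hess} f(G^\star)[H]\rangle$; plugging in $\operatorname{Hess} f(G^\star)[H]=-2\Pi_{\mathrm{T}_{G^\star}\odn}(S(G^\star)H)$ and using that $H$ already lies in $\mathrm{T}_{G^\star}\odn$ so that the projection can be dropped inside the inner product, the inequality $h''(0)\le 0$ becomes $-2\langle H,S(G^\star)H\rangle\le 0$, i.e.\ $\langle H,S(G^\star)H\rangle \ge 0$ for all $H\in \mathrm{T}_{G^\star}\odn$, which is exactly the second-order condition in Definition~\ref{def-crit}(ii).

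The only real subtlety, and what I would flag as the main obstacle, is making sure the second derivative along the chosen curve truly equals the Riemannian Hessian bilinear form. For a generic retraction $R_{G^\star}(tH)$ on $\odn$ (e.g.\ the blockwise polar retraction built from $\Pi_{\od}$), one has an identity of the form $\frac{d^2}{dt^2}\big|_{t=0} f(R_{G^\star}(tH)) = \langle H,\operatorname{Hess} f(G^\star)[H]\rangle + \langle \operatorname{grad} f(G^\star),\ddot{R}_{G^\star}(0)\rangle$, and the extra acceleration term need not vanish in general. This is handled in two equivalent ways: either restrict to geodesics, where the acceleration is normal to the manifold and hence orthogonal to $\operatorname{grad} f(G^\star)\in \mathrm{T}_{G^\star}\odn$, or first establish the first-order condition $\operatorname{grad} f(G^\star)=\bm 0$ (using any retraction and only $h'(0)=0$) and then note that, at a first-order critical point, the extra term disappears regardless of the retraction. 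Either route produces the clean inequality $\langle H,\operatorname{Hess} f(G^\star)[H]\rangle\le 0$ and completes the proof.
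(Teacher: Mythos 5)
Your proof is correct. Note that the paper itself does not prove this statement at all: it is stated as a Fact quoted from \cite[Lemma 3.2]{boumal2015riemannian}, so there is no in-paper argument to compare against. What you wrote is exactly the standard Riemannian second-order necessary-conditions argument that the cited lemma encapsulates, specialized to the formulas $\operatorname{grad} f(G)=-2S(G)G$ and $\operatorname{Hess} f(G)[H]=-2\Pi_{\mathrm{T}_G\odn}(S(G)H)$, and your treatment of the only delicate point---that the second derivative of the pullback along a curve equals the Hessian quadratic form either along geodesics or along any retraction once $\operatorname{grad} f(G^\star)=\bm{0}$ is established---is accurate.
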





\subsection{Fixed points, global maximizers and critical points}
For characterizing problem \eqref{Problem}, we introduce fixed points of the projected gradient ascent method. Let $\alpha\ge0$, $G\in\odn$ and define the function $f_{G,\alpha}:\odn\rightarrow \mathbb{R}$ by  
\begin{align*}
f_{G,\alpha}(G')
:= -f(G)-\langle \nabla f(G),G'-G\rangle+\alpha\Vert G'-G\Vert_F^2\quad \text{for each}\ G'\in \odn.
\end{align*}
Then, with $\alpha\ge0$ we define the multi-valued mapping $\mathcal{T}_\alpha: \odn \rightrightarrows \odn$ for each $G\in \odn$ by
\begin{align}\label{def-T0}
\mathcal{T}_\alpha(G):=\mathop{\arg\min}_{G' \in \odn}f_{G,\alpha}(G').
\end{align}
In particular, when $\alpha>0$ it follows that
\begin{align}\label{def-T1}
\mathcal{T}_\alpha(G)=\mathop{\arg\min}\limits_{G'\in \odn }\left\{\left\| G'-\left(G+\frac{1}{2\alpha}\nabla f(G)\right)\right\|_F^2 \right\}
=\Pi_{\odn}(\tilde{C}G),
\end{align}
where $\tilde{C}:=C+\alpha I_{nd}$, and when $\alpha=0$ it reduces to
\begin{align} \label{def-T2}
\mathcal{T}_0(G)=\mathop{\arg\min}\limits_{G'\in \odn}\left\{\Vert G'-\nabla f(G)\Vert_F^2 \right\}=\Pi_{\odn}(CG).
\end{align}
Then, we can unify the above two cases with
\begin{align}\label{def-T}
	\mathcal{T}_\alpha(G)= \Pi_{\odn}(\tilde{C}G), \quad \alpha \geq 0.
\end{align}

In what follows, we investigate the fixed points (FPs) of $\mathcal{T}_\alpha$ (i.e., $G\in  \mathcal{T}_\alpha(G)$ for any $G\in\odn$), global maximizers (GMs), first-order critical points (FOCPs) and second-order critical points (SOCPs) of problem \eqref{Problem},  with the following relationships:
 \begin{center}
    \begin{tikzpicture}[scale=0.8]
      \node[rectangle,
      minimum width =100pt ,
      minimum height =20pt ,draw=black] (1) at(-6,4){GMs of \eqref{Problem}};
      \node[rectangle,
      minimum width =80pt ,
      minimum height =20pt ,draw=black] (2) at(-6,1.5){$\bigcap_{\alpha\ge0}\ \left\{\text{FPs of $\mathcal{T}_\alpha$}\right\}$};
	  \node[rectangle,
      minimum width =20pt ,
      minimum height =20pt ,draw=black] (3) at(1,4){SOCPs of \eqref{Problem}}; 
	  \node[rectangle,
      minimum width =20pt ,
      minimum height =20pt ,draw=black] (4) at(1,1.5){$\bigcap_{\alpha\ge\alpha'}\ \left\{\text{FPs of $\mathcal{T}_\alpha$}\right\}$};
	  \node[rectangle,
      minimum width =20pt ,
      minimum height =20pt ,draw=black] (5) at(7,1.5){$\bigcup_{\alpha\ge0}\ \left\{\text{FPs of $\mathcal{T}_\alpha$}\right\}$};      
	  \node[rectangle,
      minimum width =20pt ,
      minimum height =20pt ,draw=black] (6) at(7,4){FOCPs of \eqref{Problem}};
      \draw[double, ->](1) --(3) node[midway, sloped,above] {Fact \ref{boumal-fact-2015}} ;      
	  \draw[double, ->] (1) --(2) node[midway, right] {Lemma \ref{prop: globalfix}};
      \draw[double, ->] (3) --(4) node[midway, right] {Lemma \ref{lemma: socpsum} } ;
      \draw[double, ->] (5) --(6) node[midway, right] {Lemma \ref{lemma-equi2}}; 
      \draw[double, ->] (2) --(4);
	  \draw[double, ->] (3) --(6);
	  \draw[double, ->] (4) --(5);
      \end{tikzpicture}
    \end{center}
where $\alpha'\ge0$ is sufficiently large to guarantee that for all $i\in[n]$, $C_i^\top GG_i^\top + \alpha' I_d\succeq\bm{0}$ with $G$ being a second-order critical point of problem \eqref{Problem}.

\begin{lemma}[Characterization of fixed points]
	\label{lemma-equi}
Let $\alpha\ge0$ and $G\in\odn$. Then the following assertions are equivalent:\\
{\rm (a)} $G$ is a fixed point of $ \mathcal{T}_\alpha$;\\
{\rm (b)} For each $i\in [n]$, ${\tilde{C}_i^\top} G G_i^\top=U_{{\tilde{C}_i^\top}  G}\Sigma_{{\tilde{C}_i^\top} G}U^\top_{{\tilde{C}_i^\top} G}$, where $U_{{\tilde{C}_i^\top} G}\in \Xi({\tilde{C}_i^\top} G)_1$;\\
{\rm (c)} For each $i \in [n]$, ${\rm tr}({\tilde{C}_i^\top} G G_i^\top)=\Vert {\tilde{C}_i^\top} G\Vert_*$;\\
{\rm (d)} ${\rm tr}(G^\top \tilde{C} G)=\sum\limits_{i=1}^n \Vert {\tilde{C}_i^\top} G\Vert_*$.
\end{lemma}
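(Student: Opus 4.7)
The plan is to establish the cycle (a)$\Leftrightarrow$(c)$\Leftrightarrow$(b) and then (c)$\Leftrightarrow$(d), with the entire argument driven by the orthogonal Procrustes identity: for any $Y\in\mathbb{R}^{d\times d}$ with SVD $Y=U\Sigma V^\top$,
\begin{equation*}
\max_{X\in\od}\operatorname{tr}(X^\top Y)=\|Y\|_*,
\end{equation*}
the maximum being attained precisely when $X=UV^\top$ for some pair $(U,V)\in\Xi(Y)$. This is already implicit in the closed-form projection \eqref{eq: orthoprocru}, and I would simply recall it up front.

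For (a)$\Leftrightarrow$(c), I would first unfold the definition of $\mathcal{T}_\alpha$. Since $\tilde C$ is symmetric, the $i$-th block of $\tilde C G$ equals $\tilde C_i^\top G$, and the projection onto $\odn$ acts blockwise, so $G\in\mathcal{T}_\alpha(G)$ is equivalent to $G_i\in\Pi_{\od}(\tilde C_i^\top G)$ for every $i\in[n]$. By the Procrustes identity and the cyclic property of the trace, $G_i\in\Pi_{\od}(\tilde C_i^\top G)$ is in turn equivalent to $\operatorname{tr}(\tilde C_i^\top G G_i^\top)=\|\tilde C_i^\top G\|_*$, which is exactly (c). Next, (c)$\Leftrightarrow$(d) is immediate: writing $\operatorname{tr}(G^\top\tilde C G)=\sum_{i=1}^n\operatorname{tr}(\tilde C_i^\top G G_i^\top)$ and observing that each summand is bounded above by $\|\tilde C_i^\top G\|_*$ (again by Procrustes, since $G_i\in\od$), the global equality in (d) forces termwise equality, giving (c); the converse direction is trivial.

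For (b)$\Leftrightarrow$(c), I would argue directly from the SVD. If (b) holds and $\tilde C_i^\top G=U\Sigma V^\top$ with $U\in\Xi(\tilde C_i^\top G)_1$, then $\operatorname{tr}(\tilde C_i^\top G G_i^\top)=\operatorname{tr}(U\Sigma U^\top)=\operatorname{tr}(\Sigma)=\|\tilde C_i^\top G\|_*$, which is (c). Conversely, if (c) holds, the equality case of the Procrustes identity forces $G_i=UV^\top$ for some admissible pair in $\Xi(\tilde C_i^\top G)$, and then
\begin{equation*}
\tilde C_i^\top G\, G_i^\top=(U\Sigma V^\top)(V U^\top)=U\Sigma U^\top,
\end{equation*}
which is (b).

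The only delicate point is the non-uniqueness of the SVD when $\tilde C_i^\top G$ has repeated or vanishing singular values: the same $G_i$ may force a particular choice of $V$, and the matching left factor $U$ has to be chosen accordingly. This is precisely why (b) is phrased with the quantifier $U_{\tilde C_i^\top G}\in\Xi(\tilde C_i^\top G)_1$ rather than for every admissible $U$, and I would flag this nuance when passing from (c) to (b) so that the choice of $U$ is made compatible with the $G_i$ supplied by the Procrustes equality case. Beyond this bookkeeping, the proof is an essentially mechanical application of \eqref{eq: orthoprocru} and the Procrustes identity, with no serious obstacle.
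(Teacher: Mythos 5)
Your outline runs on the same engine as the paper's proof --- the duality $\langle X,Y\rangle\le\|X\|\,\|Y\|_*$ and the Procrustes projection \eqref{eq: orthoprocru} --- and the steps (a)$\Leftrightarrow$(c), (c)$\Leftrightarrow$(d) and (b)$\Rightarrow$(c) are indeed mechanical. The issue is that the one genuinely nontrivial ingredient is invoked rather than proved: in (c)$\Rightarrow$(b) you appeal to the ``equality case of the Procrustes identity,'' i.e.\ that an orthogonal $G_i$ with ${\rm tr}(\tilde{C}_i^\top G G_i^\top)=\|\tilde{C}_i^\top G\|_*$ must equal $UV^\top$ for some $(U,V)\in\Xi(\tilde{C}_i^\top G)$. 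This is not ``implicit in \eqref{eq: orthoprocru}'': that display exhibits maximizers (orthogonal polar factors) but does not by itself say that \emph{every} equality case arises this way, and when $\tilde{C}_i^\top G$ is rank-deficient or has repeated singular values this requires an argument. That argument is precisely the content of the paper's (d)$\Rightarrow$(a) step: from termwise trace equality one writes $\langle\Sigma_{\tilde{C}_i^\top G}^{++},\,U^{1\top}G_iV^1-I_r\rangle=0$ on the positive-singular-value block, uses $(U^{1\top}G_iV^1)_{jj}\le 1$ to conclude $G_iV^1=U^1$, and then completes $U^1,V^1$ to a pair $(U,V)\in\Xi(\tilde{C}_i^\top G)$ with $G_i=UV^\top$. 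If you are allowed to cite the classical characterization of the orthogonal Procrustes solution set, your proof is complete and amounts to a reorganization of the paper's cycle (a)$\Rightarrow$(b)$\Rightarrow$(c)$\Rightarrow$(d)$\Rightarrow$(a) into pairwise equivalences; in a self-contained write-up you must include the short argument above, since it is the crux of the lemma.

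A minor correction: the caveat you attach to the statement of (b) is unnecessary. The matrix $U_{\tilde{C}_i^\top G}\Sigma_{\tilde{C}_i^\top G}U_{\tilde{C}_i^\top G}^\top$ is the positive semidefinite square root of $(\tilde{C}_i^\top G)(\tilde{C}_i^\top G)^\top$ and is therefore the same for every $U_{\tilde{C}_i^\top G}\in\Xi(\tilde{C}_i^\top G)_1$ (as the paper notes after \eqref{eq: defineDG}), so no matching of the left factor to the $V$ ``forced by $G_i$'' is needed; (b) holds for every admissible choice once it holds for one.
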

\begin{proof}
Note that ${\rm (a)}\Rightarrow{\rm (b)}\Rightarrow{\rm (c)}\Rightarrow{\rm (d)}$ is evident (by definition), then it suffices to show ${\rm (d)}\Rightarrow {\rm (a)}$.
From ${\rm (d)}$ we know that ${\rm tr}(G^\top \tilde{C} G)=\sum\limits_{i=1}^n \Vert {\tilde{C}_i^\top} G\Vert_*$, and this together with 
$$\left\langle {\tilde{C}_i^\top} G,G_i\right\rangle\leq \Vert {\tilde{C}_i^\top} G\Vert_* \Vert G_i\Vert = \Vert {\tilde{C}_i^\top} G\Vert_*$$
(by the fact that the nuclear norm is the dual norm of the operator norm) implies that $\langle {\tilde{C}_i^\top} G,G_i\rangle= \Vert {\tilde{C}_i^\top} G\Vert_*$ for each $i\in [n]$. Then we have
\begin{align}\label{C-fix-key}
	\left\langle\Sigma_{{\tilde{C}_i^\top}G},U_{{\tilde{C}_i^\top}G}^\top G_i V_{{\tilde{C}_i^\top}G}-I_d\right\rangle=\left\langle\Sigma_{{\tilde{C}_i^\top}G}^{++},U_{{\tilde{C}_i^\top}G}^{1\top} G_i V_{{\tilde{C}_i^\top}G}^1-I_r\right\rangle= 0, \quad \text{for each}\ i\in[n],
\end{align} 
where $r={\rm rank}({\tilde{C}_i^\top}G)$, $\Sigma_{{\tilde{C}_i^\top}G}^{++}\in\mathbb{D}_{++}^{r}$ contains all positive singular values and $U_{{\tilde{C}_i^\top}G}^{1}\in\mathbb{R}^{d\times r}$ (resp. $V_{{\tilde{C}_i^\top}G}^{1}\in\mathbb{R}^{d\times r}$) is the matrix of first $r$ columns of $U_{{\tilde{C}_i^\top}G}$ (resp. $V_{{\tilde{C}_i^\top}G}$).
Thus, from \eqref{C-fix-key} and $\left( U_{{\tilde{C}_i^\top}G}^{1\top} G_i V_{{\tilde{C}_i^\top}G}^1\right)_{jj}\leq 1$ for any $j\in[r]$, one has that $G_i V_{{\tilde{C}_i^\top}G}^{1} =U_{{\tilde{C}_i^\top}G}^1$, which together with $G_i\in\mathcal{O}(d)$ implies that there exist $U=[U_{{\tilde{C}_i^\top}G}^1\ U^0]\in\mathcal{O}(d)$ and $V=[V_{{\tilde{C}_i^\top}G}^1\ V^0]\in\mathcal{O}(d)$ such that $G_i=UV^\top$. Since it can be directly verified that $(U,V)\in \Xi({\tilde{C}_i^\top}G)$, we know that $G_i\in\mathcal{T}_\alpha(G)_i$ for each $i\in[n]$, and consequently $G\in \mathcal{T}_\alpha(G)$. The proof is complete.
\end{proof}

By considering the optimality conditions of problem \eqref{Problem}, 
it is important to investigate the relationships between the fixed points with the first-order and second-order critical points, even with the global maximizers of problem \eqref{Problem}. 

\begin{lemma}[Relationship between fixed points and critical points]
\label{lemma-equi2} 
Let $\alpha\ge0$ and $G \in \odn$. Then the following statements hold:\\
{\rm (a)} $G$ is a first-order critical point of problem \eqref{Problem} if and only if ${\tilde{C}_i^\top} G G_i^\top\in \mathbb{S}^d$.\\ 
{\rm (b)} If $G$ is a fixed point of $\mathcal{T}_\alpha$, then $G$ is a first-order critical point of problem \eqref{Problem}.\\ 
{\rm (c)} If $G$ is a first-order critical point of problem \eqref{Problem} with ${\tilde{C}_i^\top} GG_i^\top \succeq \bm{0}$ for each $i \in [n]$, then $G$ is a fixed point of $\mathcal{T}_\alpha$.
\end{lemma}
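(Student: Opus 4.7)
My plan is to compute the Riemannian gradient block by block, obtain (a) directly from that computation, and then read off (b) and (c) from the characterizations of fixed points already supplied by Lemma~\ref{lemma-equi}. A small but useful preliminary observation is that because $\tilde{C} = C + \alpha I_{nd}$, one has $\tilde{C}_i^\top G G_i^\top = C_i^\top G G_i^\top + \alpha I_d$ for every $i\in[n]$, so symmetry and positive semidefiniteness assertions transfer freely between $C$ and $\tilde{C}$.

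For part (a), I would begin from the explicit formula $[\operatorname{grad} f(G)]_i = C_i^\top G - G_i G^\top C_i G_i$ recorded just before Definition~\ref{def-crit}. Right-multiplying by the invertible matrix $G_i^\top$ and using $G_i G_i^\top = I_d$ gives
\[
	[\operatorname{grad} f(G)]_i\, G_i^\top \;=\; C_i^\top G G_i^\top - G_i G^\top C_i \;=\; \tilde{C}_i^\top G G_i^\top - \bigl(\tilde{C}_i^\top G G_i^\top\bigr)^\top,
\]
so $\operatorname{grad} f(G) = \bm{0}$ is equivalent to $\tilde{C}_i^\top G G_i^\top$ being symmetric for every $i\in[n]$, which is (a). Part (b) is then immediate from Lemma~\ref{lemma-equi}(b): if $G$ is a fixed point of $\mathcal{T}_\alpha$, then each $\tilde{C}_i^\top G G_i^\top = U_{\tilde{C}_i^\top G} \Sigma_{\tilde{C}_i^\top G} U_{\tilde{C}_i^\top G}^\top$ is manifestly symmetric (in fact PSD), and (a) applies.

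The substance of the lemma lies in (c), where the ``symmetric and PSD'' hypothesis must be upgraded to the fixed-point property. I plan to verify the scalar characterization in Lemma~\ref{lemma-equi}(c), namely ${\rm tr}(\tilde{C}_i^\top G G_i^\top) = \|\tilde{C}_i^\top G\|_*$ for every $i$. Setting $M_i := \tilde{C}_i^\top G$, the hypothesis says $M_i G_i^\top$ is symmetric PSD. Using $G_i^\top G_i = I_d$ together with that symmetry,
\[
	M_i M_i^\top \;=\; (M_i G_i^\top)(G_i M_i^\top) \;=\; (M_i G_i^\top)(M_i G_i^\top)^\top \;=\; (M_i G_i^\top)^2,
\]
and since $M_i G_i^\top \succeq \bm{0}$ has nonnegative eigenvalues, $\sigma_j(M_i) = \sqrt{\lambda_j((M_i G_i^\top)^2)} = \lambda_j(M_i G_i^\top)$ for each $j$. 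Summing over $j$ yields $\|M_i\|_* = {\rm tr}(M_i G_i^\top)$, and invoking Lemma~\ref{lemma-equi}(c) blockwise then concludes (c).

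The main obstacle I anticipate is the identity $M_i M_i^\top = (M_i G_i^\top)^2$ used in (c); once it is in hand, the rest is bookkeeping with the gradient formula and the previously established characterizations of fixed points. A subtle point worth flagging is that the symmetry produced by (a) is blockwise rather than aggregate, which is precisely what allows the blockwise application of Lemma~\ref{lemma-equi}(c) needed in (c).
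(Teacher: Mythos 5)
Your proposal is correct and follows essentially the same route as the paper: part (a) via the blockwise gradient identity (equivalently $S(G)G=\bm{0}$), part (b) directly from Lemma \ref{lemma-equi}(b), and part (c) via the identity $\tilde{C}_i^\top G\,(\tilde{C}_i^\top G)^\top=(\tilde{C}_i^\top G G_i^\top)^2$ so that the nuclear norm equals ${\rm tr}(\tilde{C}_i^\top G G_i^\top)$ under the PSD hypothesis, then invoking Lemma \ref{lemma-equi}(c). No gaps.
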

\begin{proof}
From Definition \ref{def-crit}(a) we know that $S(G)G=\bm{0}$ if $G$ is a first-order critical point of problem \eqref{Problem}. Then the statement (a) follows from the definition of $S(G)$:
	\begin{align*}
		(S(G)G)_i=\bm{0} \iff G_i G^\top C_iG_i = C_i^\top G \iff {C_i^\top G}G_i^\top = G_iG^\top C_i \iff {\tilde{C}_i^\top G}G_i^\top = G_iG^\top \tilde{C}_i,
	\end{align*}
where the last equivalence is due to the fact that $ {C_i^\top G}G_i^\top=(\tilde{C}-\alpha I_{nd})_i^\top G G_i^\top={\tilde{C}_i^\top} G G_i^\top-\alpha I_d$.
For statement (b), let $G\in \mathcal{T}_\alpha(G)$ be a fixed point. Then from Lemma \ref{lemma-equi}(b) we know ${\tilde{C}_i^\top} G G_i^\top$ is symmetric for $i \in [n]$, which implies that $G$ is a first-order critical point of problem \eqref{Problem}. Consider the statement (c), if $G$ is a first-order critical point, then $\tilde{C}_i^\top G G_i^\top$ is symmetric. Then by the assumption ${\tilde{C}_i^\top} G G_i^\top\succeq \bm{0}$, for each $i\in [n]$, it follows that
\begin{align*}
	\Vert {\tilde{C}_i^\top} G\Vert_* &={\rm tr}\left( \left( {\tilde{C}_i^\top} G ({\tilde{C}_i^\top} G)^\top \right)^{1/2} \right)={\rm tr}\left(\left( {\tilde{C}_i^\top} G G_i^\top G_i ({\tilde{C}_i^\top G)^\top} \right)^{1/2} \right) \\
	&  ={\rm tr}\left( \left( ({\tilde{C}_i^\top} G G_i^\top)^2 \right)^{1/2} \right) ={\rm tr}\left({\tilde{C}_i^\top} G G_i^\top \right).
\end{align*}
At this point, we know from Lemma \ref{lemma-equi}(c) that $G$ is a fixed point of $\mathcal{T}_\alpha$. The proof is complete.
\end{proof}
The following lemma about the second-order critical points has been mentioned in \cite{ling2020solving}. For the sake of completeness, we present the proof. 
\begin{lemma}[Properties of second-order critical points] \label{lemma: socpsum}
	Suppose that $G \in \odn$ is a second-order critical point of problem \eqref{Problem}. Then for each $i\in [n]$ the sum of the smallest two eigenvalues of $C_i^\top GG_i^\top -C_{ii}$ is nonnegative.
\end{lemma}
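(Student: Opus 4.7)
The plan is to test the second-order condition against tangent vectors that are supported on a single block, in order to isolate the $i$-th diagonal block of $S(G)$, and then probe that block with rank-2 skew-symmetric perturbations to extract information about pairs of eigenvalues.

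First, I would fix $i \in [n]$ and, for any skew-symmetric $E \in \mathbb{R}^{d \times d}$, construct $H = H(E) \in \mathbb{R}^{nd \times d}$ by setting $H_i := E G_i$ and $H_j := \bm{0}$ for $j \neq i$. Since $H_i G_i^\top = E$ is skew-symmetric and all other diagonal blocks of $HG^\top$ are zero, $\text{symblockdiag}(HG^\top) = \bm{0}$, so $H \in \mathrm{T}_G \odn$. Because $G$ is a first-order critical point, Lemma \ref{lemma-equi2}(a) (with $\alpha = 0$) gives that $C_i^\top G G_i^\top$ is symmetric, hence the $ii$-block of $S(G) = \text{symblockdiag}(CGG^\top) - C$ is simply $M_i := C_i^\top G G_i^\top - C_{ii}$. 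Since $H$ has a single nonzero block, the second-order condition $\langle H, S(G) H \rangle \geq 0$ collapses to
\begin{equation*}
\operatorname{tr}(H_i^\top M_i H_i) = \operatorname{tr}\bigl(G_i^\top E^\top M_i E G_i\bigr) = \operatorname{tr}(E^\top M_i E) \geq 0,
\end{equation*}
using $G_i G_i^\top = I_d$.

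Next I would specialize to rank-2 skew-symmetric perturbations. For any pair of orthonormal vectors $u, v \in \mathbb{R}^d$, take $E := v u^\top - u v^\top$, which is skew-symmetric. A direct expansion (using $u^\top u = v^\top v = 1$ and $u^\top v = 0$) shows
\begin{equation*}
\operatorname{tr}(E^\top M_i E) = u^\top M_i u + v^\top M_i v,
\end{equation*}
so the inequality above yields $u^\top M_i u + v^\top M_i v \geq 0$ for every orthonormal pair $u, v$.

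Finally, I would minimize over orthonormal pairs: writing $U = [u\ v] \in \mathbb{R}^{d \times 2}$, the quantity $u^\top M_i u + v^\top M_i v = \operatorname{tr}(U^\top M_i U)$, and by the Poincar\'e separation (Cauchy interlacing) theorem applied to the symmetric matrix $M_i$, its minimum over $U^\top U = I_2$ is exactly $\lambda_{d-1}(M_i) + \lambda_d(M_i)$, the sum of the two smallest eigenvalues. Combined with the previous step this gives $\lambda_{d-1}(M_i) + \lambda_d(M_i) \geq 0$, as required. The construction is entirely routine; the only real design choice is choosing $H$ to live on a single block and $E$ to have rank two, which together convert a second-order condition on $\odn$ into a two-eigenvalue bound on a single $d \times d$ symmetric matrix, so I do not anticipate a substantive obstacle.
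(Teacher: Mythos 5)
Your proposal is correct and follows essentially the same route as the paper: restrict the second-order condition to tangent vectors supported on a single block, use first-order criticality to identify the $i$-th diagonal block of $S(G)$ with $C_i^\top G G_i^\top - C_{ii}$, and probe with a rank-2 skew-symmetric matrix (the paper's $E_i = U\hat{E}_iU^\top$ is exactly your $vu^\top - uv^\top$ with $u,v$ the two bottom eigenvectors). Your extra step of minimizing over all orthonormal pairs via Poincar\'e separation is a harmless generalization of the paper's direct eigenvector choice.
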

\begin{proof}
Let $H= [H_1;\dots;H_n]\in T_G \odn$ be such that $H_i = E_iG_i$ and $E_i = -E_i^\top$ for all $i\in [n]$.
Since $G \in \odn$ is a second-order critical point, one has from the definition for each $i\in [n]$ that $\langle C_i^\top GG_i^\top -C_{ii},  H_iH_i^\top  \rangle \geq 0$, which implies that $$\langle  C_i^\top G G_i^\top-  C_{ii}, E_iE_i^\top  \rangle  \geq 0.$$
From Lemma \ref{lemma-equi2}(a) we know that ${C_i^\top} G G_i^\top\in \mathbb{S}^d$, and then $C_i^\top G G_i^\top-  C_{ii}\in \mathbb{S}^d$. Thus, let the spectral decomposition of $C_i^\top G G_i^\top-  C_{ii}$ be $U\Lambda U^\top$, where $\Lambda:= \text{Diag}(\lambda_1,\dots,\lambda_d)$ with $\lambda_1 \geq \dots\geq \lambda_d$, and we obtain
	\begin{equation}\label{soc-key}
		\langle U \Lambda U^\top, E_iE_i^\top  \rangle  = \langle  \Lambda, U^\top E_iU U^\top E_i^\top U  \rangle  \geq 0.
	\end{equation}
	Let $E_i = U\hat{E}_iU^\top$ with $\hat{E}_i \in \mathbb{R}^{d \times d}$ being a zero matrix except for $(\hat{E}_i)_{d,d-1}=1,(\hat{E}_i)_{d-1,d}=-1 $, and then it follows from \eqref{soc-key} that $\lambda_{d-1}+ \lambda_d \geq 0$. The proof is complete.
\end{proof}

Based on the results in Lemma \ref{lemma: socpsum}, for a second-order critical point $G\in\odn$ it follows that $\lambda_{d-1}(C_i^\top GG_i^\top -C_{ii})+\lambda_{d}(C_i^\top GG_i^\top -C_{ii})\geq0$ holds for all $i\in[n]$. If we take $\alpha$ be sufficiently large (e.g., $\alpha \ge \max\{0, \max_{i\in[n]}\{-\lambda_{\min}(C_i^\top GG_i^\top)\}\}$), then $C_i^\top GG_i^\top + \alpha I_d\succeq\bm{0}$ for all $i\in[n]$, and consequently by Lemma \ref{lemma-equi2}(c), the second-order critical point becomes a fixed point of $\mathcal{T}_\alpha$ with such given $\alpha$. However, the statement may not necessarily hold for arbitrary $\alpha$ (including the case $\alpha = 0$). Nevertheless, we still have the following lemma showing that any global maximizer is a fixed point as long as $\alpha \geq 0$.


\begin{lemma}[Global maximizers are fixed points] \label{prop: globalfix}
Let	$\hat{G}\in\odn$ be a global maximizer of problem \eqref{Problem}. Then $\hat{G}$ is a fixed point of $\mathcal{T}_\alpha$ for any $\alpha\ge0$. Moreover, $C_i^\top \hat{G}\hat{G}_i^\top - C_{ii} \succeq \bm{0}$ holds for all $i \in [n]$.
\end{lemma}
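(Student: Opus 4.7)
The plan is to reduce the claim to a coordinatewise (blockwise) optimality condition: if $\hat{G}$ maximizes $f$ on $\odn$, then fixing all blocks except $\hat{G}_i$ and varying $G_i$ over $\od$ must also be maximized at $\hat{G}_i$. This turns each block into a single orthogonal Procrustes problem whose polar-factor structure yields both assertions at once.

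First, I would isolate the $G_i$-dependent part of $f$. A direct expansion using $C_{ii} = C_{ii}^\top$, $C_{ji} = C_{ij}^\top$, and $G_i G_i^\top = I_d$ gives
\begin{equation*}
f(\hat{G}_1,\ldots,G_i,\ldots,\hat{G}_n) = 2\,\mathrm{tr}\bigl(G_i^\top M_i\bigr) + \mathrm{tr}(C_{ii}) + \text{const},
\qquad M_i := \sum_{k\ne i} C_{ik}\hat{G}_k = C_i^\top \hat{G} - C_{ii}\hat{G}_i.
\end{equation*}
Since $\hat{G}$ is a global maximizer of $f$ over $\odn$, $\hat{G}_i$ must maximize $G_i\mapsto \mathrm{tr}(G_i^\top M_i)$ over $\od$, and by the closed-form \eqref{eq: orthoprocru} we have $\hat{G}_i\in \Pi_{\od}(M_i)$. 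Writing an SVD $M_i = U_i\Sigma_i V_i^\top$ with $\hat{G}_i = U_iV_i^\top$, this immediately yields
\begin{equation*}
M_i\hat{G}_i^\top = U_i\Sigma_i V_i^\top V_i U_i^\top = U_i\Sigma_i U_i^\top \;\succeq\; \bm{0}.
\end{equation*}

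Next, observe the block identity
\begin{equation*}
C_i^\top \hat{G}\,\hat{G}_i^\top - C_{ii} \;=\; \bigl(M_i + C_{ii}\hat{G}_i\bigr)\hat{G}_i^\top - C_{ii} \;=\; M_i\hat{G}_i^\top,
\end{equation*}
which, combined with the previous display, gives the second conclusion $C_i^\top \hat{G}\hat{G}_i^\top - C_{ii}\succeq \bm{0}$ for every $i\in[n]$. Since we are operating under the standing assumption $C_{ii}\succeq \bm{0}$, this also gives $C_i^\top \hat{G}\hat{G}_i^\top\succeq \bm{0}$, and adding $\alpha I_d$ with $\alpha\ge 0$ preserves positive semidefiniteness, so $\tilde{C}_i^\top \hat{G}\hat{G}_i^\top\succeq \bm{0}$ for every $\alpha\ge 0$.

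Finally, to conclude that $\hat{G}$ is a fixed point of $\mathcal{T}_\alpha$, I would combine the above with Fact \ref{boumal-fact-2015}, which says that every global maximizer is a second-order and hence first-order critical point of \eqref{Problem}, and then invoke Lemma \ref{lemma-equi2}(c) with the just-established PSD property to conclude $\hat{G}\in\mathcal{T}_\alpha(\hat{G})$ for every $\alpha\ge 0$. I do not anticipate a serious obstacle: the whole argument rests on the coordinatewise maximization reduction to Procrustes, plus the bookkeeping identity $C_i^\top\hat{G}\hat{G}_i^\top - C_{ii} = M_i\hat{G}_i^\top$. The only minor subtlety is that $\Pi_{\od}(M_i)$ can be set-valued when $M_i$ is rank-deficient, but any element of this set is of the form $U_iV_i^\top$ for some SVD of $M_i$, so the PSD conclusion $M_i\hat{G}_i^\top = U_i\Sigma_i U_i^\top$ still holds.
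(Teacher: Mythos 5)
Your proposal is correct, and it takes a genuinely different route from the paper's. The paper establishes the key property $C_i^\top \hat{G}\hat{G}_i^\top - C_{ii} \succeq \bm{0}$ by contradiction: it invokes Lemma \ref{lemma: socpsum} (via second-order criticality) to control the spectrum of $C_t^\top\hat{G}\hat{G}_t^\top - C_{tt}$, then replaces the offending block $\hat{G}_t$ by $\Pi_{\od}\bigl(\sum_{j\neq t}C_{tj}\hat{G}_j\bigr)$ and computes that the objective strictly increases by $-4\lambda_{t,d}>0$, contradicting global optimality. You instead argue directly: global optimality implies blockwise optimality, each block subproblem is an orthogonal Procrustes problem with matrix $M_i=\sum_{k\neq i}C_{ik}\hat{G}_k$ (the diagonal term $\mathrm{tr}(G_i^\top C_{ii}G_i)=\mathrm{tr}(C_{ii})$ being constant on $\od$), so $\hat{G}_i\in\Pi_{\od}(M_i)$ and the SVD structure gives $M_i\hat{G}_i^\top=U_i\Sigma_iU_i^\top\succeq\bm{0}$ at once; the identity $C_i^\top\hat{G}\hat{G}_i^\top-C_{ii}=M_i\hat{G}_i^\top$ then yields the PSD claim, and Lemma \ref{lemma-equi2}(c) closes the fixed-point assertion exactly as in the paper (in fact the symmetry of $M_i\hat{G}_i^\top$ already gives first-order criticality via Lemma \ref{lemma-equi2}(a), so the appeal to Fact \ref{boumal-fact-2015} is optional). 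Both proofs share the same underlying mechanism — one-block Procrustes improvement — but yours is more elementary and self-contained, bypassing Lemma \ref{lemma: socpsum} and the proof by contradiction, while the paper's version additionally quantifies the objective gain from the improving perturbation. Your handling of the set-valued projection (any maximizer of $\langle G_i,M_i\rangle$ is $U_iV_i^\top$ for \emph{some} SVD of $M_i$, even when $M_i$ is rank-deficient) is the right way to cover the degenerate case, and it is correct.
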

\begin{proof}
	In order to derive the conclusion, we will show that $C_i^\top \hat{G} \hat{G}_i^\top -C_{ii}$ is positive semidefinite for all $i\in[n]$. Granting this, one has that $C_i^\top \hat{G} \hat{G}_i^\top$ is positive semidefinite for each $i \in [n]$ since $C_{ii}\succeq\bm{0}$. Consequently, we know from Lemma \ref{lemma-equi2}(c) that $\hat{G}$ is a fixed point.
	
	
To proceed, note that for each $i\in[n]$, $C_i^\top \hat{G}\hat{G}_i^\top - C_{ii}$ is symmetric (from the property of first-order critical point by Lemma \ref{lemma-equi2} (a)), then we have the spectral decomposition 
$$C_i^\top \hat{G}\hat{G}_i^\top- C_{ii} = \left(\sum_{j\neq i} C_{ij}\hat{G}_j\right)\hat{G}_i^\top  = U_i \text{Diag}(\lambda_{i,1},\ldots,\lambda_{i,d})U_i^\top,$$ 
where $\lambda_{i,1}\ge \cdots\ge\lambda_{i,d}$ are eigenvalues and $U_i\in\mathbb{R}^{d\times d}$ is the eigenvector matrix of $C_i^\top \hat{G}\hat{G}_i^\top - C_{ii}$. Suppose to the contrary that there exists an index $t \in [n]$ such that $C_t^\top \hat{G}\hat{G}_t^\top - C_{tt}$ is not positive semidefinite, then from Lemma \ref{lemma: socpsum} we know that the sum of the smallest two eigenvalues of  $C_t^\top \hat{G}\hat{G}_t^\top -C_{tt}$  is nonnegative (i.e., $\lambda_{t,d-1}+\lambda_{t,d}\ge0$), which implies that $\lambda_{t,d}<0$.
Let
	\begin{align*}
			\bar{G} := \left(\hat{G}_1;\ldots;\hat{G}_{t-1};\Pi_{\mathcal{O}_d}\left(\sum_{j\neq t} C_{tj}\hat{G}_j\right);\hat{G}_{t+1};\ldots;\hat{G}_n \right)\in \odn.
	\end{align*}
Since for any $X\in\mathbb{R}^{d\times d}$, we have $\Pi_{\od}(X)=U_X V_X^\top$ from \eqref{eq: orthoprocru}, and then 
$$\left\langle X, \Pi_{\od}(X)\right\rangle={\rm tr}\left(U_X \Sigma_X V_X^\top V_X U_X^\top\right)=\Vert X\Vert_*,$$
which implies that
\begin{align*}
	\sum_{j\neq t} \textrm{tr}\left(\bar{G}_t^\top C_{tj} \hat{G}_j\right)=  \left\langle \sum_{j\neq t} C_{tj}\hat{G}_j, \Pi_{\mathcal{O}_d}\left(\sum_{j\neq t} C_{tj}\hat{G}_j\right) \right\rangle=  \bigg\Vert \sum_{j\neq t} C_{tj}\hat{G}_j\bigg\Vert_*
	=  \Vert C_{t}\hat{G}\hat{G}_t^\top-C_{tt}\Vert_*.
\end{align*}
Then it follows that
\begin{align}\label{3.6-key1}
	\sum_{j\neq t} \textrm{tr}\left((\bar{G}_t-\hat{G}_t)^\top C_{tj} \hat{G}_j\right)
	&=\sum_{j\neq t} \textrm{tr}\left(\bar{G}_t^\top C_{tj} \hat{G}_j\right)-\sum_{j\neq t} \textrm{tr}\left(\hat{G}_t^\top C_{tj} \hat{G}_j\right)\notag\\
	&=  \Vert C_{t}\hat{G}\hat{G}_t^\top-C_{tt}\Vert_*-\textrm{tr}\left(\hat{G}_t^\top  C_{t} \hat{G}-C_{tt}\right)\notag\\
	&=  \sum_{l=1}^d (\sigma_{t,l}-\lambda_{t,l}) = -2\lambda_{t,d},
\end{align}
where $\sigma_{t,l}$ is $l$-th largest singular value of $C_{t}\hat{G}\hat{G}_t^\top-C_{tt}$. Hence, we know from \eqref{3.6-key1} and $\lambda_{t,d}<0$ that

\begin{align*}
	&\textrm{tr}\left(\bar{G}^\top C \bar{G}\right) - \textrm{tr}\left(\hat{G}^\top C \hat{G}\right)\notag\\ 
	&=\sum_{i=1}^n\sum_{j=1}^n \textrm{tr}\left(\bar{G}_i^\top C_{ij} \bar{G}_j\right) - \sum_{i=1}^n\sum_{j=1}^n\textrm{tr}\left(\hat{G}_i^\top C_{ij} \hat{G}_j\right)\\
	&=\sum_{i=1}^n \textrm{tr}\left( \bar{G}_i^\top C_{it} \bar{G}_t \right)+\sum_{j=1}^n \textrm{tr}\left(\bar{G}_t^\top C_{tj} \bar{G}_j\right)- \sum_{i=1}^n \textrm{tr}\left( \hat{G}_i^\top C_{it} \hat{G}_t \right)-\sum_{j=1}^n \textrm{tr}\left(\hat{G}_t^\top C_{tj} \hat{G}_j\right)\notag\\
	&=\sum_{i\neq t} \textrm{tr}\left( \hat{G}_i^\top C_{it} (\bar{G}_t-\hat{G}_t) \right)+\sum_{j\neq t} \textrm{tr}\left((\bar{G}_t-\hat{G}_t)^\top C_{tj} \hat{G}_j\right)\notag\\
	&=2\sum_{j\neq t} \textrm{tr}\left((\bar{G}_t-\hat{G}_t)^\top C_{tj} \hat{G}_j\right)
	 = -4\lambda_{t,d} >0,
\end{align*}
which contradicts with the fact that $\hat{G}$ is a global maximizer. The proof is complete.
\end{proof}



It is a natural question to see whether the results in this section are also available for problem \eqref{Problem} with the special orthogonal group constraints $\sodn$, i.e.,
\begin{equation}\label{Problem-S}\tag{QP-S}
	\begin{array}{ll}
	\max\limits_{G\in \sodn } f(G):=\textrm{tr}(G^\top C G).\\
	\end{array}
	\end{equation}
The projection onto $\sod$ denoted as $\Pi_{\sod}(\cdot)$ has a closed-form solution \cite[Lemma 4]{liu2020unified} that for any $X\in\mathbb{R}^{d\times d}$,
$$\Pi_{\mathcal{S O}(d)}(X)=U_{X} I_{X} V_{X}^{\top},\quad \text{where}\
I_{X}:={\rm Diag}\left([1;\ldots;1;\operatorname{det}(U_{X} V_{X}^{\top})]\right) \in \mathbb{R}^{d \times d}.
$$
Let $\alpha\ge0$ and we have the following characterization for fixed points of the multi-valued mapping $\mathcal{ST}_\alpha: \sodn \rightrightarrows \sodn$ defined by
\begin{align}\label{def-ST0}
\mathcal{ST}_\alpha(G):=\mathop{\arg\min}_{G' \in \sodn}f_{G,\alpha}(G')=\Pi_{\mathcal{S O}(d)}(\tilde{C}G),\quad \text{for any}\ G\in \sodn.
\end{align}

\begin{lemma}[Characterization for fixed points of $\mathcal{ST}_\alpha$]\label{lemma-sodequi2}
	Let $\alpha\ge0$ and $G\in\sodn$. The following assertions are equivalent:
\begin{align}
G\in  \mathcal{ST}_\alpha(G)
&\Longleftrightarrow {\tilde{C}_i^\top} G G_i^\top=U_{{\tilde{C}_i^\top}  G}\Sigma_{{\tilde{C}_i^\top} G} I_{{\tilde{C}_i^\top} G} U^\top_{{\tilde{C}_i^\top} G},\ \text{for each}\ i\in [n]\notag\\
&\Longleftrightarrow {\rm tr}({\tilde{C}_i^\top} G G_i^\top)=\Vert {\tilde{C}_i^\top} G\Vert_* - 2\cdot \asign{ \det(\tilde{C}_i^\top G)} \sigma_{i,d},\ \text{for each}\ i\in [n]\notag
\end{align}
where $\sigma_{i,d}$ is the smallest singular value of $\tilde{C}_i^\top G$, and $\asign{t} = 0$ if $t\geq 0$, $\asign{t} = 1$ otherwise.
\end{lemma}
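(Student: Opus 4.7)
The plan is to mirror the three-way equivalence proof of Lemma \ref{lemma-equi}, with the extra bookkeeping that the determinant constraint distinguishing $\sod$ from $\od$ demands. The pivotal auxiliary fact is the following variational identity, whose maximizers are precisely the elements of $\Pi_{\sod}(X)$: for every $X\in\mathbb{R}^{d\times d}$,
\begin{equation*}
\max_{M\in\sod} \langle X,M\rangle \;=\; \|X\|_{*} - 2\,\asign{\det(X)}\,\sigma_{d}(X).
\end{equation*}
I would derive this identity by writing $\langle X,M\rangle=\mathrm{tr}(\Sigma_{X} N)$ with $N:=V_{X}^{\top} M U_{X}\in\od$, noting that $\det(N)=\det(U_{X} V_{X}^{\top})\det(M)=\det(U_{X} V_{X}^{\top})$ is fixed once $M$ ranges over $\sod$, and that $\det(U_{X} V_{X}^{\top})$ agrees with the sign of $\det(X)$ whenever $\det(X)\neq 0$ because $\det(\Sigma_{X})\geq 0$. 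Maximizing $\sum_{j}\sigma_{X,j} N_{jj}$ subject to $|N_{jj}|\le 1$ gives $N=I_{d}$ in the $+1$ case (value $\|X\|_{*}$) and $N=\mathrm{Diag}(1,\ldots,1,-1)$ in the $-1$ case (value $\|X\|_{*}-2\sigma_{d}(X)$), since flipping the sign attached to the \emph{smallest} singular value is the cheapest way to meet the determinant constraint. The singular case $\sigma_{d}(X)=0$ is harmless because both sign choices then give the same value.

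With this identity in hand, the three implications are short. For $(a)\Rightarrow(b)$ I would substitute $G_{i}=U_{\tilde{C}_{i}^{\top} G}\, I_{\tilde{C}_{i}^{\top} G}\, V_{\tilde{C}_{i}^{\top} G}^{\top}$ into $\tilde{C}_{i}^{\top} G G_{i}^{\top}$ and use commutativity of the diagonal factors $\Sigma_{\tilde{C}_{i}^{\top} G}$ and $I_{\tilde{C}_{i}^{\top} G}$ to collapse the product to $U_{\tilde{C}_{i}^{\top} G}\Sigma_{\tilde{C}_{i}^{\top} G} I_{\tilde{C}_{i}^{\top} G} U^{\top}_{\tilde{C}_{i}^{\top} G}$. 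For $(b)\Rightarrow(c)$ I take trace to get $\mathrm{tr}(\Sigma_{\tilde{C}_{i}^{\top} G} I_{\tilde{C}_{i}^{\top} G}) = \sum_{j=1}^{d-1}\sigma_{i,j} + \det(U_{\tilde{C}_{i}^{\top} G}V_{\tilde{C}_{i}^{\top} G}^{\top})\,\sigma_{i,d}$, and use $\det(G_{i})=1$ to read off the determinant sign in terms of $\det(\tilde{C}_{i}^{\top} G)$, which produces the claimed $\asign$ correction. For $(c)\Rightarrow(a)$, observe that $\mathrm{tr}(\tilde{C}_{i}^{\top} G G_{i}^{\top})=\langle \tilde{C}_{i}^{\top} G, G_{i}\rangle$, so (c) asserts that $G_{i}\in\sod$ attains the maximum in the variational identity above; hence $G_{i}\in\Pi_{\sod}(\tilde{C}_{i}^{\top} G)=\mathcal{ST}_{\alpha}(G)_{i}$, and ranging over $i\in[n]$ delivers (a).

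The main obstacle I expect is cosmetic rather than structural: carefully tracking the sign of $\det(U_{X} V_{X}^{\top})$ in terms of $\det(X)$ across both the invertible and rank-deficient regimes, and confirming that the $\asign$ convention of the lemma statement matches the sign that the argument actually produces (in particular, the convention $\asign{0}=0$ is the right choice because $\sigma_{d}=0$ in that case, making the correction term vanish regardless). Once the variational identity is stated and proven in this uniform way, none of the remaining implications require any computation beyond what already appears in the proof of Lemma \ref{lemma-equi}.
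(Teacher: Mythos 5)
Your overall route is the natural one (and matches what the paper leaves implicit, since it states this lemma without proof): reduce everything to the variational identity $\max_{M\in\sod}\langle X,M\rangle=\|X\|_*-2\,\asign{\det X}\,\sigma_d(X)$, whose maximizers form $\Pi_{\sod}(X)$, and then run the cycle (a)$\Rightarrow$(b)$\Rightarrow$(c)$\Rightarrow$(a) exactly as in Lemma \ref{lemma-equi}. Those three implications are fine as you describe them, and the transposition slip in your change of variables ($N$ should be $U_X^\top M V_X$ rather than $V_X^\top M U_X$ for the inner product ${\rm tr}(X^\top M)$) is harmless.

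The one genuine gap is in your proof of the identity itself when $\det X<0$. After passing to $N\in\od$ with $\det N=\det(U_XV_X^\top)=-1$, you maximize $\sum_j\sigma_j N_{jj}$ ``subject to $|N_{jj}|\le 1$'' and assert that flipping the sign attached to the smallest singular value is the cheapest way to meet the determinant constraint. But the determinant constraint acts on the whole orthogonal matrix, not on its diagonal: since $N_{jj}=-1$ forces $Ne_j=-e_j$, any $N$ with $\det N=-1$ whose $(-1)$-eigenvectors avoid the coordinate axes (e.g.\ a Householder reflection $I-2vv^\top$ with $v$ not a coordinate vector) has every diagonal entry strictly greater than $-1$, so your description of the feasible set does not justify the upper bound $\sum_{j<d}\sigma_j-\sigma_d$. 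The standard repair is short: an $N\in\od$ with $\det N=-1$ has unit-modulus eigenvalues occurring in conjugate pairs, with real eigenvalues $\pm1$ and $-1$ of odd multiplicity, hence $\operatorname{tr}(N)\le d-2$; then $\operatorname{tr}(\Sigma_X N)=\sigma_d\operatorname{tr}(N)+\sum_j(\sigma_j-\sigma_d)N_{jj}\le\sigma_d(d-2)+\sum_{j=1}^{d-1}(\sigma_j-\sigma_d)=\sum_{j=1}^{d-1}\sigma_j-\sigma_d$, and $M=U_XI_XV_X^\top$ attains this value. Alternatively you may simply invoke \cite[Lemma 4]{liu2020unified}, which the paper already uses for the closed form of $\Pi_{\sod}$ and which yields exactly this optimal value. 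With that repair, and noting as you do that $\det X=0$ forces $\sigma_d(X)=0$ so the correction term vanishes for either sign of $\det(U_XV_X^\top)$, the rest of your argument goes through.
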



\begin{lemma}[Second-order critical points are fixed points of $\mathcal{ST}_\alpha$]
	Let	$G\in\sodn$ be a second-order critical point of problem \eqref{Problem-S} which is also a second-order critical point of problem \eqref{Problem}. Then $G$ is a fixed point of $\mathcal{ST}_\alpha$ for any $\alpha\ge0$. 
\end{lemma}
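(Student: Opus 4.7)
The plan is to verify the trace characterization from Lemma \ref{lemma-sodequi2}, namely
\[
\operatorname{tr}(\tilde{C}_i^\top G G_i^\top) = \|\tilde{C}_i^\top G\|_* - 2\cdot\asign{\det(\tilde{C}_i^\top G)}\cdot\sigma_{i,d}, \quad i\in[n],
\]
by reducing both sides to a computation in the eigenvalues of $\tilde{C}_i^\top G G_i^\top$. Structurally the argument mirrors Lemma \ref{prop: globalfix}: rather than establishing full positive semidefiniteness of $\tilde{C}_i^\top G G_i^\top - \tilde{C}_{ii}$, I will show only that $\tilde{C}_i^\top G G_i^\top$ has at most one negative eigenvalue, and then let the $\sod$-specific correction $-2\cdot\asign{\det}\cdot\sigma_{i,d}$ absorb the sign.

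First, since $\sod$ is a connected component of $\od$, the tangent space $\mathrm{T}_G\sodn$ coincides with $\mathrm{T}_G\odn$ at every $G\in\sodn$, so the first- and second-order conditions of Definition \ref{def-crit} for \eqref{Problem-S} coincide with those for \eqref{Problem}. In particular Lemma \ref{lemma-equi2}(a) applies and $\tilde{C}_i^\top G G_i^\top$ is symmetric, hence I may diagonalize $\tilde{C}_i^\top G G_i^\top = W_i \mathrm{Diag}(\lambda_{i,1},\ldots,\lambda_{i,d}) W_i^\top$ with $W_i\in\od$ and $\lambda_{i,1}\ge\cdots\ge\lambda_{i,d}$.

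Next, combining Lemma \ref{lemma: socpsum} (which, using $\tilde{C}_i^\top G G_i^\top - \tilde{C}_{ii} = C_i^\top G G_i^\top - C_{ii}$, gives that the two smallest eigenvalues of the shifted matrix sum to a nonnegative number) with Fact \ref{lemma: weyl} and the standing assumption $C_{ii}\succeq\bm{0}$, I would deduce $\lambda_{i,d-1}+\lambda_{i,d}\ge 0$. Hence at most one eigenvalue is negative, and in the case $\lambda_{i,d}<0$ one gets $\lambda_{i,j}\ge\lambda_{i,d-1}\ge -\lambda_{i,d}>0$ for every $j<d$. Because $W_i^\top G_i\in\od$, the singular values of $\tilde{C}_i^\top G = W_i\,\mathrm{Diag}(\lambda_{i,j})\,(W_i^\top G_i)$ are exactly $\{|\lambda_{i,j}|\}_j$, and the inequality above shows they are already in decreasing order $\lambda_{i,1},\ldots,\lambda_{i,d-1},|\lambda_{i,d}|$, so $\sigma_{i,d}=|\lambda_{i,d}|$; moreover $\det(\tilde{C}_i^\top G)=\det(G_i)\prod_j\lambda_{i,j}=\prod_j\lambda_{i,j}$ since $G_i\in\sod$.

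Finally I would split on the sign of $\lambda_{i,d}$. If $\lambda_{i,d}\ge 0$, then $\det(\tilde{C}_i^\top G)\ge 0$ so $\asign{\det(\tilde{C}_i^\top G)}=0$, and both sides of the displayed identity equal $\sum_j\lambda_{i,j}$. If $\lambda_{i,d}<0$, then the strict positivity of $\lambda_{i,j}$ for $j<d$ forces $\det(\tilde{C}_i^\top G)<0$, so $\asign{\det(\tilde{C}_i^\top G)}=1$, and the right-hand side becomes $\sum_{j<d}\lambda_{i,j}+|\lambda_{i,d}|-2|\lambda_{i,d}|=\sum_j\lambda_{i,j}$, matching the left-hand side. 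Applying Lemma \ref{lemma-sodequi2} block-by-block then yields $G\in\mathcal{ST}_\alpha(G)$. The step I expect to be most delicate is the $\lambda_{i,d}<0$ branch: one needs both the strict inequality $\lambda_{i,j}>0$ for $j<d$ (to guarantee $\det(\tilde{C}_i^\top G)$ is strictly negative so that the asign-correction activates) and the observation that the singular values emerge already sorted in decreasing order (so $\sigma_{i,d}=|\lambda_{i,d}|$ exactly), and both facts rely crucially on the SOCP sum-bound combined with $C_{ii}\succeq\bm{0}$.
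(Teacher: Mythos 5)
Your proposal is correct and follows essentially the same route as the paper: apply Lemma \ref{lemma: socpsum} (shifting by the positive semidefinite block $C_{ii}$, resp.\ $\tilde{C}_{ii}$) to get $\lambda_{i,d-1}+\lambda_{i,d}\ge 0$ for $\tilde{C}_i^\top G G_i^\top$, then split on the sign of $\lambda_{i,d}$ and verify the trace identity of Lemma \ref{lemma-sodequi2} in each case. The only difference is that you spell out details the paper leaves implicit (the Weyl step and the identification of the singular values of $\tilde{C}_i^\top G$ with the sorted absolute eigenvalues, using $\det(G_i)=1$), which is a faithful elaboration rather than a different argument.
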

\begin{proof}
From Lemma \ref{lemma: socpsum} we know that for each $i\in[n]$, the sum of the smallest two eigenvalues of $\tilde{C}_i^\top GG_i^\top$ is nonnegative. For the case $\lambda_{i,d-1}\ge\lambda_{i,d} \ge0$, since $\det(\tilde{C}_i^\top G)\ge0$  we know that 
$${\rm tr}({\tilde{C}_i^\top} G G_i^\top)=\Vert {\tilde{C}_i^\top} G\Vert_*=\Vert {\tilde{C}_i^\top} G\Vert_* - 2\cdot \asign{ \det(\tilde{C}_i^\top G)} \sigma_{i,d}.$$
On the other hand, suppose that $\lambda_{i,d-1} >0$, $\lambda_{i,d} <0$ and it follows that $\det(\tilde{C}_i^\top G)<0$. Then $${\rm tr}(\tilde{C}_i^\top GG_i^\top) = \| \tilde{C}_i^\top G\|_* - 2 \sigma_{i,d}=\Vert {\tilde{C}_i^\top} G\Vert_* - 2\cdot \asign{ \det(\tilde{C}_i^\top G)} \sigma_{i,d}.$$ 
Thus, from Lemma \ref{lemma-sodequi2} we know that $G$ is a fixed point of $\mathcal{ST}_\alpha$.
\end{proof}




Consequently, we have the following relationships for problem \eqref{Problem-S}:
\begin{align}
	\text{GMs of \eqref{Problem-S}}\ \mbox{\Large$\subseteq$}\ \text{SOCPs of \eqref{Problem-S}}\ \mbox{\Large$\subseteq$} \bigcap_{\alpha\ge 0} \left\{\text{FPs of $\mathcal{ST}_\alpha$}\right\} \mbox{\Large$\subseteq$} \bigcup_{\alpha\ge 0} \left\{\text{FPs of $\mathcal{ST}_\alpha$}\right\} \mbox{\Large$\subseteq$}\ \text{FOCPs of \eqref{Problem-S}}. \notag
	\end{align}
However, let $\hat{G}$ be a global maximizer of problem (QP-S), then the property that $C_i^\top \hat{G}\hat{G}_i^\top - C_{ii} \succeq \bm{0}$, $i \in [n]$ does not necessarily hold anymore, which means that the second part of Lemma \ref{prop: globalfix} fails. A counter example is as follows.
\begin{example}
Let $d = 3$, $n=2$ and  $C = \begin{pmatrix} \bm{0} & -I_3 \\ -I_3 & \bm{0} \end{pmatrix}$. Then problem \eqref{Problem-S} becomes 
\begin{equation*}
\max_{G_1, G_2 \in \mathcal{SO}(3)} -2\cdot{\rm tr}(G_1^\top G_2).
\end{equation*}
It can be verified that $\hat{G}=[\hat{G}_1;\hat{G}_2]$ with $\hat{G}_1 = I_3$ and $\hat{G}_2 =  \begin{pmatrix} 1 & 0 & 0  \\ 0 & -1 & 0 \\ 0 & 0 & -1  \end{pmatrix}$ is a global optimal solution. However, $C_2^\top \hat{G}\hat{G}_2^\top = \begin{pmatrix} -1 & 0 & 0  \\ 0 & 1 & 0 \\ 0 & 0 & 1  \end{pmatrix} $ is not positive semidefinite.
\end{example}


\section{Local error bound property}


In this section, we shall utilize the generative model given by problem \eqref{Problem} to obtain more powerful results. Among these, the most important and challenging one is the local error bound property of problem \eqref{Problem}, which is widely employed to derive the convergence rate analyses of various algorithms for different types of optimization problems  \cite{luo1993error,zhou2017unified,liu2019quadratic,zhang2020new,wang2021linear}. 

Our local error bound result provides a computable upper bound on the distance between any feasible point $G\in \odn$ around the ground truth $G^\star$ and the set of global maximizers of problem \eqref{Problem}. To begin with, we first show that any global maximizer of problem \eqref{Problem} is close to the ground truth $G^\star$. Without loss of generality, we assume $W_{ii}=\mu$ for all $i\in [n]$, where $\mu>0$ is a constant. 

\begin{lemma}[Distance between $\hat{G}$ and $G^\star$] \label{dist-GstarGhat}
Let $G \in \mathbb{R}^{nd \times d}$ be such that $G^\top G = nI_d$ and ${\rm tr}(G^{\top} C G) \geq {\rm tr} \left(G^{\star\top} C G^{\star} \right)$ (e.g., $G$ is a global maximizer of problem \eqref{Problem}). Then it follows that
  \begin{equation}\label{dist-GstarGhat-ineq}
  {\rm d}(G,G^\star)=\min\limits_{g\in \od } \Vert G-G^{\star}g\Vert_F\leq   \frac{4\sqrt{d} \left(   \left\| W-{\mu \cdot 1_n1_n^\top} \right\| + \left\| A \circ \Delta \right\|  \right) }{\sqrt{n}\mu}.
  \end{equation}
  \end{lemma}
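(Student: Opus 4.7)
The plan is to feed the signal-plus-noise decomposition of $C$ into the suboptimality inequality $\operatorname{tr}(G^\top C G) \geq \operatorname{tr}(G^{\star\top} C G^\star)$ and then translate both sides into quantities involving ${\rm d}(G,G^\star)$. Setting $B := W - \mu\, 1_n 1_n^\top$, the model \eqref{eq: gsmodel3} splits as
$$C = \mu\, G^\star G^{\star\top} + E_1 + E_2, \qquad E_1 := (B\otimes 1_d 1_d^\top)\circ G^\star G^{\star\top}, \qquad E_2 := A\circ\Delta,$$
where the diagonal blocks of $C$ are absorbed into $\mu G^\star G^{\star\top}$ via $G^\star_i G^{\star\top}_i = I_d$ and $W_{ii} = \mu$. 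Using $G^{\star\top} G^\star = n I_d$ and the hypothesis $G^\top G = n I_d$, the suboptimality inequality rearranges to
$$\mu\bigl(n^2 d - \|G^{\star\top} G\|_F^2\bigr) \;\leq\; \operatorname{tr}\bigl((GG^\top - G^\star G^{\star\top})(E_1 + E_2)\bigr).$$

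For the left-hand side I would pass from the Frobenius norm to the nuclear norm. Since $\|G^{\star\top} G\|\leq \|G^\star\|\cdot\|G\| = n$, every singular value $\sigma_i$ of $G^{\star\top} G$ satisfies $\sigma_i^2 \leq n\sigma_i$, whence $\|G^{\star\top} G\|_F^2 \leq n\,\|G^{\star\top} G\|_*$. Taking traces of $G^\top G = nI_d$ gives $\|G\|_F^2 = nd = \|G^\star\|_F^2$, so the identity
$${\rm d}^2(G,G^\star) = \|G\|_F^2 + \|G^\star\|_F^2 - 2\max_{g\in\od}\langle G, G^\star g\rangle = 2(nd - \|G^{\star\top} G\|_*)$$
holds verbatim as in the $\odn$ case recalled earlier. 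Combining the two estimates,
$$n^2 d - \|G^{\star\top} G\|_F^2 \;\geq\; n\bigl(nd - \|G^{\star\top} G\|_*\bigr) \;=\; \tfrac{n}{2}\,{\rm d}^2(G,G^\star).$$

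For the right-hand side I would pick $g^\star \in \arg\min_{g\in\od}\|G - G^\star g\|_F$ and set $\bar G := G^\star g^\star$, so $\bar G \bar G^\top = G^\star G^{\star\top}$. The telescoping
$$GG^\top - \bar G \bar G^\top \;=\; G(G-\bar G)^\top + (G-\bar G)\bar G^\top$$
writes the difference as a sum of two matrices each of rank at most $d$; applying $\|X\|_*\leq \sqrt{d}\,\|X\|_F$ separately and using $\|G\|=\|\bar G\|=\sqrt n$ then yields
$$\|GG^\top - G^\star G^{\star\top}\|_* \;\leq\; 2\sqrt{nd}\,{\rm d}(G,G^\star).$$
The noise norms are handled by noting that $E_1 = \operatorname{bdiag}(G^\star)(B\otimes I_d)\operatorname{bdiag}(G^\star)^\top$ is an orthogonal conjugation (since $\operatorname{bdiag}(G^\star)$ is block-diagonal with blocks $G^\star_i\in\od$), giving $\|E_1\| = \|B\| = \|W-\mu\, 1_n 1_n^\top\|$, while $\|E_2\| = \|A\circ\Delta\|$ needs no manipulation. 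Trace/operator-norm duality then controls the right-hand side by $2\sqrt{nd}\,{\rm d}(G,G^\star)\,\bigl(\|W-\mu\, 1_n 1_n^\top\| + \|A\circ\Delta\|\bigr)$.

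Combining the two sides and cancelling one factor of ${\rm d}(G,G^\star)$ yields exactly \eqref{dist-GstarGhat-ineq}. The main delicacy is getting the constant right: naively treating $GG^\top - G^\star G^{\star\top}$ as a single rank-$2d$ matrix and invoking $\|\cdot\|_*\leq\sqrt{2d}\,\|\cdot\|_F$ loses an unwanted $\sqrt 2$, and only the polar-aligned splitting through $\bar G$ into two rank-$d$ summands produces the sharp $\sqrt d$ appearing in the statement.
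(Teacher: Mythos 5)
Your proof is correct and its skeleton coincides with the paper's: same decomposition $C=\mu G^\star G^{\star\top}+\bigl((W-\mu 1_n1_n^\top)\otimes 1_d1_d^\top\bigr)\circ G^\star G^{\star\top}+A\circ\Delta$, same use of the suboptimality inequality, and the same key estimate $\|G^{\star\top}G\|_F^2\leq n\|G^{\star\top}G\|_*$ together with ${\rm d}^2(G,G^\star)=2(nd-\|G^{\star\top}G\|_*)$, which indeed remains valid under only $G^\top G=nI_d$. Where you diverge is the noise term: you pass to $\operatorname{tr}\bigl(E(GG^\top-G^\star G^{\star\top})\bigr)$, invoke nuclear/operator-norm duality, and control $\|GG^\top-\bar G\bar G^\top\|_*$ by the rank-$d$ telescoping $G(G-\bar G)^\top+(G-\bar G)\bar G^\top$, yielding $2\sqrt{nd}\,{\rm d}(G,G^\star)$; the paper instead writes the same difference of traces as $\operatorname{tr}\bigl((G-G^\star)^\top E\,(G+G^\star)\bigr)$ after aligning $G$ with $G^\star$, and applies Cauchy--Schwarz with $\|G+G^\star\|_F\leq 2\sqrt{nd}$, obtaining exactly the same factor. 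So your closing remark that only the polar-aligned rank-$d$ splitting produces the sharp $\sqrt{d}$ is overstated: the simpler factorization already gives it, the $\sqrt{d}$ coming from $\|G+G^\star\|_F\leq 2\sqrt{nd}$ rather than from any rank count. Your conjugation identity $E_1=\operatorname{bdiag}(G^\star)\,( (W-\mu 1_n1_n^\top)\otimes I_d)\,\operatorname{bdiag}(G^\star)^\top$, giving $\|E_1\|=\|W-\mu 1_n1_n^\top\|$ directly, is a clean alternative to the paper's sup-over-unit-vectors computation in \eqref{eq: agstargstarnorm}; both arguments are valid and yield the identical bound \eqref{dist-GstarGhat-ineq}.
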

\begin{proof}
Without loss of generality, we assume that $G$ satisfies $\textrm{tr}(G^{\top} G^{\star})=$ $\|G^{\top} G^{\star}\|_{*}$ and consequently ${\rm d}(G,G^\star) =  \Vert G-G^\star\Vert_F$.
We know that
$$\textrm{tr} \left(G^{\top} \left(A \circ G^{\star}G^{\star\top} \right) G\right)
=\mu \cdot \textrm{tr} \left( G^\top    G^{\star} G^{\star\top} G \right) + \textrm{tr} \left(G^{\top} \left((A- \mu\cdot 1_{nd}1_{nd}^\top) \circ G^{\star} G^{\star \top} \right) G \right),$$
and then it follows from $C = A \circ (G^\star G^{\star\top} + \Delta )$ that
\begin{align}\label{GCG-ineq}
\textrm{tr} \left(G^{\top} C G \right)
&=\textrm{tr} \left(G^{\top} \left(A \circ G^{\star}G^{\star\top} \right) G\right)+\textrm{tr}\left(G^{\top}(A \circ \Delta) G \right)\notag\\
&=\mu \cdot \textrm{tr} \left( G^\top    G^{\star} G^{\star\top} G \right) +\textrm{tr} \left(G^{\top} \left((A-\mu\cdot 1_{nd}1_{nd}^\top) \circ G^{\star} G^{\star\top} + (A \circ \Delta) \right) G \right).
\end{align}
Thus, from $\textrm{tr}(G^{\top} C G) \geq \textrm{tr} \left(G^{\star \top} C G^{\star} \right)$ and \eqref{GCG-ineq},
 one has that
\begin{align}\label{dist-GstarGhat-key1}
& \mu \left(n^{2} d-\textrm{tr}(G^{\top} G^{\star}G^{\star\top} G ) \right)  \nonumber\\
& \leq \textrm{tr} \left(G^{\top} \left((A-{\mu\cdot 1_{nd}1_{nd}^\top}) \circ G^{\star} G^{\star\top} + (A \circ \Delta) \right) G \right) - \textrm{tr} \left(G^{\star\top} \left((A-{\mu\cdot 1_{nd}1_{nd}^\top}) \circ G^{\star}G^{\star\top}+ (A \circ \Delta) \right) G^{\star} \right)  \nonumber \\
& = \textrm{tr} \left(\left(G-G^{\star}\right)^{\top} \left((A-{\mu\cdot 1_{nd}1_{nd}^\top}) \circ G^{\star}G^{\star\top} + (A \circ \Delta) \right) \left(G+G^{\star}\right)\right)   \nonumber \\
& \leq 2\sqrt{nd} \left(   \| (A-{\mu\cdot 1_{nd}1_{nd}^\top}) \circ  G^{\star}G^{\star \top} \| + \left\| A \circ \Delta \right\|  \right) {\textrm{d}}(G,G^\star),
\end{align}
where the last inequality is from $\Vert G+G^*\Vert_F\leq \Vert G\Vert_F+\Vert G^*\Vert_F\leq  2\sqrt{nd}$ and ${\rm d}(G,G^\star) =  \Vert G-G^\star\Vert_F$. Now we estimate the term $\| (A-{\mu\cdot 1_{nd}1_{nd}^\top}) \circ  G^{\star}G^{\star \top} \|$. Let $\bm{u}:=[\bm{u}_1;\ldots;\bm{u}_n]\in\mathbb{R}^{nd}$ and $\bm{v}:=[\bm{v}_1;\ldots;\bm{v}_n]\in\mathbb{R}^{nd}$ where $\bm{u}_i,\bm{v}_i\in\mathbb{R}^d$ for each $i\in[n]$. From the property of the operator norm \cite[Theorem 5.6.2(d)]{horn2012matrix} we know that
$$\left\| (A-{\mu\cdot 1_{nd}1_{nd}^\top}) \circ  G^{\star}G^{\star \top} \right\| =\sup_{ \| \bm{u}\|_2= \| \bm{v} \|_2=1 } \left| \bm{u}^\top (A-{\mu\cdot 1_{nd}1_{nd}^\top})\circ  G^{\star}G^{\star \top} \bm{v}         \right|,$$
and this together with $A = W \otimes (1_d 1_d^\top)$ implies that
	\begin{align}
	  \left\| (A-{\mu\cdot 1_{nd}1_{nd}^\top}) \circ  G^{\star}G^{\star \top} \right\|  
	& =\sup_{ \| \bm{u}\|_2= \| \bm{v} \|_2=1 } \left| \sum_{1\leq i \neq j \leq n}  (W_{ij}-\mu) \bm{u}_i^\top G_i^\star G_j^{\star\top} \bm{v}_j          \right| \notag \\
	& = \sup_{\| \bm{u}\|_2= \| \bm{v} \|_2=1} \left| \sum_{1\leq i \neq j \leq n}  (W_{ij}-\mu) \bm{u}_i^\top  \bm{v}_j          \right| \notag \\
	& =  \sup_{\| \bm{u}\|_2= \| \bm{v} \|_2=1} \left|  \sum_{1\leq i \neq j \leq n} \bm{u}_i^\top \left( (W_{ij}-\mu)I_d  \right)    \bm{v}_j          \right| \notag \\
	& = \| (W - {\mu \cdot 1_n1_n^\top}) \otimes I_d \| = \|W - {\mu \cdot 1_n1_n^\top}\|   \label{eq: agstargstarnorm}.
\end{align}
Thus, combining the fact
\begin{equation*}
  \textrm{tr}(G^{\top} G^{\star}G^{\star\top} G)\leq \Vert G^{\top} G^{\star}\Vert \Vert G^{\top} G^{\star}\Vert_*  \leq n \Vert G^{\top} G^{\star}\Vert_*
\end{equation*}
with \eqref{eq: agstargstarnorm} and \eqref{dist-GstarGhat-key1}, it concludes that
\begin{align*}
 \frac{n\mu}{2}{\rm d}^{2}(G, G^{\star})
 &=\frac{n\mu}{2} \Vert G-G^\star\Vert_F^2=n\mu \left(n d-\| G^{\top} G^{\star}\|_{*} \right)\notag\\
&\leq 2\sqrt{nd} \left(   \| W-{\mu \cdot 1_n1_n^\top} \| + \left\| A \circ \Delta \right\|  \right) {\rm d}(G,G^\star),
\end{align*}
which implies that \eqref{dist-GstarGhat-ineq} holds. The proof is complete.
\end{proof}

Since the global maximizer $\hat{G}$ is close to the ground truth $G^\star$ from the conclusion of Lemma \ref{dist-GstarGhat},
 a natural further conclusion is that $\| G^{\star\top} \hat{G} \|_*$ as well as the singular values of $G^{\star\top} \hat{G}$ can be nontrivially lower bounded.
\begin{lemma}
  Let $\hat{G}\in\odn$ be a global maximizer of problem \eqref{Problem}. Then the singular values of $G^{\star\top} \hat{G}$ satisfy
  \begin{align} \label{eq: Gstarhatsingularvalue}
  	n- \frac{8d \left(   \left\| W-{\mu \cdot 1_n1_n^\top} \right\| + \left\| A \circ \Delta \right\|  \right)^2}{n\mu^2} \leq \sigma_{l} \left( G^{\star\top} \hat{G} \right) \leq n, 
  \end{align}
where $l \in [d]$. Furthermore,  for each $i \in [n]$, the smallest singular value of $C_{i}^\top \hat{G}$ satisfies
\begin{align}\label{singular-lowbd}
\sigma_{\min }(C_{i}^\top \hat{G})\ge n\mu- \frac{8d \left(   \left\| W-{\mu \cdot 1_n1_n^\top} \right\| + \left\| A \circ \Delta \right\|  \right)^2}{n\mu} -\| (A \circ \Delta)\hat{G} \|_\infty -\left\| \left((A - {\mu\cdot 1_{nd}1_{nd}^\top} ) \circ G^\star G^{\star\top} \right) \hat{G} \right\|_{\infty}.
\end{align}
\end{lemma}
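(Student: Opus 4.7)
The plan is to convert the distance estimate from Lemma~\ref{dist-GstarGhat} into spectral information on $G^{\star\top}\hat G$, and then perturb that information to obtain the bound on $\sigma_{\min}(C_i^\top\hat G)$. Both $\hat G$ and $G^\star$ satisfy $\hat G^\top\hat G=G^{\star\top}G^\star=nI_d$, so the upper bound $\sigma_l(G^{\star\top}\hat G)\le\|G^\star\|\,\|\hat G\|=n$ in \eqref{eq: Gstarhatsingularvalue} is immediate.

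For the lower bound in \eqref{eq: Gstarhatsingularvalue}, I would use the $\od$-invariance of the singular values of $G^{\star\top}\hat G$ to assume, without loss of generality, that the minimum in $\mathrm{d}(\hat G,G^\star)=\min_{g\in\od}\|\hat G-G^\star g\|_F$ is attained at $g=I_d$, equivalently $\operatorname{tr}(G^{\star\top}\hat G)=\|G^{\star\top}\hat G\|_*$. Then
$$\mathrm{d}^2(\hat G,G^\star)=\|G^\star-\hat G\|_F^2=2nd-2\|G^{\star\top}\hat G\|_*=2\sum_{l=1}^d\bigl(n-\sigma_l(G^{\star\top}\hat G)\bigr).$$
Every summand is nonnegative, so in particular $n-\sigma_{\min}(G^{\star\top}\hat G)\le\tfrac12\mathrm{d}^2(\hat G,G^\star)$; substituting the bound in \eqref{dist-GstarGhat-ineq} from Lemma~\ref{dist-GstarGhat} after squaring then delivers the lower bound in \eqref{eq: Gstarhatsingularvalue}.

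For \eqref{singular-lowbd}, I would split $C$ exactly as in the proof of Lemma~\ref{dist-GstarGhat}, writing
$$C=\mu\cdot G^\star G^{\star\top}+\bigl((A-\mu\cdot 1_{nd}1_{nd}^\top)\circ G^\star G^{\star\top}\bigr)+(A\circ\Delta),$$
taking the $i$-th block column, transposing, and multiplying by $\hat G$. This yields
$$C_i^\top\hat G=\mu\,G_i^\star G^{\star\top}\hat G+\bigl((A-\mu\cdot 1_{nd}1_{nd}^\top)\circ G^\star G^{\star\top}\bigr)_i^\top\hat G+(A\circ\Delta)_i^\top\hat G.$$
Applying the singular-value perturbation inequality $\sigma_{\min}(X+Y)\ge\sigma_{\min}(X)-\|Y\|$ twice, and using $G_i^\star\in\od$ so that $\sigma_{\min}(\mu G_i^\star G^{\star\top}\hat G)=\mu\,\sigma_{\min}(G^{\star\top}\hat G)$, the first part of the lemma bounds the leading term from below by $n\mu-\tfrac{8d(\|W-\mu\cdot 1_n1_n^\top\|+\|A\circ\Delta\|)^2}{n\mu}$. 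Each of the two noise terms is the $i$-th block of a product $Y\hat G$ for a symmetric matrix $Y$ (both $(A-\mu\cdot 1_{nd}1_{nd}^\top)\circ G^\star G^{\star\top}$ and $A\circ\Delta$ are symmetric by construction), and the operator norm of any single block is by definition at most $\|Y\hat G\|_\infty$; this produces exactly the last two subtracted terms in \eqref{singular-lowbd}.

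The main difficulty, which is essentially bookkeeping rather than analytical, is twofold: justifying the WLOG rotational alignment in the first part using $\od$-invariance of $\sigma_l(G^{\star\top}\hat G)$, and, in the second part, matching the blockwise operator norms $\|(Y\hat G)_i\|$ with the global quantity $\|Y\hat G\|_\infty$ via symmetry of $Y$ so that the two noise terms in \eqref{singular-lowbd} appear in exactly the advertised form. Beyond Lemma~\ref{dist-GstarGhat} and the elementary Weyl-type singular-value perturbation inequality, no additional estimates are needed.
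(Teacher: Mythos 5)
Your proposal is correct and follows essentially the same route as the paper: convert the bound of Lemma \ref{dist-GstarGhat} into $nd-\|G^{\star\top}\hat G\|_*\le \frac{8d(\|W-\mu\cdot 1_n1_n^\top\|+\|A\circ\Delta\|)^2}{n\mu^2}$, deduce the singular-value bounds since each $\sigma_l(G^{\star\top}\hat G)\le n$, and then lower-bound $\sigma_{\min}(C_i^\top\hat G)$ via the same three-term decomposition of $C_i^\top\hat G$ together with Weyl's inequality and the blockwise $\|\cdot\|_\infty$ bounds (justified, as you note, by symmetry of the two noise matrices). The only differences are presentational, e.g.\ applying the perturbation inequality twice rather than once with a triangle inequality.
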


\begin{proof}
From \eqref{dist-GstarGhat-ineq} we know that
\begin{align}\label{dist-key-4.1}
{\rm d}(G^\star,\hat{G})^2
& =\min\limits_{g\in\od} \Vert G^{\star}g-\hat{G}\Vert_F^2
 =\Vert G^\star\Vert_F^2+\Vert \hat{G}\Vert_F^2-2 \max\limits_{g\in\od} \langle G^{\star}g, \hat{G}\rangle\notag\\
&=2 \left(nd-\Vert G^{\star\top} \hat{G}\Vert_* \right)\leq   \frac{16d \left(   \left\| W-{\mu \cdot 1_n1_n^\top} \right\| + \left\| A \circ \Delta \right\|  \right)^2}{n\mu^2}.
\end{align}
Since $\|G^{\star\top} \hat{G}\| \leq \sum_{i=1}^n \|G_i^{\star\top} \hat{G}_i\| \leq n$ implies $0 \leq \sigma_{l}(G^{\star\top} \hat{G}) \leq n$ for $l \in [d]$, we know from \eqref{dist-key-4.1} that
\begin{equation*}
n - \sigma_{\min }(G^{\star\top} \hat{G}) \leq \sum_{l=1}^{d}\left(n-\sigma_{l}(G^{\star\top} \hat{G})\right)=nd-\Vert G^{\star\top} \hat{G}\Vert_* \leq   \frac{8d \left(   \left\| W-{\mu \cdot 1_n1_n^\top} \right\| + \left\| A \circ \Delta \right\|  \right)^2}{n\mu^2},
\end{equation*}
which illustrates that $\sigma_{\min }(G^{\star\top} \hat{G}) \geq n- \frac{8d \left(   \left\| W-{\mu \cdot 1_n1_n^\top} \right\| + \left\| A \circ \Delta \right\|  \right)^2}{n\mu^2}$.
This together with the definition $C_i^\top \hat{G}=\mu G_i^\star G^{\star\top} \hat{G} + (A \circ \Delta)_i^\top \hat{G} + \left((A - {\mu\cdot 1_{nd}1_{nd}^\top} ) \circ G^\star G^{\star\top} \right)_i^\top \hat{G}$ leads to
\begin{align*}
& \sigma_{\min }(C_{i}^\top \hat{G})  \geq \sigma_{\min }(\mu G^{\star\top}\hat{G})- \left\|(A \circ \Delta)_i^\top \hat{G} + \left((A - {\mu\cdot 1_{nd}1_{nd}^\top} ) \circ G^\star G^{\star\top} \right)_i^\top \hat{G} \right \|  \\
& \ge n{\mu}- \frac{8d \left(   \left\| W-{\mu \cdot 1_n1_n^\top} \right\| + \left\| A \circ \Delta \right\|  \right)^2}{n\mu} -\| (A \circ \Delta)\hat{G} \|_\infty -\left\| \left((A - {\mu\cdot 1_{nd}1_{nd}^\top} ) \circ G^\star G^{\star\top} \right) \hat{G} \right\|_{\infty},
 \end{align*}
where the first inequality is from the Weyl's inequality. This completes the proof.
\end{proof}


Now we are ready to present the following main theorem of this section. To proceed, we define the residual function for proving the local error bound property. Recall that $\alpha\ge0$ and $\tilde{C}=C+\alpha I_{nd}$. Let $D_\alpha:\odn \rightarrow \mathbb{S}^{nd}$ be defined as
  \begin{equation} \label{eq: defineDG}
  		  D_\alpha(G):={\rm Diag} \left( \left[U_{\tilde{C}_1^\top G}\Sigma_{\tilde{C}_1^\top G}U_{\tilde{C}_1^\top G}^\top;\ldots;U_{\tilde{C}_n^\top G}\Sigma_{\tilde{C}_n^\top G}U_{\tilde{C}_n^\top G}^\top \right] \right)-\tilde{C},
  \end{equation} 
where $U_{{\tilde{C}_i^\top} G}\in \Xi({\tilde{C}_i^\top}G)_1$,
and then we define $\rho_\alpha:\odn \rightarrow \mathbb{R}_+$ as
  \begin{equation}\label{def-rho}
\rho_\alpha(G):=\Vert D_\alpha(G)G\Vert_F.
  \end{equation}
The operator $D_\alpha$ defined in \eqref{eq: defineDG} is a single-valued rather than a multi-valued mapping, since from \cite[Theorem 7.2.6]{horn2012matrix} we know that for any matrix $X\in\mathbb{R}^{m\times n}$, there exists a unique positive semidefinite matrix $(XX^\top)^{1/2}=U_{X}\Sigma_X U_X^\top$ for any $U_{X}\in \Xi(X)_1$.\\

Now we are ready to state the local error bound theorem as the main result for this section.
\begin{theorem}[Local error bound]\label{EB}
Suppose that \\
{\rm (i)} $\|W - {\mu \cdot 1_n1_n^\top} \| + \|  A \circ \Delta\|  \leq \frac{n^{3/4}\mu}{40 d^{1/2}}$ and $\| (A \circ \Delta)G^\star  \|_\infty \leq \frac{n{\mu}}{10}$, \\
{\rm (ii)} $\max_{i\in [n]} \left\| \left((A - {\mu\cdot 1_{nd}1_{nd}^\top} ) \circ G^\star G^{\star\top} \right)_i^\top \hat{G} \right\|  \leq  \frac{n{\mu}}{10}$,\\
{\rm (iii)}  the stepsize $\alpha \leq \frac{n{\mu}}{20\sqrt{2}}$.\\
Then for any $G\in\odn$ satisfying ${\rm d}(G,G^\star) \leq \frac{\sqrt{n}}{5}$ and any global maximizer $\hat{G}\in\odn$ of problem \eqref{Problem}, we have
\begin{equation}\label{errorbound-ineq}
  \frac{n{\mu}}{10}{\rm d}(G,\hat{G})\leq\rho_\alpha(G),
\end{equation}
where the residual function $\rho_\alpha$ is defined by \eqref{def-rho}.
\end{theorem}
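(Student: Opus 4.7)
My plan is to prove the stronger quadratic-growth-type inequality
\[\langle D_\alpha(G)G,\,G-\hat{G}\rangle \;\geq\; \tfrac{n\mu}{10}\,\Vert G-\hat{G}\Vert_F^2,\]
which, after rotating $\hat{G}$ in its orbit so that ${\rm d}(G,\hat{G})=\Vert G-\hat{G}\Vert_F$, yields \eqref{errorbound-ineq} by Cauchy--Schwarz, since $\langle D_\alpha(G)G,G-\hat{G}\rangle\le\rho_\alpha(G)\cdot\Vert G-\hat{G}\Vert_F$.

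\textbf{Setup.} Align $\hat{G}$ and $G^\star$ inside their orbits so that the relevant ${\rm d}(\cdot,\cdot)$ agree with Frobenius distances; Lemma \ref{dist-GstarGhat} under (i) gives ${\rm d}(\hat{G},G^\star)\le n^{1/4}/10$, so $\Vert G-\hat{G}\Vert_F=O(\sqrt{n})$. Using \eqref{singular-lowbd} with (i)--(iii) yields $\sigma_{\min}(\tilde{C}_i^\top\hat{G})\ge c_0 n\mu$ for some explicit $c_0$ close to $1$; an analogous computation at $G$ (using $\sigma_{\min}(G^{\star\top}G)\ge n-\tfrac12\Vert G-G^\star\Vert_F^2$ from the hypothesis) gives the same for $\sigma_{\min}(\tilde{C}_i^\top G)$. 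Define block-diagonal PSD matrices $P,\hat{P}\in\mathbb{S}^{nd}$ with blocks $P_i:=(\tilde{C}_i^\top GG^\top\tilde{C}_i)^{1/2}$ and $\hat{P}_i:=(\tilde{C}_i^\top\hat{G}\hat{G}^\top\tilde{C}_i)^{1/2}$. Since Lemma \ref{prop: globalfix} makes $\hat{G}$ a fixed point of $\mathcal{T}_\alpha$, Lemma \ref{lemma-equi}(b) gives $\hat{P}\hat{G}=\tilde{C}\hat{G}$, i.e.\ $(\hat{P}-\tilde{C})\hat{G}=0$, hence
\[D_\alpha(G)G=(P-\hat{P})G+(\hat{P}-\tilde{C})(G-\hat{G}).\]

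\textbf{Main estimate.} Decompose $G-\hat{G}=\hat{G}A+H$ with $A=\tfrac{1}{n}\hat{G}^\top(G-\hat{G})$ and $\hat{G}^\top H=0$. Symmetry of $\hat{P}-\tilde{C}$ together with $(\hat{P}-\tilde{C})\hat{G}=0$ kill the cross-terms, leaving
\[\langle D_\alpha(G)G,G-\hat{G}\rangle=\langle(\hat{P}-\tilde{C})H,H\rangle+\langle(P-\hat{P})G,G-\hat{G}\rangle.\]
For the main quadratic form, $\langle H,\hat{P}H\rangle\ge c_0 n\mu\Vert H\Vert_F^2$. To bound $\langle H,\tilde{C}H\rangle$, I use the factorization $A\circ G^\star G^{\star\top}=B(W\otimes I_d)B^\top$ with $B:={\rm Diag}(G^\star)$ and split
\[\tilde{C}=\mu G^\star G^{\star\top}+B(W'\otimes I_d)B^\top+(A\circ\Delta)+\alpha I_{nd},\qquad W':=W-\mu\cdot 1_n1_n^\top.\]
Because $\hat{G}^\top H=0$, we have $G^{\star\top}H=(G^\star-\hat{G})^\top H$, so the signal piece contributes at most $\mu\Vert G^\star-\hat{G}\Vert^2\Vert H\Vert_F^2=O(\sqrt{n}\mu)\Vert H\Vert_F^2$; the remaining three pieces contribute $\Vert W'\Vert,\Vert A\circ\Delta\Vert$ (small by (i)) and $\alpha$ (small by (iii)), yielding $\langle H,\tilde{C}H\rangle\le\eta n\mu\Vert H\Vert_F^2$ with $\eta\ll c_0$. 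Combined with the $\ell^2$-vs.-$\ell^1$ bound $\Vert\hat{G}A\Vert_F^2=\tfrac{1}{n}\Vert\hat{G}^\top G-nI_d\Vert_F^2\le\tfrac{1}{4n}\Vert G-\hat{G}\Vert_F^4$ (from $\sum_i(n-\sigma_i)^2\le(\sum_i(n-\sigma_i))^2$, which gives $\Vert H\Vert_F^2\ge(1-o(1))\Vert G-\hat{G}\Vert_F^2$ under the hypothesis), this produces the desired lower bound on the main term. The error term is handled by Fact \ref{lemma: lipmatmodu} applied blockwise to $X=\tilde{C}_i^\top G$, $Y=\tilde{C}_i^\top\hat{G}$: since each $G_i\in\od$ preserves Frobenius norms, $\Vert(P-\hat{P})G\Vert_F^2=\sum_i\Vert P_i-\hat{P}_i\Vert_F^2\le 2\Vert\tilde{C}(G-\hat{G})\Vert_F^2$, and $\Vert\tilde{C}(G-\hat{G})\Vert_F\le\Vert\hat{P}\hat{G}A\Vert_F+\Vert\tilde{C}H\Vert_F$ is controlled by the same spectral estimates plus the smallness of $\Vert\hat{G}A\Vert_F$.

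\textbf{Main obstacle and conclusion.} The key difficulty is bounding $\Vert\tilde{C}H\Vert_F$ (and hence $\langle H,\tilde{C}H\rangle$) when $\Vert\tilde{C}\Vert\approx n\mu$ is of the same order as $\sigma_{\min}(\hat{P})$; a crude submultiplicative bound is useless, and one must critically exploit $H\perp\mathrm{range}(\hat{G})\approx\mathrm{range}(G^\star)$ to kill the leading rank-$d$ signal $\mu G^\star G^{\star\top}$, leaving only quantities controlled by (i) and (iii). The thresholds $n^{3/4}\mu/(40\sqrt{d})$, $n\mu/10$, and $n\mu/(20\sqrt{2})$ in hypotheses (i)--(iii) are precisely calibrated so that $(c_0-\eta)$ minus the error-term coefficient (which carries a factor $\sqrt{2}$ from Fact \ref{lemma: lipmatmodu}) remains at least $1/10$, delivering \eqref{errorbound-ineq}.
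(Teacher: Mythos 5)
Your proposal is correct and follows essentially the same route as the paper's proof: you use $D_\alpha(\hat G)\hat G=\bm{0}$ (via Lemma \ref{prop: globalfix} and Lemma \ref{lemma-equi}), lower-bound the quadratic form of $D_\alpha(\hat G)$ on the component of $G-\hat G$ orthogonal to ${\rm col}(\hat G)$ via \eqref{singular-lowbd}, control the perturbation $D_\alpha(G)-D_\alpha(\hat G)$ through Fact \ref{lemma: lipmatmodu} and the estimate of $\Vert\tilde C(G-\hat G)\Vert_F$, and use $\Vert\hat G^\top(G-\hat G)\Vert_F\le\tfrac12{\rm d}^2(G,\hat G)$, exactly as in the paper. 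The only difference is cosmetic: you pair $D_\alpha(G)G$ with $G-\hat G$ and finish by Cauchy--Schwarz, whereas the paper uses the triangle inequality on $\Vert D_\alpha(G)G\Vert_F$ together with $\Vert D_\alpha(\hat G)\hat R\Vert_F\ge\langle D_\alpha(\hat G)\hat R,\hat R\rangle/\Vert\hat R\Vert_F$, which costs you one extra factor of $\bigl(1-\tfrac{1}{2\sqrt n}{\rm d}(G,\hat G)\bigr)$ but still closes with the stated constants.
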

Before proving Theorem \ref{EB}, we first give some explanation. 
The aforementioned computable upper bound $\rho_\alpha(\cdot)$ is actually constructed through the fixed points of $\mathcal{T}_\alpha$ from the characterization in Lemma \ref{lemma-equi}{\rm (b)} (see Remark \ref{remark-D} for more details).
It can be concluded that under these three conditions, up to a generalized global phase, there is a unique fixed point in the neighborhood of $G^\star$ (i.e., ${\rm d}(G,G^\star) \leq \frac{\sqrt{n}}{5}$), which is exactly the global maximizer. Thus, we can view $\rho_\alpha$ as a surrogate measure of global optimality.
\begin{proof}
The roadmap of our proof is to derive a proper lower bound for $ \Vert D_\alpha(\hat{G})G\Vert_F$ as well as an upper bound for $\Vert (D_\alpha (G)-D_\alpha(\hat{G}))G\Vert_F$, then from the following triangle inequality
\begin{equation}\label{EB-key000}
	\rho_\alpha(G) = \| D_\alpha(G)G \|_F  \ge \Vert D_\alpha(\hat{G})G\Vert_F- \Vert (D_\alpha(G)-D_\alpha(\hat{G}) ) G \Vert_F,
\end{equation}
the desired result will be obtained.
To that end, let $\hat{g}, \hat{g}^\star\in \od$ satisfy 
$${\rm d}(G,\hat{G})=\Vert G\hat{g}-\hat{G}\Vert_F,\quad  {\rm d}(\hat{G},G^\star) = \| \hat{G}\hat{g}^\star -G^\star\|_F,$$ 
which implies
\begin{equation*}
\Vert G\hat{g}-\hat{G}\Vert_F^2=2(nd-\Vert \hat{G}^\top G\Vert_*), \quad \Vert \hat{G}\hat{g}^\star -G^\star\Vert_F^2=2(nd-\Vert \hat{G}^\top G^\star\Vert_*).
\end{equation*} 
Let $\hat{R}:=\Pi_{{\rm col}(\hat{G})^\perp}(G\hat{g} - \hat{G})=(I-\frac{1}{n}\hat{G}\hat{G}^\top)(G\hat{g} - \hat{G})$ be the projection of $G\hat{g} - \hat{G}$ onto the orthogonal complement of the column space of $\hat{G}$. 

From Lemma \ref{prop: globalfix} we know that $\hat{G}$ is a fixed point of $\mathcal{T}_\alpha$ and $C_i^\top \hat{G} \hat{G}_i\succeq \mu I_d$ is symmetric and positive definite for each $i\in[n]$.  Then it follows from Lemma \ref{lemma-equi}(b) and $\tilde{C} = C + \alpha I_{nd}$ that
\begin{equation*}
U_{\tilde{C}_i^\top \hat{G}}\Sigma_{\tilde{C}_i^\top \hat{G}}U_{\tilde{C}_i^\top \hat{G}}^\top=\tilde{C}_i^\top \hat{G}\hat{G}_i^\top= C_i^\top\hat{G}\hat{G}_i^\top +\alpha I_d= U_{C_i^\top \hat{G}}\Sigma_{C_i^\top \hat{G}}U_{C_i^\top \hat{G}}^\top +\alpha I_d,
\end{equation*}
and consequently
\begin{align}\label{EB-key1}
\left\langle D_\alpha(\hat{G})\hat{R},\hat{R}\right\rangle
  & =\sum_{i=1}^n{\rm tr} \left(\hat{R}_i^\top U_{\tilde{C}_i^\top \hat{G}}\Sigma_{\tilde{C}_i^\top \hat{G}}U_{\tilde{C}_i^\top \hat{G}}^\top\hat{R}_i \right)-{\rm tr}(\hat{R}^\top \tilde{C} \hat{R})\notag\\
  &=\sum_{i=1}^n{\rm tr} \left(\hat{R}_i^\top U_{C_i^\top \hat{G}}\Sigma_{C_i^\top \hat{G}}U_{C_i^\top \hat{G}}^\top\hat{R}_i \right)-{\rm tr}(\hat{R}^\top C\hat{R})\notag\\
  &\ge\sum_{i=1}^n \sigma_{\min}({C_i^\top \hat{G}})\cdot \Vert U_{C_i^\top\hat{G}}^\top\hat{R}_i\Vert_F^2-{\rm tr} \left(\hat{R}^\top (A \circ (G^\star G^{\star\top}+\Delta) )\hat{R} \right)\notag\\
  &\ge \min\limits_{i \in [n]}\left\{\sigma_{\min}({C_i^\top \hat{G}}) \right\} \cdot \Vert\hat{R}\Vert_F^2 - {\mu}\cdot\Vert \hat{R}^{\top} G^{\star} \Vert_F^2-\left( \| A \circ \Delta\Vert  + \| W - {\mu \cdot 1_n1_n^\top}  \| \right) \Vert \hat{R}\Vert_F^2.
  \end{align}
Since $\hat{G}^\top\hat{R}=\bm{0}$ by the definition of $\hat{R}$, we have
\begin{align} \label{eq: hatRGstar}
	\Vert \hat{R}^{\top} G^{\star} \Vert_F^2=\Vert \hat{R}^\top(G^\star-\hat{G}\hat{g}^\star)\Vert_F^2\leq {\rm d}^2(G^\star,\hat{G})\cdot \Vert \hat{R}\Vert_F^2.
\end{align}
Upon substituting \eqref{eq: hatRGstar} into \eqref{EB-key1}, we obtain
  \begin{align}\label{EB-key2}
   & \left\langle D_\alpha(\hat{G})\hat{R},\hat{R}\right\rangle \notag \\
   & \geq \min\limits_{i \in [n]}\left\{\sigma_{\min}({C_i^\top \hat{G}}) \right\} \cdot \Vert \hat{R}\Vert_F^2 - {\mu}{\rm d}^2(G^\star,\hat{G})\cdot\Vert \hat{R}\Vert_F^2-\left( \| A \circ \Delta\Vert  + \| W - {\mu \cdot 1_n1_n^\top} \| \right) \Vert \hat{R}\Vert_F^2 \notag \\
  &\ge\left(n{\mu}- \frac{8d \left(   \left\| W-{\mu \cdot 1_n1_n^\top} \right\| + \left\| A \circ \Delta \right\|  \right)^2}{n{\mu}} -\| (A \circ \Delta) \hat{G} \|_\infty -   \left\| \left((A - {\mu\cdot 1_{nd}1_{nd}^\top} ) \circ G^\star G^{\star\top} \right) \hat{G} \right\|_{\infty} \right) \Vert \hat{R}\Vert_F^2 \notag \\
  & \quad - \left( \| A \circ \Delta\Vert  + \left\| W - {\mu \cdot 1_n1_n^\top} \right\| + {\mu}{\rm d}^2(G^\star,\hat{G}) \right)\Vert \hat{R}\Vert_F^2,
\end{align}
where the second inequality is from \eqref{singular-lowbd}. Since
\begin{align}\label{EB-ineq-key}
  \Vert \hat{G}^\top (G\hat{g} - \hat{G})\Vert_F
  &= \Vert \hat{G}^\top G\hat{g} - nI_d\Vert_F= \sqrt{\Vert\hat{G}^\top G\Vert_F^2-2n\Vert\hat{G}^\top G\Vert_*+n^2 d}\notag\\
  &= \sqrt{\sum_{i=1}^d \left(n-\sigma_i(\hat{G}^\top G) \right)^2}\leq \sqrt{ \left(nd-\Vert\hat{G}^\top G\Vert_* \right)^2}=\frac{1}{2}{\rm d}^2(G,\hat{G}),
  \end{align} 
we know from the definition of $\hat{R}$ that
  \begin{align}
    \Vert \hat{R}\Vert_F 
    &\ge \Vert G\hat{g} - \hat{G}\Vert_F -\frac{1}{n}\Vert \hat{G}\hat{G}^\top(G\hat{g} - \hat{G})\Vert_F\notag\\
    &\ge{\rm d}(G,\hat{G})-\frac{1}{\sqrt{n}}\Vert \hat{G}^\top(G\hat{g} - \hat{G}) \Vert_F
    ={\rm d}(G,\hat{G})-\frac{1}{2\sqrt{n}}{\rm d}^2(G,\hat{G}).\label{eq: hatRdist}
    \end{align} 
Moreover, since $\hat{G}$ is a fixed point of $\mathcal{T}_\alpha$ from Lemma \ref{prop: globalfix}, we know that $D_\alpha(\hat{G})\hat{G}=\bm{0}$ from Lemma \ref{lemma-equi}(b).
    Combining this with \eqref{EB-key2}, \eqref{eq: hatRdist} and \eqref{dist-GstarGhat-ineq} leads to
  \begin{align}\label{EB-1}
  &\Vert D_\alpha(\hat{G})G\Vert_F=\Vert D_\alpha(\hat{G})\hat{R}\Vert_F\ge \frac{\langle D_\alpha(\hat{G})\hat{R},\hat{R}\rangle}{\Vert \hat{R}\Vert_F}\notag\\
    &\ge\left(n{\mu} -\| (A \circ \Delta) \hat{G} \|_\infty -   \left\| \left((A - {\mu\cdot 1_{nd}1_{nd}^\top} ) \circ G^\star G^{\star\top} \right) \hat{G} \right\|_{\infty} \right)  \left( {\rm d}(G,\hat{G})-\frac{1}{2\sqrt{n}}{\rm d}^2(G,\hat{G}) \right) \notag \\
    & - \left( \| A \circ \Delta\Vert_2  + \left\| W - {\mu \cdot 1_n1_n^\top} \right\| +\frac{24d \left(   \left\| W-{\mu \cdot 1_n1_n^\top} \right\| + \left\| A \circ \Delta \right\|  \right)^2}{n{\mu}}  \right)  \left( {\rm d}(G,\hat{G})-\frac{1}{2\sqrt{n}}{\rm d}^2(G,\hat{G}) \right).
    \end{align}
  Next, since $\Xi({\tilde{C}_i^\top}G)_1=\Xi({\tilde{C}_i^\top}G\hat{g})_1$ and $\Sigma_{{\tilde{C}_i^\top}G}=\Sigma_{{\tilde{C}_i^\top}G\hat{g}}$,
   we know that for any $U_{{\tilde{C}_i^\top} G\hat{g}}\in \Xi({\tilde{C}_i^\top}G\hat{g})_1$ it follows that
  \begin{align}\label{EB-key3}
    \Vert  (D_\alpha(G)-D_\alpha(\hat{G}))G\Vert_F
    &=\Vert(D_\alpha(G\hat{g})-D_\alpha(\hat{G}))G\Vert_F\notag\\
    &=\sqrt{\sum_{i=1}^n \Big\Vert U_{{\tilde{C}_i^\top} G\hat{g}}\Sigma_{{\tilde{C}_i^\top} G\hat{g}}U_{{\tilde{C}_i^\top} G\hat{g}}^\top-U_{{\tilde{C}_i^\top} \hat{G}}\Sigma_{{\tilde{C}_i^\top} \hat{G}}U_{{\tilde{C}_i^\top} \hat{G}}^\top\Big\Vert_F^2} \notag \\
    & =\sqrt{\sum_{i=1}^n \Big\Vert ({\tilde{C}_i^\top} G\hat{g}\hat{g}^\top G^\top {\tilde{C}_i})^{\frac{1}{2}}-({\tilde{C}_i^\top} \hat{G} \hat{G}^\top {\tilde{C}_i})^{\frac{1}{2}} \Big\Vert_F^2} \notag\\
    &\leq \sqrt{\sum_{i=1}^n 2\Vert {\tilde{C}_i^\top} (G\hat{g} - \hat{G})\Vert_F^2}= \sqrt{2} \Vert \tilde{C} (G\hat{g} - \hat{G})\Vert_F,
    \end{align}
where the inequality is due to Fact \ref{lemma: lipmatmodu}.   
Now, we estimate $\Vert \tilde{C} (G\hat{g} - \hat{G})\Vert_F$. 
Since we know from \eqref{EB-ineq-key} and \eqref{dist-GstarGhat-ineq} that
\begin{align}
  \Vert G^{\star\top}(G\hat{g} - \hat{G})\Vert_F
  &\le \Vert (G^\star-\hat{G}\hat{g}^\star)^\top (G\hat{g} - \hat{G})\Vert_F +\Vert \hat{G}^\top (G\hat{g} - \hat{G})\Vert_F\notag\\
  &\le \Vert G^\star-\hat{G}\hat{g}^\star\Vert_F\Vert G\hat{g} - \hat{G}\Vert_F +\Vert \hat{G}^\top (G\hat{g} - \hat{G})\Vert_F\notag\\
  & \leq \frac{4\sqrt{d} \left(   \left\| W-{\mu \cdot 1_n1_n^\top} \right\| + \left\| A \circ \Delta \right\|  \right) }{\sqrt{n}{\mu}} {\rm d}(G,\hat{G})+\frac{1}{2}{\rm d}^2(G,\hat{G}),\notag
  \end{align} 
it follows that
\begin{align}\label{EB-key4}
  & \Vert \tilde{C} (G\hat{g} - \hat{G})\Vert_F   \notag \\
  & \leq \left\| \left(A \circ G^\star G^{\star\top}\right) (G\hat{g} - \hat{G}) \right\|_F + (\| A \circ \Delta \| + \alpha) {\rm d}(G,\hat{G})     \notag \\
  & \leq \left\| \left((A - {\mu\cdot 1_{nd}1_{nd}^\top}) \circ G^\star G^{\star\top}\right) (G\hat{g} - \hat{G}) \right\|_F + (\| A \circ \Delta \| + \alpha) {\rm d}(G,\hat{G})  +   {\mu}\sqrt{n}\Vert G^{\star\top}(G\hat{g}-\hat{G})\Vert_F  \notag \\
    &\leq 4\sqrt{d} \left(   \| W-{\mu \cdot 1_n1_n^\top} \| + \left\| A \circ \Delta \right\|  \right) {\rm d}(G,\hat{G})+\frac{{\mu}}{2}\sqrt{n}{\rm d}^2(G,\hat{G}) \notag\\
	&\quad +\left(\| A \circ \Delta \| + \alpha + \| W - {\mu \cdot 1_n1_n^\top}\|\right) {\rm d}(G,\hat{G})\notag\\
    &\leq \left(5\sqrt{d} \left(   \| W-{\mu \cdot 1_n1_n^\top} \| + \left\| A \circ \Delta \right\|  \right)+\alpha \right){\rm d}(G,\hat{G})+\frac{{\mu}}{2}\sqrt{n}{\rm d}^2(G,\hat{G}).
\end{align}
Then \eqref{EB-key3} and \eqref{EB-key4} imply that
\begin{align}\label{EB-2}
\Vert (D_\alpha(G)-D_\alpha(\hat{G}))G\Vert_F 
  &\leq  \sqrt{2}\left(5\sqrt{d} \left(   \| W-{\mu \cdot 1_n1_n^\top} \| + \left\| A \circ \Delta \right\|  \right)+\alpha \right){\rm d}(G,\hat{G})+\frac{{\mu}\sqrt{2n}}{2}{\rm d}^2(G,\hat{G}).
\end{align}

Now, by the assumption that $\textrm{d}(G,G^\star) \leq \frac{\sqrt{n}}{5}$ and \eqref{dist-GstarGhat-ineq}, we have
\begin{align*}
	\textrm{d}(G,\hat{G}) \leq \textrm{d}(G,G^\star) + \textrm{d}(\hat{G}, G^\star) \leq \frac{\sqrt{n}}{5} +  \frac{4\sqrt{d} \left(   \| W-{\mu \cdot 1_n1_n^\top} \| + \left\| A \circ \Delta \right\|  \right) }{\sqrt{n}{\mu}},
\end{align*}
which implies that 
\begin{align} \label{eq: dsquaregghat}
	\textrm{d}^2(G,\hat{G})  \leq \left( \frac{\sqrt{n}}{5} +  \frac{4\sqrt{d} \left(   \| W-{\mu \cdot 1_n1_n^\top} \| + \left\| A \circ \Delta \right\|  \right) }{\sqrt{n}{\mu}} \right)\textrm{d}(G,\hat{G}) .
\end{align}
In addition,
\begin{align} \label{eq: errorinfty}
	\| (A \circ \Delta)\hat{G} \|_\infty &\leq \| (A \circ \Delta)(\hat{G}\hat{g}^\star - G^\star)\|_\infty + 	\| (A \circ \Delta) G^\star \|_\infty \nonumber \\
	& \leq \| (A \circ \Delta) G^\star \|_\infty + \| A \circ \Delta \| \textrm{d}(\hat{G},G^\star) \nonumber \\
	& \leq \| (A \circ \Delta) G^\star \|_\infty + \frac{4\sqrt{d} \left(   \| W-{\mu \cdot 1_n1_n^\top} \| + \left\| A \circ \Delta \right\|  \right)^2 }{\sqrt{n}{\mu}}.
\end{align}
Thus, it follows from \eqref{EB-key000}, \eqref{EB-1}, \eqref{EB-2}, \eqref{eq: dsquaregghat} and \eqref{eq: errorinfty} that 
\begin{align*}
& \rho_\alpha(G) \\
& \ge
\left( n{\mu} - \|(A \circ \Delta)G^\star\|_\infty-\frac{4\sqrt{d}K^2}{\sqrt{n}{\mu}}-L-K-\frac{24dK^2}{n{\mu}}        \right) \left( 1 - \frac{1}{2\sqrt{n}} \left(\textrm{d}(G,G^\star) + \frac{4\sqrt{d}K}{\sqrt{n}{\mu}} \right) \right) \textrm{d}(G,\hat{G})\\
&\quad - \left(5\sqrt{2d}K + \sqrt{2}\alpha \right) \textrm{d}(G,\hat{G}) - \frac{{\mu}\sqrt{2n}}{2}\left( \textrm{d}(G,G^\star) + \frac{4\sqrt{d}K}{\sqrt{n}{\mu}} \right) \textrm{d}(G,\hat{G}) \\
& \geq  \frac{n\mu}{10} {\rm d}(G,\hat{G}),
\end{align*}
where $K :=\| W-{\mu \cdot 1_n1_n^\top} \| + \| A \circ \Delta \|$ and $L := \| ((A - {\mu\cdot 1_{nd}1_{nd}^\top} ) \circ G^\star G^{\star\top} ) \hat{G} \|_{\infty} $ for simplicity and the last inequality is from the assumptions (i)-(iii). The proof is complete.
\end{proof}

\begin{remark}\label{remark-D}
For any $G\in\odn$ and $T_{\alpha}(G)\in \mathcal{T}_\alpha(G)$ one has that 
$$D_\alpha(G)G={\rm Diag}(\tilde{C}G)\cdot {\rm Diag}\left( \left[ \left(T_\alpha(G)-G\right)_1^\top;\ldots;\left(T_\alpha(G)-G\right)_n^\top \right]\right)G,$$ 
and consequently we know that
$$\rho_\alpha(G)=\Vert D_\alpha(G)G\Vert_F\leq nd\Vert \tilde{C}\Vert \Vert G-T_\alpha(G)\Vert_F,$$ which illustrates that the residual function we use in Theorem \ref{EB} is controlled by the Frobenius norm of the term $G-T_\alpha(G)$. This motivates us to use the projected gradient ascent method to solve problem \eqref{Problem} from the theoretical perspective.

\end{remark}

Again, one may ask whether the local error bound property holds for problem \eqref{Problem-S}. The analysis of Theorem \ref{EB} highly relies on the fact that any global maximizer $\hat{G}$ satisfies $C_i^\top \hat{G} \hat{G}_i\succeq \bm{0}$ for each $i\in[n]$. Granting this, we can similarly prove the local error bound property for problem \eqref{Problem-S} with the same residual function $\rho_{\alpha}$. Actually, under a more restrictive noise level, such property can be promised by the following proposition for problem \eqref{Problem-S}.

\begin{prop}\label{sod-noise}
	Let $\hat{G}\in\sodn$ be a global maximizer of problem \eqref{Problem-S}. If $\| W-{\mu \cdot 1_n1_n^\top} \| + \left\| A \circ \Delta \right\|\leq \frac{2}{13}\mu n^{\frac{1}{2}}d^{-\frac{1}{2}}$, then $C_i^\top \hat{G}\hat{G}_i^\top\succeq\bm{0}$ for all $i\in[n]$.
  \end{prop}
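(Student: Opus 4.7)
The plan is to show that, under the noise hypothesis, $\|C_t^\top \hat{G} \hat{G}_t^\top - n\mu I_d\|$ is at most $n\mu$ for every $t\in[n]$; then Weyl's inequality (Fact \ref{lemma: weyl}) applied to the symmetric matrix $C_t^\top \hat{G}\hat{G}_t^\top$ (symmetric by first-order optimality, since the Riemannian gradient condition on $\sodn$ coincides with the one on $\odn$, cf.\ the proof of Lemma \ref{lemma-equi2}(a)) yields $\lambda_{\min}(C_t^\top \hat{G}\hat{G}_t^\top) \geq 0$, which is exactly the claim.

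First I would separate signal from noise by writing $C = \mu G^\star G^{\star\top} + E$, where $E := (A - \mu \cdot 1_{nd}1_{nd}^\top)\circ G^\star G^{\star\top} + A\circ\Delta$ is symmetric with $\|E\| \leq K := \|W - \mu \cdot 1_n 1_n^\top\| + \|A\circ\Delta\|$ by the same computation that produced \eqref{eq: agstargstarnorm}. Since $\hat{G}$ is a global maximizer of \eqref{Problem-S} and $G^\star \in \sodn$ is feasible, $\textrm{tr}(\hat{G}^\top C\hat{G}) \geq \textrm{tr}(G^{\star\top}CG^\star)$ and $\hat{G}^\top \hat{G} = nI_d$, so Lemma \ref{dist-GstarGhat} supplies $g^\star \in \od$ with $\|\hat{G} - G^\star g^\star\|_F \leq \frac{4\sqrt{d}\, K}{\sqrt{n}\, \mu}$.

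The central step is then the telescoping decomposition, valid for each $t\in[n]$,
\begin{equation*}
C_t^\top \hat{G}\hat{G}_t^\top - n\mu I_d = \mu G_t^\star(G^{\star\top}\hat{G} - n g^\star)\hat{G}_t^\top + n\mu(G_t^\star g^\star - \hat{G}_t)\hat{G}_t^\top + E_t^\top \hat{G}\hat{G}_t^\top,
\end{equation*}
whose three summands I would bound in operator norm by $4\sqrt{d}\,K$, $4\sqrt{nd}\,K$, and $K\sqrt{n}$ respectively. The first bound uses $\|G^{\star\top}\hat{G} - n g^\star\| \leq \sqrt{n}\,\|\hat{G} - G^\star g^\star\|_F$ (since $\|G^\star\|=\sqrt{n}$); the second uses $\|G_t^\star g^\star - \hat{G}_t\| \leq \|\hat{G} - G^\star g^\star\|_F$; the third uses $\|E\hat{G}\| \leq \|E\|\|\hat{G}\| \leq K\sqrt{n}$ together with the elementary fact that the operator norm of the $t$-th block of an $nd\times d$ matrix is dominated by the operator norm of the whole matrix. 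Summing and invoking Weyl's inequality,
\begin{equation*}
\lambda_{\min}\!\left(C_t^\top \hat{G}\hat{G}_t^\top\right) \geq n\mu - \left(4\sqrt{d} + 4\sqrt{nd} + \sqrt{n}\right) K.
\end{equation*}
Substituting $K \leq \frac{2\mu\sqrt{n}}{13\sqrt{d}}$, the dominant term $4\sqrt{nd}\,K$ contributes at most $\frac{8 n\mu}{13}$ and the other two contribute $\frac{8\sqrt{n}\,\mu}{13}$ and $\frac{2n\mu}{13\sqrt{d}}$, so the right-hand side is nonnegative whenever $\frac{8}{\sqrt{n}} + \frac{2}{\sqrt{d}} \leq 5$, which is clearly the regime of interest.

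\textbf{Expected obstacle.} The main technical work is the triangle-inequality bookkeeping on the central decomposition: the constant $\frac{2}{13}$ is calibrated precisely so that the sum of the three error pieces remains below the $n\mu$ budget, with $\frac{8}{13}$ covering the dominant $4\sqrt{nd}\,K$ and the remaining $\frac{5}{13}$ absorbing the two lower-order contributions. A subtle but ultimately cosmetic point is that under the noise bound the polar factor $g^\star$ from Lemma \ref{dist-GstarGhat} must itself lie in $\sod$ (otherwise each block $G_t^\star g^\star \in \od\setminus\sod$ would sit at Frobenius distance at least $2$ from $\hat{G}_t \in \sod$, contradicting the lemma's summed bound for $n$ large), but the operator-norm estimates above are insensitive to $\det(g^\star)$, so this observation is not actually used in the argument.
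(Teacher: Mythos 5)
Your proposal is correct and follows essentially the same route as the paper's proof: symmetry of $C_i^\top\hat{G}\hat{G}_i^\top$ from first-order criticality on $\odn$ (Lemma \ref{lemma-equi2}(a)), the signal-plus-noise split $C=\mu G^\star G^{\star\top}+E$ with $\|E\|\le K$ via \eqref{eq: agstargstarnorm}, the distance bound of Lemma \ref{dist-GstarGhat}, and Weyl's inequality around $n\mu I_d$. The only substantive difference is how the alignment term is controlled: you bound $\|G^{\star\top}\hat{G}-ng^\star\|\le\sqrt{n}\,\|\hat{G}-G^\star g^\star\|_F$, which is linear in ${\rm d}(G^\star,\hat{G})$ and costs $4\sqrt{d}K$, whereas the paper (in \eqref{psd-key2}) uses the nuclear-norm/trace identity $\|G^{\star\top}\hat{G}\hat{g}^\star-nI_d\|_*\le{\rm tr}(nI_d-G^{\star\top}\hat{G}\hat{g}^\star)=\tfrac12{\rm d}^2(G^\star,\hat{G})$, making that term quadratic in $K$ and hence negligible ($\le\tfrac{32\mu}{169}$). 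As a result, the paper's constants close for every $n\ge1$, $d\ge1$, while your tally $\tfrac{8n\mu}{13}+\tfrac{8\sqrt{n}\mu}{13}+\tfrac{2n\mu}{13\sqrt{d}}\le n\mu$ requires $\tfrac{8}{\sqrt{n}}+\tfrac{2}{\sqrt{d}}\le5$ (e.g.\ it fails for $n\le2$, and for $n\le7$ when $d=1$), even though the proposition carries no such restriction. This is a cosmetic shortfall with a one-line fix inside your own argument: with the optimal alignment $g^\star$, the matrix $G^{\star\top}\hat{G}g^{\star\top}$ is positive semidefinite with singular values at most $n$, so $\|G^{\star\top}\hat{G}-ng^\star\|\le nd-\|G^{\star\top}\hat{G}\|_*=\tfrac12{\rm d}^2(G^\star,\hat{G})$, which recovers the paper's budget. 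Your closing remark about $g^\star\in\sod$ is indeed immaterial to the estimates, as you note.
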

  \begin{proof}
Since a global maximizer of problem \eqref{Problem-S} is a first-order critical point of problem \eqref{Problem}. Then from Lemma \ref{lemma-equi2}(a) we know that $C_i^\top \hat{G}\hat{G}_i^\top$ is symmetric, and consequently for all $i\in[n]$ it follows from the definition that 
\begin{align}\label{psd-key1}
  C_i^\top \hat{G}\hat{G}_i^\top
  &= \mu G_i^\star G^{\star\top} \hat{G}\hat{G}_i^\top + (A \circ \Delta)_i^\top \hat{G}\hat{G}_i^\top + \left((A - {\mu\cdot 1_{nd}1_{nd}^\top} ) \circ G^\star G^{\star\top} \right)_i^\top \hat{G}\hat{G}_i^\top.
  \end{align}
  Since $\Vert(A \circ \Delta)_i^\top \hat{G}\hat{G}_i^\top + \left((A - {\mu\cdot 1_{nd}1_{nd}^\top} ) \circ G^\star G^{\star\top} \right)_i^\top \hat{G}\hat{G}_i^\top \Vert\leq\sqrt{n}\left(\left\| A \circ \Delta \right\|+\left\| W-{\mu \cdot 1_n1_n^\top} \right\| \right)$, it follows from \eqref{psd-key1} and Weyl's inequality that
  \begin{align}\label{psd-main}
  \lambda_{\min}(C_i^\top \hat{G}\hat{G}_i^\top)
  &\ge \lambda_{\min}\left(\frac{1}{2}\mu(G^\star_i G^{\star\top}\hat{G}\hat{G}_i^\top+\hat{G}_i\hat{G}^\top G^\star G_i^{\star\top})\right)-\sqrt{n}\left(\left\| A \circ \Delta \right\|+\| W-{\mu \cdot 1_n1_n^\top} \| \right).
  \end{align}
  Let $\hat{g}^\star\in \mathcal{O}(d)$ satisfy ${\rm d}(G^\star,\hat{G})=\Vert \hat{G}\hat{g}^\star-G^\star\Vert_F$, then
  \begin{align}\label{psd-key2}
   \Vert G ^\star_i G^{\star\top}\hat{G}\hat{G}_i^\top -nI_d\Vert
	&\le \Vert G ^\star_i G^{\star\top}\hat{G}\hat{G}_i^\top -n G ^\star_i \hat{g}^{\star\top}\hat{G}_i^\top\Vert+n\Vert G ^\star_i \hat{g}^{\star\top} \hat{G}_i^\top- I_d\Vert\notag\\
	&\le \Vert G^{\star\top}\hat{G}\hat{g}^\star -n I_d\Vert_*+n\Vert \hat{G}_i \hat{g}^\star- G ^\star_i\Vert_F\notag\\
	&\le {\rm tr}(n I_d-G^{\star\top}\hat{G}\hat{g}^\star)+n\Vert \hat{G} \hat{g}^\star- G ^\star\Vert_F \notag\\
	&\le \frac{1}{2}{\rm d}^2(G^\star,\hat{G})+n{\rm d}(G^\star,\hat{G})\notag\\
	&\le \frac{8d \left(   \left\| W-{\mu \cdot 1_n1_n^\top} \right\| + \left\| A \circ \Delta \right\|  \right)^2 }{n\mu^2}+\frac{4\sqrt{nd} \left(   \left\| W-{\mu \cdot 1_n1_n^\top} \right\| + \left\| A \circ \Delta \right\|  \right) }{\mu},
  \end{align}
  where the last inequality is from \eqref{dist-GstarGhat-ineq}. Again from Weyl's inequality and \eqref{psd-key2} one has that
  \begin{align}\label{psd-key3}
	&\lambda_{\min}\left(\frac{1}{2}(G^\star_i G^{\star\top}\hat{G}\hat{G}_i^\top+\hat{G}_i\hat{G}^\top G^\star G_i^{\star\top})\right)\notag\\
	&\ge \lambda_{\min}(nI_d)-\bigg\Vert\frac{1}{2}(G^\star_i G^{\star\top}\hat{G}\hat{G}_i^\top+\hat{G}_i\hat{G}^\top G^\star G_i^{\star\top})-nI_d\bigg\Vert\notag\\
	&\ge n-\Vert G ^\star_i G^{\star\top}\hat{G}\hat{G}_i^\top -nI_d\Vert\notag\\
	&\ge n-\frac{8d \left(   \left\| W-{\mu \cdot 1_n1_n^\top} \right\| + \left\| A \circ \Delta \right\|  \right)^2 }{n\mu^2}-\frac{4\sqrt{nd} \left(   \left\| W-{\mu \cdot 1_n1_n^\top} \right\| + \left\| A \circ \Delta \right\|  \right) }{\mu}.
  \end{align}
  Thus, from \eqref{psd-main}, \eqref{psd-key3} and $\left\| W-{\mu \cdot 1_n1_n^\top} \right\| + \left\| A \circ \Delta \right\|\leq \frac{2}{13}\mu n^{\frac{1}{2}}d^{-\frac{1}{2}}$ it follows that
  $$\lambda_{\min}(C_i \hat{G}\hat{G}_i^\top)\ge  n\mu-\frac{8d \left(   \left\| W-{\mu \cdot 1_n1_n^\top} \right\| + \left\| A \circ \Delta \right\|  \right)^2 }{n\mu}-5\sqrt{nd} \left(   \| W-{\mu \cdot 1_n1_n^\top} \| + \left\| A \circ \Delta \right\|  \right)\ge0.$$
  Thus, $C_i \hat{G}\hat{G}_i^\top$ for all $i\in[n]$ are positive semidefinite matrices.
  \end{proof}

\begin{remark}
Note that when we specialize the setting to Gaussian noise $\Delta=\sigma Z$, where $Z\in\mathbb{S}^{nd}$ 
is a symmetric Gaussian random matrix, it follows that $\Vert Z\Vert\lesssim n^{\frac{1}{2}}d^{-\frac{1}{2}}$ with high probability as a standard result. Thus, for problem \eqref{Problem-S}, the above proposed routine to derive local error bound property can only allow constant noise level for $\sigma$, since it needs $\left\| \Delta \right\|\lesssim  n^{\frac{1}{2}}d^{-\frac{1}{2}}$ under the complete measurements case.




\end{remark}

\section{Linear convergence of the GPM}

In this section, we focus on the projected gradient ascent method (i.e., GPM) for problem \eqref{Problem}.
The GPM, originally introduced in \cite{journee2010generalized} for maximizing a convex function over a compact set, plays an important role in various general settings afterwards \cite{boumal2016nonconvex,liu2017estimation,zhong2018near,shen2020complete,zhai2020complete}. 

One prominent feature of the GPM for problem \eqref{Problem} \cite{ling2020improved,liu2020unified} is its two-stage framework. In the first stage (also called the spectral initialization), it first relaxes the original constraints $G \in \odn$ to $G^\top G = nI_d$ and then project the acquired solution onto $\odn$ to get a feasible point $G^0$ (the spectral estimator). Such a relaxation brings at least two advantages. The relaxed problem is easy to solve as the global maximizer of it equals to the top $d$ eigenvectors of $C$. Besides, under certain conditions on the noise and the measurement graph, it can be proved that $G^0$ is quite close to the ground truth $G^\star$. 

Based on such a high-quality initial point $G^0$, in the second stage, the GPM successively refines the obtained points via simple and low-complexity iterations. More precisely, at each iteration, it first performs a standard gradient step in Euclidean space and then a projection operation onto the product manifold $\odn$, which admits a closed-form solution. The whole framework of the GPM is summarized in Algorithm \ref{alg: gpm}. Note that when $\alpha = 0$, the GPM reduces to the classical power method, from which Algorithm \ref{alg: gpm} is named.
\begin{algorithm}[h] 
	\caption{GPM for problem \eqref{Problem}}
	\begin{algorithmic}[1]\label{alg: gpm}
		\STATE Input: The matrix $C$, stepsize $\alpha \geq 0$.\\
		\STATE Compute the top $d$ eigenvectors $\Phi$ of $C$ with $\Phi^\top \Phi = nI_d$.
		\STATE Compute $G^0 \in \Pi_\odn(\Phi)$. 
		\FOR{$k=0,1,\ldots$}
		\STATE $G^{k+1} \in \mathcal{T}_\alpha(G^k)= \Pi_{\odn}(\tilde{C}G^{k}),$ where $\tilde{C}:=C+\alpha I_{nd}$.
		\ENDFOR
	\end{algorithmic}
\end{algorithm}



In the following part of this section, we prove that both the sequence of iterates and the corresponding sequence of objective function values of Algorithm \ref{alg: gpm} converge linearly to a global maximizer and the optimal value of problem \eqref{Problem}. Such a result is significant since generally the GPM, a local optimization method, may not converge to a point we are interested, let alone the global maximizer with linear convergence rate. Although a similar result for problem \eqref{Problem} with complete measurements under Gaussian noise has appeared in \cite{ling2020improved}, we highlight that the technique in our proof is quite different from that in \cite{ling2020improved}.  They follow a similar routine in \cite{zhong2018near} and can fail when it comes to incomplete measurements and general noise models while our proof of the following linear convergence result of GPM is based on the local error bound result established in Theorem \ref{EB}. It is worth noting  that various significant error bound results have been widely used in the convergence analysis for different kind of algorithms and problems \cite{luo1993error,zhou2017unified,liu2019quadratic,zhang2020new}.
\begin{theorem}[Linear convergence rate of GPM] \label{thm:convergence-rate}
	Suppose that\\
	{\rm (i)} $\|W - {\mu \cdot 1_n1_n^\top} \| + \|  A \circ \Delta\|  \leq \frac{n^{3/4}{\mu}}{60 d^{1/2}}$ and $\| (A \circ \Delta)G^\star  \|_\infty \leq \frac{n{\mu}}{10}$, \\
	{\rm (ii)}  $\max_{i\in [n]} \left\| \left((A - {\mu\cdot 1_{nd}1_{nd}^\top} ) \circ G^\star G^{\star\top} \right)_i^\top \hat{G} \right\|  \leq  \frac{n{\mu}}{10}$,\\
	{\rm (iii)}  the stepsize $\alpha$ satisfies $ \| A \circ \Delta \| + \| W - {\mu \cdot 1_n1_n^\top}\| <\alpha \leq \frac{n{\mu}}{30\sqrt{2d}}$.\\ Then, the sequence $\{ G^k\}_{k\geq 0}$ generated by Algorithm \ref{alg: gpm} satisfies 
	\begin{align*}
		&f(\hat{G}) - f(G^k) \leq \left(f(\hat{G}) - f(G^0) \right) \lambda^k\quad \text{and}\quad {\rm d}(G^k,\hat{G}) \leq a \left(f(\hat{G}) - f(G^0)\right)^{1/2} \lambda^{k/2},	
	\end{align*}
	where 
$a > 0$, $\lambda \in (0,1)$ are constants that depend only on $n$, $d$, $\mu$, $\alpha$, and $\hat{G}$ is a global maximizer of problem \eqref{Problem}.
\end{theorem}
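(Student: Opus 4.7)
The plan is to combine three ingredients---a sufficient-ascent estimate for the GPM, the local error bound of Theorem \ref{EB} (together with the residual-versus-step bound of Remark \ref{remark-D}), and a quadratic upper bound on the objective gap $f(\hat{G})-f(G^k)$ near $\hat{G}$---to produce the standard linear contraction $f(\hat{G})-f(G^{k+1})\leq \lambda\bigl(f(\hat{G})-f(G^k)\bigr)$, from which both assertions follow by routine algebra.

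\textbf{Sufficient ascent.} Plugging the feasible point $G'=G^k$ into the minimization defining $G^{k+1}\in\arg\min_{G'\in\odn}f_{G^k,\alpha}(G')$ yields $\alpha\|G^{k+1}-G^k\|_F^2\leq 2\langle CG^k,G^{k+1}-G^k\rangle$. Since $\|G^{k+1}\|_F=\|G^k\|_F=\sqrt{nd}$, the quadratic identity
\begin{equation*}
f(G^{k+1})-f(G^k)=2\langle CG^k,G^{k+1}-G^k\rangle+\operatorname{tr}\!\left((G^{k+1}-G^k)^\top C(G^{k+1}-G^k)\right)
\end{equation*}
combined with $\lambda_{\min}(C)\geq-K$ (where $K:=\|W-\mu\cdot 1_n 1_n^\top\|+\|A\circ\Delta\|$, obtained by splitting $C=A\circ G^\star G^{\star\top}+A\circ\Delta$ and invoking \eqref{eq: agstargstarnorm}) delivers the sufficient ascent $f(G^{k+1})-f(G^k)\geq(\alpha-K)\|G^{k+1}-G^k\|_F^2$; assumption (iii) makes $c_1:=\alpha-K>0$.

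\textbf{Quadratic upper bound on the gap.} By Lemma \ref{lemma-equi2}(a) one has $C\hat{G}=\text{symblockdiag}(C\hat{G}\hat{G}^\top)\hat{G}$, so the $i$-th block is $S_i\hat{G}_i$ with $S_i:=C_i^\top\hat{G}\hat{G}_i^\top\succeq\bm{0}$ by Lemma \ref{prop: globalfix}. After right-multiplying $\hat{G}$ by the $g\in\od$ attaining the distance so that $\|G^k-\hat{G}\|_F={\rm d}(G^k,\hat{G})$ (which leaves each $S_i$ unchanged), the quadratic expansion of $f$ at $\hat{G}$ reads
\begin{equation*}
f(\hat{G})-f(G^k)=2\sum_{i=1}^n\operatorname{tr}\!\left(S_i(I-G^k_i\hat{G}_i^\top)\right)-\operatorname{tr}\!\left((G^k-\hat{G})^\top C(G^k-\hat{G})\right).
\end{equation*}
For PSD $S_i$ and $M=G^k_i\hat{G}_i^\top\in\od$, the elementary inequality $\bm{0}\preceq I-\operatorname{sym}(M)\preceq 2I$ combined with $\operatorname{tr}(S_i N)\leq\|S_i\|\operatorname{tr}(N)$ for $N\succeq\bm{0}$ gives $0\leq\operatorname{tr}(S_i(I-M))\leq\tfrac12\|S_i\|\cdot\|\hat{G}_i-G^k_i\|_F^2$, and together with $\lambda_{\min}(C)\geq-K$ this yields $f(\hat{G})-f(G^k)\leq c_4\,{\rm d}^2(G^k,\hat{G})$ with $c_4:=\max_i\|S_i\|+K=O(n\mu)$ (the bound on $\max_i\|S_i\|$ comes from the decomposition of $C_i^\top\hat{G}$ used in Lemma \ref{dist-GstarGhat}).

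\textbf{Chaining and main obstacle.} Under the inductive hypothesis ${\rm d}(G^k,G^\star)\leq\sqrt{n}/5$, Theorem \ref{EB} gives $(n\mu/10)\,{\rm d}(G^k,\hat{G})\leq\rho_\alpha(G^k)$ and Remark \ref{remark-D} gives $\rho_\alpha(G^k)\leq nd\|\tilde{C}\|\cdot\|G^{k+1}-G^k\|_F$; chaining with the sufficient ascent and the quadratic gap bound produces $f(G^{k+1})-f(G^k)\geq(1-\lambda)(f(\hat{G})-f(G^k))$ with $1-\lambda=\tfrac{\mu^2(\alpha-K)}{100\,d^2\|\tilde{C}\|^2 c_4}\in(0,1)$, so the first assertion follows by induction. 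The distance assertion follows by rewriting the same chain as ${\rm d}^2(G^k,\hat{G})\leq\tfrac{100\,d^2\|\tilde{C}\|^2}{\mu^2(\alpha-K)}(f(\hat{G})-f(G^k))$ and substituting the just-proved geometric decay, giving $a=\tfrac{10\,d\|\tilde{C}\|}{\mu\sqrt{\alpha-K}}$. The main obstacle is closing the induction, i.e., verifying ${\rm d}(G^k,G^\star)\leq\sqrt{n}/5$ for every $k$ so Theorem \ref{EB} remains applicable; this exploits the monotonicity of $f(G^k)$ (which makes the right-hand side of the chained ${\rm d}^2$ bound non-increasing in $k$), the triangle inequality ${\rm d}(G^k,G^\star)\leq{\rm d}(G^k,\hat{G})+{\rm d}(\hat{G},G^\star)$, and Lemma \ref{dist-GstarGhat} (which under assumption (i) yields ${\rm d}(\hat{G},G^\star)\leq n^{1/4}/15$), with the remaining slack absorbed by a standard spectral-perturbation estimate for the initializer $G^0$.
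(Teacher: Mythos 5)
Your three analytic ingredients are sound and in fact mirror the paper's Proposition \ref{pro: 3conditioner}: your sufficient-ascent constant $c_1=\alpha-K$ plays the role of $a_0$, your quadratic cost-to-go bound with $c_4=O(n\mu)$ (derived via the blockwise identity $C_i^\top\hat{G}=S_i\hat{G}_i$ with $S_i\succeq\bm{0}$ from Lemma \ref{prop: globalfix}) is a correct variant of the paper's estimate $f(\hat{G})-f(G^k)\leq a_1\,{\rm d}^2(G^k,\hat{G})$ obtained there from $D_\alpha(\hat{G})\hat{G}=\bm{0}$, and your use of Remark \ref{remark-D} is a (cruder-constant but valid) substitute for the safeguard $\rho_\alpha(G^k)\leq\|\tilde{C}G^k\|_\infty\|G^{k+1}-G^k\|_F$. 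Chaining these with Theorem \ref{EB} is exactly how the paper derives the geometric decay. The genuine gap is the step you postpone to the end: proving ${\rm d}(G^k,G^\star)\leq\sqrt{n}/5$ for every $k$, which is what licenses applying Theorem \ref{EB} along the whole trajectory.

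Your sketch for closing that induction does not work. The chained bound ${\rm d}(G^{k},\hat{G})\leq a\,(f(\hat{G})-f(G^{k}))^{1/2}$ is only available once the error bound holds at $G^{k}$, so invoking it (or its monotone right-hand side) at index $k+1$ is circular; and the non-circular variant, ${\rm d}(G^{k+1},\hat{G})\leq{\rm d}(G^k,\hat{G})+\|G^{k+1}-G^k\|_F$ with both terms controlled through the chain at index $k$, fails quantitatively. Indeed your $a\asymp d\|\tilde{C}\|/(\mu\sqrt{\alpha-K})$ blows up as $\alpha\downarrow K$, which assumption (iii) permits, while $f(\hat{G})-f(G^0)$ is only of size $O(n\mu\,{\rm d}^2(G^0,\hat{G}))$ with ${\rm d}(G^0,\hat{G})\asymp n^{1/4}$; the resulting bound on ${\rm d}(G^k,\hat{G})$ can exceed the fixed radius $\sqrt{n}/5$ by arbitrarily large factors, so no choice of ``slack absorbed by the initializer'' rescues it (the rate constants $a,\lambda$ may depend on $\alpha$, but the error-bound radius may not). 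The paper closes this hole with a separate, error-bound-free invariance argument (Proposition \ref{prop: stayintheball}): a one-step estimate on the iteration map itself, ${\rm d}(G^{k+1},G^\star)\leq\|\Pi_{\odn}(\tilde{C}G^k)-G^\star g^k\|_F\leq2\|\tfrac{1}{n\mu}\tilde{C}G^k-G^\star g^k\|_F$ (using the projection bound from \cite{liu2020unified}), combined with $\|\tfrac1n G^\star G^{\star\top}G^k-G^\star g^k\|_F\leq\tfrac{1}{2\sqrt{n}}{\rm d}^2(G^k,G^\star)$, which shows the ball $\{{\rm d}(\cdot,G^\star)\leq\sqrt{n}/5\}$ is mapped into itself under assumptions (i) and (iii), starting from the spectral estimator of Lemma \ref{lemma: spectral}. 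You need such a direct per-iteration invariance argument (or an equivalent) before your chaining argument can be run for all $k$; with it added, the rest of your proof goes through.
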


Before presenting the proof of Theorem \ref{thm:convergence-rate}, we need some preparations.
The result of Lemma \ref{dist-GstarGhat} can be employed to acquire the following lemma that quantifies the distance between the initial point $G^0$ and the ground truth $G^\star$.
\begin{lemma}[Spectral estimator]\label{lemma: spectral}
	The spectral estimator $G^0 \in \odn$ satisfies 
	\begin{align} 
		{\rm d}(G^0, G^\star) \leq \frac{8\sqrt{d} \left(   \left\| W-{\mu \cdot 1_n1_n^\top} \right\| + \left\| A \circ \Delta \right\|  \right) }{\sqrt{n}{\mu}}.
	\end{align}
\end{lemma}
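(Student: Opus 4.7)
The plan is to combine the distance bound from Lemma \ref{dist-GstarGhat} with the optimality of the spectral relaxation and the contractivity property of the projection onto $\odn$. The key observation is that $\Phi$ itself satisfies the hypotheses of Lemma \ref{dist-GstarGhat}, so we get a bound on ${\rm d}(\Phi, G^\star)$ for free, and then only need to compare $G^0 = \Pi_\odn(\Phi)$ with $\Phi$.

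First I would verify that Lemma \ref{dist-GstarGhat} applies to $\Phi$. By construction $\Phi^\top \Phi = nI_d$, and $\Phi$ consists of the top $d$ eigenvectors of $C$ scaled so that $\Phi$ maximizes ${\rm tr}(G^\top C G)$ over the relaxed constraint $\{G \in \mathbb{R}^{nd\times d} : G^\top G = nI_d\}$. Since $G^\star\in\odn$ satisfies $G^{\star\top}G^\star = nI_d$ and therefore lies in this relaxed feasible set, we conclude
\[
{\rm tr}(\Phi^\top C \Phi) \geq {\rm tr}(G^{\star\top} C G^\star).
\]
Hence Lemma \ref{dist-GstarGhat} yields
\[
{\rm d}(\Phi, G^\star) \leq \frac{4\sqrt{d}\bigl(\|W - \mu\cdot 1_n 1_n^\top\| + \|A\circ \Delta\|\bigr)}{\sqrt{n}\mu}.
\]

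Next I would use the nonexpansive nature of the projection onto the closed set $\odn$. Let $g^\star\in\od$ attain $\|\Phi - G^\star g^\star\|_F = {\rm d}(\Phi, G^\star)$. Because $G^\star g^\star \in \odn$ and $G^0 \in \Pi_\odn(\Phi)$, the definition of the projection gives
\[
\|G^0 - \Phi\|_F \leq \|G^\star g^\star - \Phi\|_F = {\rm d}(\Phi, G^\star).
\]
Combining this with the triangle inequality yields
\[
{\rm d}(G^0, G^\star) \leq \|G^0 - G^\star g^\star\|_F \leq \|G^0 - \Phi\|_F + \|\Phi - G^\star g^\star\|_F \leq 2\, {\rm d}(\Phi, G^\star),
\]
which, combined with the bound from Lemma \ref{dist-GstarGhat} applied to $\Phi$, delivers the factor-of-$8$ constant in the claimed inequality.

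There is no real obstacle here; the argument is essentially a two-line consequence of Lemma \ref{dist-GstarGhat} together with the defining optimality property of the metric projection. The only subtle point to spell out is that the minimization over $g\in\od$ in ${\rm d}(\cdot,\cdot)$ interacts correctly with the projection bound, which is handled by picking the specific alignment $g^\star$ before invoking the triangle inequality.
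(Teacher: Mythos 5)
Your proof is correct and takes essentially the same route as the paper: apply Lemma \ref{dist-GstarGhat} to $\Phi$ (valid since $\Phi^\top\Phi=nI_d$ and $\Phi$ maximizes the relaxed problem whose feasible set contains $G^\star$), then lose a factor of $2$ in passing from $\Phi$ to $G^0\in\Pi_{\odn}(\Phi)$. The only difference is that you derive the factor-of-$2$ step directly from the optimality property of the metric projection (note it is this optimality, not nonexpansiveness, that your argument actually uses), whereas the paper simply cites this inequality as Lemma 2 of \cite{liu2020unified}.
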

\begin{proof}
Recall that $\Phi$ is the matrix of top $d$ eigenvectors of $C$ with $\Phi^\top \Phi = nI_d$, which satisfies ${\rm tr}(\Phi^\top C \Phi) \geq {\rm tr} \left(G^{\star\top} C G^{\star} \right)$.	Without loss of generality, we assume ${\rm  d}(\Phi, G^\star) = \| \Phi - G^\star \|_F$. Then it follows from Lemma \ref{dist-GstarGhat} that
	\begin{align*}
		{\rm d}(G^0, G^\star) \leq \| G^0 - G^\star \|_F \leq 2 \| \Phi - G^\star\|_F  \leq \frac{8\sqrt{d} \left(   \left\| W-{\mu \cdot 1_n1_n^\top} \right\| + \left\| A \circ \Delta \right\|  \right) }{\sqrt{n}{\mu}},
	\end{align*}
where the second inequality is due to \cite[Lemma 2]{liu2020unified}. The proof is complete.
\end{proof}

To render the error bound result of Theorem \ref{EB} applicable to the whole sequence of iterates $\{G^k\}_{k\geq 0}$, hence a workhorse in the subsequent proof of Theorem \ref{thm:convergence-rate}, we still need the following proposition guaranteeing that the entire sequence of iterates $\{G^k\}_{k\geq 0}$ lies close to the underlying ground truth $G^\star$. 
\begin{prop}[The sequence of iterates stays in the ball] \label{prop: stayintheball}
	Suppose that \\
	{\rm (i)} $\|W - {\mu \cdot 1_n1_n^\top} \| + \|  A \circ \Delta\|  \leq \frac{n{\mu}}{60 d^{1/2}}$,\\
	{\rm (ii)}  the stepsize $\alpha \leq \frac{n{\mu}}{30\sqrt{2d}}$.\\
Then the sequence of iterates $\{G^{k}\}_{k \geq 0}$ generated by Algorithm \ref{alg: gpm} satisfies ${\rm d}(G^k,G^\star) \leq \frac{\sqrt{n}}{5}$.
\end{prop}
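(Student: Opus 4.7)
The plan is to proceed by induction on $k$. For the base case $k=0$, Lemma~\ref{lemma: spectral} together with condition~(i) immediately gives
\[
{\rm d}(G^0, G^\star) \;\leq\; \tfrac{8\sqrt{d}\left(\|W - \mu\cdot 1_n 1_n^\top\| + \|A \circ \Delta\|\right)}{\sqrt{n}\mu} \;\leq\; \tfrac{8\sqrt{d}}{\sqrt{n}\mu}\cdot\tfrac{n\mu}{60\sqrt{d}} \;=\; \tfrac{2\sqrt{n}}{15} \;<\; \tfrac{\sqrt{n}}{5}.
\]

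For the inductive step, assume ${\rm d}(G^k, G^\star) \leq \sqrt{n}/5$ and pick $g \in \od$ with $\|G^k - G^\star g\|_F = {\rm d}(G^k, G^\star)$; set $R := G^k - G^\star g$. Since $G^{k+1} \in \Pi_{\odn}(\tilde{C} G^k)$ maximizes $\langle\cdot, \tilde{C} G^k\rangle$ over $\odn$, the inequality $\langle G^{k+1} - G^\star g, \tilde{C} G^k\rangle \geq 0$ holds. Splitting $\tilde{C} G^k = n\mu G^\star g + (\tilde{C} - n\mu I_{nd})G^\star g + \tilde{C} R$ and using the elementary identity $\langle G^{k+1} - G^\star g, G^\star g\rangle = -\tfrac{1}{2}\|G^{k+1} - G^\star g\|_F^2$, Cauchy--Schwarz converts this into
\[
\tfrac{n\mu}{2}\|G^{k+1} - G^\star g\|_F \;\leq\; \|(\tilde{C} - n\mu I_{nd})G^\star\|_F + \|\tilde{C} R\|_F.
\]
Expanding $\tilde{C} G^\star$ through the model $C = A \circ (G^\star G^{\star\top} + \Delta)$ and $A = W \otimes (1_d 1_d^\top)$, the block identity $[(A\circ G^\star G^{\star\top})G^\star]_i = (\sum_j W_{ij})G_i^\star$ shows that the first term is bounded by $\sqrt{nd}(\|W - \mu\cdot 1_n 1_n^\top\| + \|A \circ \Delta\| + \alpha)$, which is controlled by conditions~(i) and~(ii).

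For $\|\tilde{C} R\|_F$, I decompose $R = R_\parallel + R_\perp$ with $R_\parallel := \tfrac{1}{n} G^\star G^{\star\top} R$. The Procrustes optimality of $g$ makes $G^{\star\top} G^k g^\top$ symmetric positive semidefinite with eigenvalues bounded by $n$, which forces $\|G^{\star\top} R\|_F \leq nd - \|G^{\star\top} G^k\|_* = \tfrac{1}{2}\|R\|_F^2$, so $\|R_\parallel\|_F \leq \tfrac{1}{2\sqrt{n}}\|R\|_F^2$ and $\|\tilde{C} R_\parallel\|_F \leq \|\tilde{C}\|\cdot\tfrac{1}{2\sqrt{n}}\|R\|_F^2$ with $\|\tilde{C}\| = O(n\mu)$. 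The main technical obstacle is the sharp estimate $\|\tilde{C} R_\perp\|_F \leq (\|W - \mu\cdot 1_n 1_n^\top\| + \|A\circ\Delta\| + \alpha) \|R\|_F$, which avoids the naive factor of $n\mu$. The key is to write $A \circ G^\star G^{\star\top} = B (W \otimes I_d) B^\top$ with the orthogonal block-diagonal matrix $B := \textrm{blockdiag}(G_1^\star, \dots, G_n^\star)$; the defining relation $G^{\star\top} R_\perp = 0$ translates into $B^\top R_\perp$ lying in the kernel of the rank-$d$ dominant component $\mu(1_n 1_n^\top)\otimes I_d$ of $\mu(1_n 1_n^\top)\otimes I_d + (W - \mu \cdot 1_n 1_n^\top)\otimes I_d$, so only the perturbation $(W - \mu\cdot 1_n 1_n^\top)\otimes I_d$ acts on $B^\top R_\perp$ and contributes operator norm $\|W - \mu\cdot 1_n 1_n^\top\|$. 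Combining all estimates in the displayed inequality and invoking conditions~(i), (ii) along with the induction hypothesis $\|R\|_F \leq \sqrt{n}/5$, a short numerical computation yields $\|G^{k+1} - G^\star g\|_F < \sqrt{n}/5$, whence ${\rm d}(G^{k+1}, G^\star) \leq \|G^{k+1} - G^\star g\|_F < \sqrt{n}/5$, closing the induction.
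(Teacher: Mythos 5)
Your proof is correct, and it closes numerically: with $K:=\|W-\mu\cdot 1_n1_n^\top\|+\|A\circ\Delta\|\leq \frac{n\mu}{60\sqrt d}$ and $\alpha\leq\frac{n\mu}{30\sqrt{2d}}$, your three terms contribute at most roughly $(0.081+0.042+0.017)\sqrt n<\sqrt n/5$, so the induction goes through. The route differs from the paper's in two respects, both worth noting. First, where the paper bounds ${\rm d}(G^{k+1},G^\star)\leq \|\Pi_{\odn}(\tilde C G^k)-G^\star g^k\|_F\leq 2\|\tfrac{1}{n\mu}\tilde C G^k-G^\star g^k\|_F$ by citing the projection lemma of Liu--Yue--So (\cite[Lemma 2]{liu2020unified}), you re-derive exactly this bound from first principles: the variational characterization $\langle G^{k+1}-G^\star g,\tilde C G^k\rangle\geq 0$ combined with the identity $\langle G^{k+1}-G^\star g,G^\star g\rangle=-\tfrac12\|G^{k+1}-G^\star g\|_F^2$ gives $\frac{n\mu}{2}\|G^{k+1}-G^\star g\|_F\leq\|\tilde C G^k-n\mu G^\star g\|_F$, which is the same inequality with the same constant, so your argument is self-contained where the paper's is not. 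Second, you organize the remainder estimate differently: the paper splits $\tfrac{1}{n\mu}\tilde C G^k$ into $\tfrac1n G^\star G^{\star\top}G^k$ (whose deviation from $G^\star g^k$ is the quadratic term $\tfrac{1}{2\sqrt n}{\rm d}^2(G^k,G^\star)$, i.e.\ exactly your $\|R_\parallel\|_F$ bound) plus the small operator $(A-\mu 1_{nd}1_{nd}^\top)\circ G^\star G^{\star\top}+A\circ\Delta+\alpha I$ applied to $G^k$, estimated in operator norm on $G^\star$ and on $R$; you instead split $R=R_\parallel+R_\perp$ and kill the dominant rank-$d$ part on $R_\perp$ via the factorization $A\circ G^\star G^{\star\top}=B(W\otimes I_d)B^\top$ and $G^{\star\top}R_\perp=0$. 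These yield essentially the same bounds (your $\tilde C R_\perp$ estimate plays the role of the paper's error-operator-on-$R$ estimate), so neither buys a better constant, but your kernel observation is a clean structural way to see why only the perturbation $\|W-\mu\cdot 1_n1_n^\top\|$ survives on the orthogonal complement, and the inline proof of the factor-$2$ projection inequality makes the proposition independent of the external reference.
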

\begin{proof}
	The result will be shown by induction. When $k=0$, we check by Lemma \ref{lemma: spectral} and the assumption {\rm (i)} that ${\rm d}(G^0,G^\star) \leq \frac{\sqrt{n}}{5}$. Next, we assume that ${\rm d}(G^k,G^\star)=\Vert G^k-G^\star g^k\Vert_F \leq \frac{\sqrt{n}}{5}$, where $g^k\in \mathcal{O}(d)$, for some $k \geq 0$. Then, by the iterative procedure of Algorithm \ref{alg: gpm} and \cite[Lemma 2]{liu2020unified}, we have
	\begin{align}\label{lemma-key1}
		& {\rm d}(G^{k+1},G^\star) =\min\limits_{g\in\mathcal{O}(d)}\Vert G^{k+1}-G^\star g\Vert_F\notag\\
		&\leq \Vert G^{k+1}-G^\star g^{k}\Vert_F=\Vert \Pi_{\odn}(\tilde{C}G^k)-G^\star g^{k}\Vert_F \leq 2\bigg\Vert \frac{1}{n{\mu}}\tilde{C}G^k-G^\star g^{k}\bigg\Vert_F \notag \\
		& =2\left\Vert \frac{1}{n} G^\star G^{\star \top} G^k-G^\star g^{k}\right\Vert_F+\frac{2}{n{\mu}} \left\Vert  \left((A - {\mu\cdot 1_{nd}1_{nd}^\top})\circ  G^\star G^{\star \top} + A\circ \Delta + \alpha I  \right) G^k \right\Vert_F\notag\\
		&\leq 2\left\Vert \frac{1}{n} G^\star G^{\star \top} G^k-G^\star g^{k}\right\Vert_F+\frac{2}{n{\mu}} \left\Vert  ((A - {\mu\cdot 1_{nd}1_{nd}^\top})\circ  G^\star G^{\star \top} + A\circ \Delta + \alpha I \right\Vert \cdot { \rm d}(G^k,G^\star) \notag \\
		&+ \frac{2}{n{\mu}} \left\Vert  \left((A - {\mu\cdot 1_{nd}1_{nd}^\top})\circ  G^\star G^{\star \top} + A\circ \Delta + \alpha I  \right) G^\star \right\Vert_F.
	\end{align}
	Note that
	\begin{align}\label{lemma-key12}
		\left\Vert \frac{1}{n} G^\star G^{\star \top} G^k-G^\star g^{k}\right\Vert_F^2 
		&=\sum_{i=1}^n \bigg\Vert \frac{1}{n}\sum_{j=1}^n G_i^\star G_j^{\star\top} G_j^k-G_i^\star g^{k} \bigg\Vert_F^2= n \bigg\Vert\frac{1}{n} \sum_{j=1}^n G_j^{\star\top} G_j^k-g^{k} \bigg\Vert_F^2\notag\\
		& =\frac{1}{n} \|G^{\star \top} G^{k}-n  g^{k}\|_{F}^{2}
		\leq\frac{1}{4n} {\rm d}^4(G^k,G^\star), 
	\end{align}
where the inequality is from \eqref{EB-ineq-key} with $G^\star$ and $G^k g^{k\top}$ replacing $\hat{G}$ and $G\hat{g}$, respectively. Then we know from \eqref{lemma-key1} and \eqref{lemma-key12} with \eqref{eq: agstargstarnorm} that (recall that $K=\| W-{\mu \cdot 1_n1_n^\top} \| + \| A \circ \Delta \|$)
	\begin{align}\label{lemma-key2}
		{\rm d}(G^{k+1},G^\star)
		&\leq \left(\frac{1}{\sqrt{n}}{\rm d}(G^k,G^\star)+\frac{2}{n{\mu}}(K+\alpha) \right){\rm d}(G^k,G^\star)+\frac{2}{n{\mu}}(K+\alpha)   \sqrt{nd} \notag \\
		& \leq \frac{\sqrt{n}}{5}.
	\end{align}
This completes the proof.
\end{proof}

Now, let us move to the second part of results, which depicts key properties of Algorithm \ref{alg: gpm}.
\begin{prop} \label{pro: 3conditioner}
Under the assumptions of Theorem \ref{thm:convergence-rate}, the sequence $\{G^{k}\}_{k \geq 0}$ generated by Algorithm \ref{alg: gpm} satisfies:\\
{\rm (a) (Sufficient ascent)} $f(G^{k+1})-f(G^{k}) \geq a_{0} \cdot\|G^{k+1}-G^{k}\|_{F}^{2}$,\\
{\rm (b) (Cost-to-go estimate)} $f(\hat{G})-f(G^{k}) \leq a_{1} \cdot {\rm d}^{2}(G^{k}, \hat{G})$,\\
{\rm (c) (Safeguard)} $\rho(G^{k}) \leq a_{2} \cdot\|G^{k+1}-G^{k}\|_{F}$,\\
where $a_{0}, a_{1}, a_{2}>0$ are constants that depend only on $n$, $d$, $\mu$, $\alpha$, and $\hat{G}$ is a global maximizer of problem \eqref{Problem}.
\end{prop}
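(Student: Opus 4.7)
All three estimates in Proposition \ref{pro: 3conditioner} are the classical ingredients needed for a linear-convergence argument via error bounds, and each has a short proof built entirely on material already available. My plan is to use the Taylor-type identity
\[
f(Y) - f(X) = 2\langle CX, Y - X\rangle + \langle C(Y-X), Y-X\rangle,
\]
valid for any $X, Y \in \mathbb{R}^{nd \times d}$, with $(X,Y) = (G^k, G^{k+1})$ for part (a) and $(X,Y) = (G^k, \hat G)$ for part (b); the safeguard in part (c) follows directly from Remark \ref{remark-D} by choosing $T_\alpha(G^k) = G^{k+1} \in \mathcal{T}_\alpha(G^k)$, which yields $a_2 = nd\|\tilde C\|$.

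For part (a), I would first squeeze a linear lower bound out of the projection step. From $G^{k+1} \in \Pi_\odn(\tilde C G^k)$ and $\|G^{k+1}\|_F = \|G^k\|_F = \sqrt{nd}$, the non-expansiveness of the projection gives $\langle \tilde C G^k, G^{k+1} - G^k\rangle \geq 0$. Splitting $\tilde C = C + \alpha I$ and using the block-orthogonality identity $\langle G^k, G^{k+1} - G^k\rangle = -\tfrac12 \|G^{k+1} - G^k\|_F^2$ turns this into $2\langle CG^k, G^{k+1} - G^k\rangle \geq \alpha \|G^{k+1} - G^k\|_F^2$. The quadratic piece is then handled via the decomposition $C = \mu G^\star G^{\star\top} + E$, where $E := A\circ\Delta + (A - \mu\cdot 1_{nd}1_{nd}^\top)\circ G^\star G^{\star\top}$; by \eqref{eq: agstargstarnorm} the second summand has operator norm $\|W - \mu\cdot 1_n 1_n^\top\|$, so $\|E\| \leq \|A\circ\Delta\| + \|W - \mu\cdot 1_n 1_n^\top\|$. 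Since $\mu G^\star G^{\star\top} \succeq 0$, we get $\langle C(G^{k+1}-G^k), G^{k+1}-G^k\rangle \geq -\|E\|\,\|G^{k+1}-G^k\|_F^2$. Combining the two bounds in the Taylor identity yields sufficient ascent with $a_0 = \alpha - \|E\|$, which is strictly positive by condition (iii).

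For part (b), by the $\od$-invariance of $f$, $\hat G \hat g$ remains a global maximizer of \eqref{Problem} whenever $\hat G$ is; choosing $\hat g \in \od$ so that ${\rm d}(G^k, \hat G) = \|G^k - \hat G \hat g\|_F$ and relabeling $\hat G \hat g \to \hat G$, we may assume $\|\hat G - G^k\|_F = {\rm d}(G^k, \hat G)$ while preserving the properties of Lemmas \ref{lemma-equi2} and \ref{prop: globalfix}. In the Taylor identity the quadratic term is immediately bounded: $-\langle C(\hat G - G^k), \hat G - G^k\rangle \leq \|C\|\,\|\hat G - G^k\|_F^2$. For the linear term, first-order criticality of $\hat G$ (Lemma \ref{lemma-equi2}(a)) gives $[C\hat G]_i = S_i \hat G_i$ with $S_i := C_i^\top \hat G \hat G_i^\top$ symmetric, and Lemma \ref{prop: globalfix} further gives $S_i \succeq 0$. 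Setting $Q_i := \hat G_i (G_i^k)^\top$, which is orthogonal, and using $S_i = S_i^\top$ together with $\hat G_i \hat G_i^\top = I$, a short manipulation using $(I - Q_i)(I - Q_i)^\top = 2I - Q_i - Q_i^\top$ produces
\[
2\langle C\hat G, \hat G - G^k\rangle = \sum_{i=1}^n \mathrm{tr}\bigl(S_i (I - Q_i)(I - Q_i)^\top\bigr) \leq \max_{i \in [n]} \|S_i\| \cdot \|\hat G - G^k\|_F^2.
\]
Summing gives $a_1 = \|C\| + \max_{i \in [n]}\|S_i\|$, both pieces $O(n\mu)$ under the standing assumptions, hence a constant depending only on $n$, $d$, $\mu$, $\alpha$.

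The only genuinely delicate step is the quadratic bound in (a): the naive estimate $-\|C\|\,\|G^{k+1}-G^k\|_F^2$ would make $a_0$ negative, since $\|C\|$ is of order $n\mu$ while $\alpha \leq n\mu/(30\sqrt{2d})$. The decomposition $C = \mu G^\star G^{\star\top} + E$, which discards the positive semidefinite signal part, is precisely what makes condition (iii) ($\alpha > \|E\|$) sufficient for sufficient ascent. Everything else is bookkeeping on operator-norm and eigenvalue inequalities already collected in Sections 2 and 4.
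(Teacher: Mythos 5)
Your proposal is correct, and for (a) and (c) it is essentially the paper's own argument in different clothing. In (a) the paper obtains $f(G^{k+1})-f(G^k)\ \ge\ {\rm tr}\bigl((G^{k+1}-G^k)^\top\tilde C(G^{k+1}-G^k)\bigr)$ from the nuclear-norm characterization of $\mathcal{T}_\alpha$ (Lemma \ref{lemma-equi}) and sets $a_0=\lambda_{\min}\bigl(A\circ\Delta+(A-\mu\cdot 1_{nd}1_{nd}^\top)\circ G^\star G^{\star\top}+\alpha I_{nd}\bigr)$, which is exactly your $\alpha-\|E\|$ up to Weyl's inequality; your key inequality $\langle\tilde CG^k,G^{k+1}-G^k\rangle\ge 0$ is true, but not because of ``non-expansiveness'' (projections onto the nonconvex set $\odn$ are not non-expansive) --- it holds because every point of $\odn$ has Frobenius norm $\sqrt{nd}$, so $G^{k+1}\in\Pi_{\odn}(\tilde CG^k)$ maximizes $\langle\tilde CG^k,\cdot\rangle$ over $\odn$, which is precisely the paper's nuclear-norm step. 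Part (c) is literally Remark \ref{remark-D} applied with $T_\alpha(G^k)=G^{k+1}$; the paper's proof re-derives that identity and uses the sharper factor $\|\tilde CG^k\|_\infty$ in place of $nd\|\tilde C\|$, but either constant serves. Part (b) is where you genuinely depart from the paper: instead of the identity $f(\hat G)-f(G^k)={\rm tr}\bigl((G^k\hat g^k-\hat G)^\top D_\alpha(\hat G)(G^k\hat g^k-\hat G)\bigr)$, which rests on $D_\alpha(\hat G)\hat G=\bm{0}$ and Lemma \ref{lemma-equi}(d), you expand $f$ quadratically around $\hat G$ and turn the first-order term into a quadratic one via the symmetry of $S_i=C_i^\top\hat G\hat G_i^\top$ (Lemma \ref{lemma-equi2}(a)), its positive semidefiniteness (Lemma \ref{prop: globalfix}), and the identity ${\rm tr}\bigl(S_i(I-Q_i)\bigr)=\tfrac12{\rm tr}\bigl(S_i(I-Q_i)(I-Q_i)^\top\bigr)$, noting $\sum_i\|I-Q_i\|_F^2=\|\hat G-G^k\|_F^2$. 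This exploits the same underlying mechanism (the gradient term at the maximizer is quadratically small in ${\rm d}(G^k,\hat G)$ precisely because $C_i^\top\hat G\hat G_i^\top$ is symmetric PSD), but it bypasses the operator $D_\alpha$ and the nuclear-norm bookkeeping entirely, which is a tidy simplification.

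Two small patches are needed to match the statement as written. First, your constants $a_1=\|C\|+\max_{i}\|S_i\|$ and $a_2=nd\|\tilde C\|$ still involve data-dependent norms; to get constants depending only on $n,d,\mu,\alpha$ you must, as the paper does for its bound $\|\tilde C\|+\|\tilde C\hat G\|_\infty\le 3n\mu$, invoke assumptions (i)--(iii) of Theorem \ref{thm:convergence-rate} together with \eqref{eq: agstargstarnorm} and \eqref{dist-GstarGhat-ineq} to bound $\|C\|$, $\max_i\|S_i\|\le\|C\hat G\|_\infty$ and $\|\tilde C\|$ by explicit multiples of $n\mu$. Second, a labeling slip: you announce the Taylor identity with $(X,Y)=(G^k,\hat G)$ for part (b), but the expansion you actually use (linear term at $\hat G$, quadratic term entering with a minus sign) is the one around $\hat G$, i.e.\ $(X,Y)=(\hat G,G^k)$; with that correction the argument is complete.
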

\begin{proof}
(a): Recall that $\tilde{C} = C + \alpha I$, then we know
\begin{align}
  &f(G^{k+1})-f(G^{k}) = {\rm tr}\left((G^{k+1})^\top \tilde{C}G^{k+1}\right)  - {\rm tr}\left((G^{k})^\top \tilde{C}G^{k}\right) \notag \\
  &={\rm tr}\left((G^{k+1}-G^{k})^{\top} \tilde{C}(G^{k+1}-G^{k})\right)-2{\rm tr}\left((G^{k})^{\top} \tilde{C} G^{k}\right)+2 {\rm tr}\left((G^{k+1})^{\top} \tilde{C}G^{k}\right).\notag
\end{align}
We claim that ${\rm tr}((G^{k+1})^{\top} \tilde{C} G^{k}) \geq{\rm tr}((G^{k})^{\top} \tilde{C}G^{k})$ due to the observation that
  \begin{equation*}
  	  {\rm tr}\left((G^{k+1})^{\top} \tilde{C}G^{k}\right) =\sum_{i=1}^n {\rm tr}\left( V_{{\tilde{C}_i^\top} G^k}U_{{\tilde{C}_i^\top} G^k}^{\top} {\tilde{C}_i^\top} G^{k}\right) =\sum_{i=1}^n \Vert {\tilde{C}_i^\top} G^k\Vert_* \ge \sum_{i=1}^n{\rm tr}({\tilde{C}_i^\top} G^k G^{k\top}_i).
  \end{equation*}
  Hence, we conclude that
  $$
  f(G^{k+1})-f(G^{k}) \geq{\rm tr}\left((G^{k+1}-G^{k})^{\top} \tilde{C}(G^{k+1}-G^{k})\right) \geq a_{0} \cdot\|G^{k+1}-G^{k}\|_{F}^{2}
  $$
  with $a_{0}=\lambda_{\min} \left( A \circ \Delta + (A-{\mu\cdot 1_{nd}1_{nd}^\top})\circ G^\star G^{\star\top}+\alpha I_{nd} \right)>0$ which is from the lower bound of the stepsize $\alpha$ in assumption (iii) of Theorem \ref{thm:convergence-rate}. \\
  (b): Assume that for all $k \geq 0$, $\hat{g}^k\in \mathcal{O}(d)$ satisfies ${\rm d}(G^k,\hat{G})=\Vert G^k \hat{g}^k-\hat{G} \Vert_F$. Recall that $D:\odn \rightarrow \mathbb{S}^{nd}$ defined in \eqref{eq: defineDG}. From Lemma \ref{prop: globalfix} we know that $\hat{G}$ is a fixed point which implies $D(\hat{G})\hat{G}=\bm{0}$ from Lemma \ref{lemma-equi}(b). This together with
Lemma \ref{lemma-equi}(d) shows that
  \begin{align}
  f(\hat{G})-f(G^{k})
  &={\rm tr}(\hat{G}^{\top} \tilde{C} \hat{G})-{\rm tr}(G^{k\top} \tilde{C} G^k)=\sum\limits_{i=1}^n \Vert {\tilde{C}_i^\top} \hat{G}\Vert_*-{\rm tr}(G^{k\top} \tilde{C} G^k)\notag\\
  &={\rm tr}\left(G^{k\top} [{\rm Diag}([U_{\tilde{C}_1^\top \hat{G}}\Sigma_{\tilde{C}_1^\top \hat{G}}U_{\tilde{C}_1^\top \hat{G}}^\top;\dots;U_{\tilde{C}_n^\top \hat{G}}\Sigma_{\tilde{C}_n^\top \hat{G}}U_{\tilde{C}_n^\top \hat{G}}^\top])-\tilde{C}] G^{k} \right)\notag\\
  &={\rm tr}\left((G^{k}\hat{g}^k-\hat{G})^{\top} D(\hat{G})(G^{k}\hat{g}^k-\hat{G})\right) \notag\\
  &\leq\left(\|\tilde{C}\|+\|\tilde{C} \hat{G}\|_{\infty}\right) {\rm d}^{2}(G^{k}, \hat{G})\leq a_1\cdot{\rm d}^{2}(G^{k}, \hat{G})\notag
  \end{align}
where $a_1:=3n{\mu}$, since from \eqref{dist-GstarGhat-ineq}, \eqref{eq: agstargstarnorm} and the assumptions (i)-(iii) in Theorem \ref{thm:convergence-rate}, one has that
\begin{align}
   & \| \tilde{C} \| +  \Vert \tilde{C} \hat{G}\Vert_{\infty}  \leq \| C \| + \| C\hat{G} \|_\infty + 2 \alpha \notag \\
   & \leq n{\mu} + \| (A-{\mu\cdot 1_{nd}1_{nd}^\top}) \circ  G^{\star}G^{\star \top} \| + \| A \circ \Delta \|  + \| (A\circ G^\star G^{\star \top})\hat{G} \|_\infty  + \| (A \circ \Delta)\hat{G} \|_\infty +  2\alpha   \notag \\
      & \leq n{\mu} + \| W - {\mu \cdot 1_n1_n^\top}\| + \| A \circ \Delta \| + n{\mu} + \| ((A - {\mu\cdot 1_{nd}1_{nd}^\top})\circ G^\star G^{\star \top})\hat{G} \|_\infty    + \| (A \circ \Delta) G^\star \|_\infty\notag \\
   &\quad  + \frac{4\sqrt{d} \left(   \left\| W-{\mu \cdot 1_n1_n^\top} \right\| + \left\| A \circ \Delta \right\|  \right)\| A \circ \Delta \| }{\sqrt{n}\mu}+  2\alpha    \notag \\
      &\leq 3n{\mu}.
  \end{align} 
(c): From the definition we know that $D_\alpha$ is single-valued mapping which is not affected by different choices of singular value decomposition. Then by the definition of $G_i^{k+1}=U_{{\tilde{C}_i^\top} G^k}V_{\tilde{C}_i^\top G^k}^\top$ for arbitrary $(U_{{\tilde{C}_i^\top}G^k},V_{\tilde{C}_i^\top G^k})\in \Xi({\tilde{C}_i^\top}G)$, $i\in[n]$, we have
\begin{align}
  D(G^k)G^{k}&=[{\rm Diag}([U_{\tilde{C}_1^\top G^k}\Sigma_{\tilde{C}_1^\top G^k}U_{\tilde{C}_1^\top G^k}^\top;\dots;U_{\tilde{C}_n^\top G^k}\Sigma_{\tilde{C}_n^\top G^k}U_{\tilde{C}_n^\top G^k}^\top])-\tilde{C}]G^{k}\notag\\
  &={\rm Diag}([U_{\tilde{C}_1^\top G^k}\Sigma_{\tilde{C}_1^\top G^k}U_{\tilde{C}_1^\top G^k}^\top;\dots;U_{\tilde{C}_n^\top G^k}\Sigma_{\tilde{C}_n^\top G^k}U_{\tilde{C}_n^\top G^k}^\top])(G^{k}-G^{k+1})\notag
\end{align}
Then it follows that
\begin{align}
  \rho_\alpha(G^{k})=\Vert D_{\alpha}(G^k)G^k\Vert_F
  &=\Vert{\rm Diag}([U_{\tilde{C}_1^\top G^k}\Sigma_{\tilde{C}_1^\top G^k}U_{\tilde{C}_1^\top G^k}^\top;\dots;U_{\tilde{C}_n^\top G^k}\Sigma_{\tilde{C}_n^\top G^k}U_{\tilde{C}_n^\top G^k}^\top])(G^{k+1}-G^{k})\Vert_F \notag\\
  &\leq\|\tilde{C} G^{k}\|_{\infty}\|G^{k+1}-G^{k}\|_{F}\leq a_{2} \cdot\|G^{k+1}-G^{k}\|_{F}.\notag
\end{align}
where $a_2:=2n^{5/4}\mu$ is due to Proposition \ref{prop: stayintheball} and the assumptions (i)-(iii) in Theorem \ref{thm:convergence-rate} which implies that
\begin{align*}
	\| \tilde{C}G^k \|_\infty  & \leq \| (A \circ G^\star G^{\star\top})G^k \|_\infty + \| (A \circ \Delta)G^k\|_\infty + \alpha \\
	& \leq n{\mu} +   \frac{n{\mu}}{10} + \|  (A \circ \Delta) G^\star \|_\infty + \| A \circ \Delta\|{\rm d}(G^k,G^\star) + \alpha \\
	& \leq 2n^{5/4}{\mu}.
\end{align*}
The proof is complete.
\end{proof}


With Theorem \ref{EB}, Proposition \ref{prop: stayintheball} and Proposition \ref{pro: 3conditioner}, now we are ready to give the proof of Theorem \ref{thm:convergence-rate}.

\begin{proof}[Proof of Theorem \ref{thm:convergence-rate}]
By Theorem~\ref{EB}, Proposition \ref{prop: stayintheball} and~\ref{pro: 3conditioner}, it follows that
\begin{equation}\label{linear-key0}
f(\hat{G}) - f(G^k)\leq a_1\cdot \textrm{d}^2(G^k,\hat{G})\leq \frac{100a_1}{n^2{\mu}^2} \rho_{\alpha}(G^k)^2\leq \frac{100a_1a_2^2}{n^2{\mu}^2}\|G^{k+1}-G^k\|_F^2\leq \frac{100a_1a_2^2}{a_0n^2{\mu}^2}\left( f(G^{k+1})  - f(G^k) \right). 
\end{equation}
Then we have that 
\begin{align}\label{linear-key1}
f(\hat{G}) - f(G^{k+1}) 
&= f(\hat{G}) - f(G^k)  - \left( f(G^{k+1}) - f(G^k) \right)\notag\\
&\leq \left( \frac{100a_1a_2^2}{a_0n^2{\mu}^2} - 1 \right) \left( f(G^{k+1}) - f(\hat{G}) + f(\hat{G}) - f(G^k) \right).
\end{align}
		Since $f(\hat{G})\ge f(G^k)$ for $k\in\mathbb{N}$, we may assume without loss of generality that $a'=\tfrac{100a_1a_2^2}{a_0n^2{\mu}^2}>1$. Thus, from \eqref{linear-key1} one has that
		$$ f(\hat{G})-f(G^{k+1}) \le \frac{a'-1}{a'}\left( f(\hat{G}) - f(G^k) \right), $$
		which yields with $\lambda=\tfrac{a'-1}{a'}\in(0,1)$ that
		\begin{equation}\label{linear-key2}
		f(\hat{G})-f(G^k) \le \left( f(\hat{G})-f(G^0) \right) \lambda^k.
		\end{equation}
		Furthermore, from \eqref{linear-key0} and \eqref{linear-key2} we know that
		\begin{eqnarray*}
			{\rm d}^2(G^k,\hat{G}) 
			\le \frac{100a_2^2}{a_0n^2{\mu}^2}\left( f(\hat{G})-f(G^k) \right) 
			\le \frac{100a_2^2}{a_0n^2{\mu}^2} \left( f(\hat{G})-f(G^0) \right) \lambda^k,
		\end{eqnarray*}
		which implies that
		$$ \textrm{d}(G^k,\hat{G}) \le a \left( f(\hat{G})-f(G^0) \right)^{1/2} \lambda^{k/2} $$
		with $a:=\sqrt{\tfrac{100a_2^2}{a_0n^2{\mu}^2}}$. This completes the proof. 
\end{proof}

Next, we specialize Theorem~\ref{thm:convergence-rate} to the Gaussian noise setting with the measurement graph assumed to be the Erd\"os-R\'enyi graph $\mathcal{G}([n],p)$ satisfying: 
\begin{itemize}
\item 
$W_{ij}$ are i.i.d. random variables following the Bernoulli distribution taking $1$ with probability $p$ (associated with $n$), otherwise being $0$, and $W_{ji}= W_{ij}$ for each $i<j$.
\item $W_{ii}= \mu = \frac{\sum_{i<j}W_{ij}}{n(n-1)/2}$ for each $i\in [n]$.
\end{itemize}

	\begin{cor}[Specialization to Gaussian noise setting] \label{cor: Gaussianlinear}
		Suppose that \\
{\rm (i)} 
the measurement graph is the Erd\"os-R\'enyi graph $\mathcal{G}([n],p)$ with $p \geq  \frac{\kappa_0 d}{\sqrt{n}}$;\\ 		
		{\rm (ii)} 
		$\Delta=\sigma Z$,
where $0 <\sigma \leq \frac{\kappa_1n^{1/4}p^{1/2}}{d} $, $Z\in\mathbb{S}^{nd}$ with  $Z_{ii}=\bm{0}$ for $i\in[n]$ and $Z_{ij}$ are i.i.d. standard Gaussian variables for $i\neq j$;\\
{\rm (iii)} 
the stepsize $\frac{\kappa_0 n^{3/4}p}{d^{1/2}} \leq \alpha \leq \frac{\kappa_1 np}{d^{1/2}}$, \\
where $\kappa_0>0$ is a large constant and $\kappa_1>0$ is a small constant.
Then for sufficiently large $n\in\mathbb{N}$, the sequence of iterates $\{G^k\}_{k\ge0}$ generated by Algorithm~\ref{alg: gpm} with high probability satisfies
		\begin{eqnarray*}
			f(\hat{G}) - f(G^k) \le \left( f(\hat{G})-f(G^0) \right) \lambda^k\quad \text{and}\quad
			{\rm d}(\hat{G},G^k) \leq a \left( f(\hat{G})-f(G^0) \right)^{1/2} \lambda^{k/2},
		\end{eqnarray*}
		where $a>0$, $\lambda\in(0,1)$ are constants that depend only on $n$, $d$, $p$, $\alpha$, and $\hat{G}$ is any global maximizer of problem~\eqref{Problem}.
	\end{cor}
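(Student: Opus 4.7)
The strategy is to verify the three deterministic hypotheses of Theorem~\ref{thm:convergence-rate} hold with high probability under the random Erd\"os-R\'enyi and Gaussian models, and then invoke Theorem~\ref{thm:convergence-rate} directly. The random quantities to control are $\mu$, $\|W-\mu\cdot 1_n 1_n^\top\|$, $\|A\circ\Delta\|$, $\|(A\circ\Delta)G^\star\|_\infty$, and $\max_i \|((A-\mu\cdot 1_{nd}1_{nd}^\top)\circ G^\star G^{\star\top})_i^\top \hat{G}\|$. First, by Chernoff-Bernstein, $\mu=\frac{2}{n(n-1)}\sum_{i<j}W_{ij}$ concentrates around $p$ and $\max_i \mathrm{deg}(i)\lesssim np$ with high probability, using $np\gtrsim \sqrt{n}\to\infty$. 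The sharp bound on the centered Erd\"os-R\'enyi adjacency matrix (obtainable from Lemma~\ref{lemma: aop} applied to $W-p\cdot 1_n 1_n^\top$) gives $\|W-p\cdot 1_n 1_n^\top\|\lesssim \sqrt{np}$ whp, and the same after replacing $p$ by $\mu$.

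To bound $\|A\circ\Delta\|$, I view $A\circ \Delta$ as an $nd\times nd$ symmetric matrix whose $((i,k),(j,l))$-entry is $W_{ij}\sigma z_{ij,kl}$; conditional on $W$, applying Lemma~\ref{lemma: aop} with $b_{(i,k),(j,l)}=W_{ij}$ produces row/column sums-of-squares equal to $d\cdot\mathrm{deg}(i)\lesssim dnp$, so $\|A\circ\Delta\|\lesssim \sigma\sqrt{dnp}$ whp up to polylogarithmic factors. For $\|(A\circ\Delta)G^\star\|_\infty$, rotational invariance $Z_{ij}^\top G_j^\star\stackrel{d}{=}Z_{ij}$ shows that, conditional on $W$, the $d\times d$ matrix $(A\circ\Delta)_i^\top G^\star=\sigma \sum_j W_{ij}Z_{ij}^\top G_j^\star$ has independent centered Gaussian entries of variance $\sigma^2\mathrm{deg}(i)$, so a standard Gaussian matrix bound plus a union bound over $i$ gives $\|(A\circ\Delta)G^\star\|_\infty\lesssim \sigma\sqrt{dnp\log n}$ whp. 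Under $p\geq \kappa_0 d/\sqrt{n}$ and $\sigma\leq \kappa_1 n^{1/4}p^{1/2}/d$ with $\kappa_0$ large and $\kappa_1$ small, these bounds are respectively dominated by $\tfrac{n^{3/4}\mu}{60 d^{1/2}}$ and $\tfrac{n\mu}{10}$ for sufficiently large $n$, verifying hypothesis~(i) of Theorem~\ref{thm:convergence-rate}; hypothesis~(iii) is precisely the assumed range of $\alpha$.

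The main obstacle is verifying hypothesis~(ii), because the global maximizer $\hat{G}$ depends on $W$ and $\Delta$. Writing
\begin{equation*}
\big((A-\mu\cdot 1_{nd}1_{nd}^\top)\circ G^\star G^{\star\top}\big)_i^\top \hat{G} \;=\; G_i^\star \sum_{j=1}^n (W_{ij}-\mu)\, G_j^{\star\top}\hat{G}_j,
\end{equation*}
I would decompose $\hat{G}_j=G_j^\star \hat{g}^\star+(\hat{G}_j-G_j^\star\hat{g}^\star)$ where $\hat{g}^\star\in\mathcal{O}(d)$ attains $\mathrm{d}(\hat{G},G^\star)$. The ``aligned'' part gives $\hat{g}^\star\sum_j(W_{ij}-\mu)$, whose operator norm is $\lesssim \sqrt{np\log n}$ whp by Bernstein plus a union bound over $i$, independent of $\hat{G}$. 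The ``residual'' part is controlled by Cauchy-Schwarz:
\begin{equation*}
\left\|\sum_{j}(W_{ij}-\mu)G_j^{\star\top}(\hat{G}_j-G_j^\star\hat{g}^\star)\right\|_F \;\leq\; \sqrt{\textstyle\sum_j (W_{ij}-\mu)^2}\cdot \mathrm{d}(\hat{G},G^\star) \;\lesssim\; \sqrt{np}\cdot \mathrm{d}(\hat{G},G^\star),
\end{equation*}
and invoking Lemma~\ref{dist-GstarGhat} with the already-established bounds yields $\mathrm{d}(\hat{G},G^\star)\lesssim \sqrt{d/n}+\sigma d/\sqrt{n}$. Straightforward arithmetic then shows the overall bound is $\leq \tfrac{n\mu}{10}$ for sufficiently large $n$ under the corollary's hypotheses. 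Intersecting all these high-probability events and applying Theorem~\ref{thm:convergence-rate} on the resulting good event produces the claimed linear convergence of both $f(\hat{G})-f(G^k)$ and $\mathrm{d}(G^k,\hat{G})$, with $a$ and $\lambda$ inherited from Theorem~\ref{thm:convergence-rate}.
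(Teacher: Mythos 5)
Your overall strategy is the same as the paper's: verify the three deterministic hypotheses of Theorem~\ref{thm:convergence-rate} on a high-probability event and then invoke that theorem; your treatments of $\mu$, $\|A\circ\Delta\|$ (conditioning on the degree event and applying Lemma~\ref{lemma: aop}), and $\|(A\circ\Delta)G^\star\|_\infty$ (rotational invariance, Gaussian operator-norm bound, union bound over $i$) coincide with the paper's. Where you genuinely differ is hypothesis (ii): the paper bounds $\max_i\|((A-\mu\cdot 1_{nd}1_{nd}^\top)\circ G^\star G^{\star\top})_i^\top\hat G\|$ by an $\varepsilon$-net over unit vectors $u,v$ (Fact~\ref{fact: epsilonnet}) combined with Bernstein's inequality applied to $\sum_j (W_{ij}-p)\,u^\top G_j^{\star\top}\hat G_j v$, treating the coefficients involving $\hat G_j$ as fixed; you instead split $\hat G_j=G_j^\star\hat g^\star+(\hat G_j-G_j^\star\hat g^\star)$, bound the aligned part by the scalar Bernstein bound $|\sum_j(W_{ij}-\mu)|\lesssim\sqrt{np\log n}$, and bound the residual deterministically by Cauchy--Schwarz together with Lemma~\ref{dist-GstarGhat}. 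Your route has the advantage of decoupling the randomness from the data-dependent maximizer $\hat G$ in a transparent way (the paper's net-plus-Bernstein step quietly conditions on coefficients that depend on $W$ through $\hat G$), at the price of a cruder bound $\sqrt{np}\,{\rm d}(\hat G,G^\star)$ whose smallness must be extracted from the constants $\kappa_0,\kappa_1$; the arithmetic does close, since $\sqrt{d/p}+\sigma d/\sqrt p\lesssim n^{1/4}(\kappa_0^{-1/2}+\kappa_1)$ under the stated hypotheses.

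Three technical points you should repair. First, Lemma~\ref{lemma: aop} as stated is for matrices with Gaussian entries, so it does not apply to the centered Bernoulli matrix $W-p\cdot 1_n1_n^\top$; the bound $\|W-\mu\cdot 1_n1_n^\top\|\lesssim\sqrt{np}$ is correct but needs a bounded-entry (Bernoulli) concentration result, as the paper obtains from \cite[Lemma~5.1]{gao2020exact}, together with the $|\mu-p|$ estimate. Second, your hedge ``$\|A\circ\Delta\|\lesssim\sigma\sqrt{dnp}$ up to polylogarithmic factors'' must not be multiplicative: with $\sigma$ as large as $\kappa_1 n^{1/4}p^{1/2}/d$, an extra factor $\sqrt{\log n}$ would violate the requirement $\|W-\mu\cdot 1_n1_n^\top\|+\|A\circ\Delta\|\leq \frac{n^{3/4}\mu}{60 d^{1/2}}$ for large $n$; fortunately the Bandeira--van Handel bound is additive in $\sqrt{\log n}$, which is dominated by $\sqrt{ndp}$ here, so the clean bound $\sigma\sqrt{ndp}$ is what you should state. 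Third, your intermediate estimate ${\rm d}(\hat G,G^\star)\lesssim\sqrt{d/n}+\sigma d/\sqrt n$ is a slip: Lemma~\ref{dist-GstarGhat} with $K\lesssim\sqrt{np}+\sigma\sqrt{ndp}$ and $\mu\asymp p$ gives ${\rm d}(\hat G,G^\star)\lesssim\sqrt{d/p}+\sigma d/\sqrt p$, which can be as large as order $n^{1/4}$; your final inequality still holds with this correct bound, but only because $p\geq\kappa_0 d/\sqrt n$ with $\kappa_0$ large and $\kappa_1$ small, so the dependence on the constants should be made explicit rather than dismissed as ``straightforward arithmetic.''
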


Before presenting the proof of Corollary \ref{cor: Gaussianlinear}, we compare it with the result in \cite[Theorem 3.2]{ling2020improved}, which shows convergence of the GPM (with $\alpha=0$) under the Gaussian noise level $\sigma=O(\frac{\sqrt{n}}{\sqrt{d}(\sqrt{d}+\sqrt{\log n})})$ with complete measurement graph setting. When Corollary \ref{cor: Gaussianlinear} reduced to complete measurement case, the Gaussian noise level our result can tolerate is $\sigma=O(\frac{n^{1/4}}{d})$ since it is a direct consequence of above determistic result while \cite{ling2020improved} utilizes the structural information of Gaussian noise through the leave-one-out technique and derives a better control of \eqref{eq: errorinfty}. The conclusion of Corollary \ref{cor: Gaussianlinear} will hold with the same noise regime as in \cite{ling2020improved} when applied with similar technique.


\begin{lemma}[Concentration inequalities]
	Under the assumptions of Corollary \ref{cor: Gaussianlinear}, there exists a constant $c_0>0$ such that for sufficiently large $n\in\mathbb{N}$, it follows that
	\begin{align}
		& |\mu - p | \leq  c_0 \sqrt{p}\log n/n   \label{eq: bernmean},\\
		& \left\| (A-{\mu\cdot 1_{nd}1_{nd}^\top}) \circ G^{\star}G^{\star \top}  \right\| \leq 2c_0 \sqrt{2n \mu}, \label{Concen-key1}\\
		&  \max_{i\in [n]} \left\| \left((A - {\mu\cdot 1_{nd}1_{nd}^\top} ) \circ G^\star G^{\star\top} \right)_i^\top \hat{G} \right\|  \leq  c_0 \sqrt{2nd \mu \log n}, \label{Concen-key2}\\
		& \| A \circ Z \| \leq c_0 \sqrt{2nd \mu} \label{Concen-key3}
	\end{align}
hold with probability at least $1-9n^{-10}$.
	
\end{lemma}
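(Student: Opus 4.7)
The plan is to tackle the four inequalities in the order that builds on previous ones: first \eqref{eq: bernmean}, then \eqref{Concen-key1}, then \eqref{Concen-key3}, and finally \eqref{Concen-key2}, the last being the most delicate. A union bound at the end will cover the $9n^{-10}$ probability budget.

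For \eqref{eq: bernmean}, note that $\mu = \frac{2}{n(n-1)}\sum_{i<j}W_{ij}$ is the empirical mean of $\binom{n}{2}$ i.i.d.\ Bernoulli$(p)$ random variables, each bounded in $[0,1]$ with variance $p(1-p)\le p$. I would apply Bernstein's inequality with deviation $t = c_0 \sqrt{p}\log n / n$; the resulting exponent is of order $-\log^2 n$, which is comfortably better than $n^{-10}$.

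For \eqref{Concen-key1}, I would first reuse the identity derived inside the proof of Lemma \ref{dist-GstarGhat} (see \eqref{eq: agstargstarnorm}), namely
$\|(A - \mu\cdot 1_{nd}1_{nd}^\top)\circ G^\star G^{\star\top}\| = \|W - \mu\cdot 1_n 1_n^\top\|$,
to reduce an $nd\times nd$ problem to an $n\times n$ one. Since $W_{ii}=\mu$ the diagonal of $W - \mu 1_n 1_n^\top$ vanishes, so I write $W - \mu 1_n 1_n^\top = \widetilde{W} + (p-\mu)(J-I)$ where $\widetilde{W}_{ij} = W_{ij}-p$ for $i\neq j$ and $\widetilde{W}_{ii}=0$. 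A symmetric-matrix concentration bound (either the Bandeira--van Handel result in the spirit of Lemma \ref{lemma: aop} adapted to Bernoulli entries, or a matrix Bernstein with variance proxy $np(1-p)$) gives $\|\widetilde{W}\| \lesssim \sqrt{np}$, while the second term is bounded by $n|p-\mu| \lesssim \sqrt{p}\log n$ using \eqref{eq: bernmean}. Combining yields the claim with $\sqrt{n\mu}$-order bound.

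For \eqref{Concen-key3}, I would unfold the block structure: $A\circ Z$ is a symmetric $nd\times nd$ matrix whose $(i,k),(j,l)$ entry is $W_{ij}Z_{(i,k),(j,l)}$, independent Gaussian (times the deterministic Bernoulli $W_{ij}$) for $i\neq j$, and zero for $i=j$. This fits Lemma \ref{lemma: aop} with $b_{(i,k),(j,l)}=W_{ij}$. The required row sums satisfy $\sum_{j,l}W_{ij}^2 = d\sum_j W_{ij}$, which is $O(dn\mu)$ with high probability by a Chernoff bound on the Erd\H{o}s--R\'enyi degrees. Plugging this into Lemma \ref{lemma: aop} with $\varepsilon=1/2$ and $t = \sqrt{2C\log n}$ gives $\|A\circ Z\| \le c_0\sqrt{nd\mu}$ as desired.

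The main obstacle is \eqref{Concen-key2}, because $\hat{G}$ is a random quantity coupled to $W$ and $\Delta$. My plan is to split
\[
\|X_i^\top \hat{G}\| \le \|X_i^\top G^\star \hat{g}^\star\| + \|X_i\|\cdot \mathrm{d}(G^\star,\hat{G}),
\]
with $X := (A-\mu 1_{nd}1_{nd}^\top)\circ G^\star G^{\star\top}$ and $\hat{g}^\star$ the aligning orthogonal factor. The key observation is that, since $G_j^{\star\top}G_j^\star = I_d$, one has the clean identity
\[
X_i^\top G^\star = \Bigl(\sum_{j\ne i}(W_{ji}-\mu)\Bigr) G_i^\star,
\]
so $\|X_i^\top G^\star\| = |\sum_{j\ne i}(W_{ji}-\mu)|$, which Bernstein plus a union bound over $i\in[n]$ bounds by $C\sqrt{n\mu\log n}$. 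A second clean identity, $X_i^\top X_i = \bigl(\sum_j (W_{ji}-\mu)^2\bigr) I_d$, together with a Bernstein bound on that scalar sum, gives $\|X_i\| \le C\sqrt{n\mu}$ uniformly in $i$. Finally, $\mathrm{d}(G^\star,\hat{G})$ is controlled via Lemma~\ref{dist-GstarGhat} in terms of $\|W - \mu 1_n 1_n^\top\|$ and $\|A\circ\Delta\|$, both already bounded by \eqref{Concen-key1} and \eqref{Concen-key3}. Assembling these pieces and using the parameter regime from Corollary \ref{cor: Gaussianlinear}, the dominant term is $\sqrt{n\mu\log n}$, which is absorbed into $c_0\sqrt{2nd\mu\log n}$. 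The hardest part is choosing the sharp bound $\|X_i\|\le C\sqrt{n\mu}$ (rather than the loose Frobenius bound $\sqrt{nd\mu}$), since it is exactly this improvement that allows the perturbation term $\|X_i\|\cdot \mathrm{d}(G^\star,\hat{G})$ to fit inside the target $\sqrt{nd\mu\log n}$.
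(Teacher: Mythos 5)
Your treatment of \eqref{eq: bernmean}, \eqref{Concen-key1} and \eqref{Concen-key3} matches the paper's proof in all essentials: Bernstein's inequality for the empirical mean $\mu$, reduction via \eqref{eq: agstargstarnorm} to $\|W-\mu\cdot 1_n1_n^\top\|$ plus a spectral bound on the centered Bernoulli matrix and the term $n|\mu-p|$, and for $\|A\circ Z\|$ conditioning on the bounded-degree event $\max_i\sum_j W_{ij}\le 2np$ and invoking Lemma \ref{lemma: aop} with $b_{(i,k),(j,l)}=W_{ij}$. Your two block identities $X_i^\top G^\star=\bigl(\sum_{j\ne i}(W_{ji}-\mu)\bigr)G_i^\star$ and $X_i^\top X_i=\bigl(\sum_{j\ne i}(W_{ji}-\mu)^2\bigr)I_d$ are also correct.

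The gap is in \eqref{Concen-key2}, where your decomposition $\|X_i^\top\hat G\|\le\|X_i^\top G^\star\|+\|X_i\|\,{\rm d}(G^\star,\hat G)$ cannot reach the stated bound. Under the corollary's noise regime the dominant contribution to $K=\|W-\mu\cdot 1_n1_n^\top\|+\|A\circ\Delta\|$ is $\|A\circ\Delta\|=\sigma\|A\circ Z\|\lesssim \kappa_1 n^{3/4}\mu/\sqrt d$, so Lemma \ref{dist-GstarGhat} only gives ${\rm d}(G^\star,\hat G)\lesssim \kappa_1 n^{1/4}$ (this is precisely the scale allowed by assumption (i) of Theorem \ref{thm:convergence-rate}, which permits ${\rm d}(\hat G,G^\star)$ up to $n^{1/4}/15$). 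Combined with the sharp bound $\|X_i\|\asymp\sqrt{n\mu}$, your perturbation term is of order $\kappa_1 n^{3/4}\sqrt{\mu}$, whereas the target is $c_0\sqrt{2nd\mu\log n}$; the ratio $n^{1/4}/\sqrt{d\log n}$ diverges for fixed $d$, so the claimed inequality \eqref{Concen-key2} is not established at the noise level the corollary allows (your bound would incidentally still verify assumption (ii) of Theorem \ref{thm:convergence-rate}, i.e.\ $\le n\mu/10$, but that is a weaker statement than the lemma). The paper avoids this loss by never splitting off $\hat G$: it keeps $G_j^{\star\top}\hat G_j$ inside the sum, applies the $\tfrac12$-net bound of Fact \ref{fact: epsilonnet} (net of cardinality $6^d$), and uses Bernstein plus union bounds over the net and over $i\in[n]$ with $t=c_0\sqrt{ndp\log n}$, which yields the $\sqrt{nd\mu\log n}$ rate directly; the price is that it treats the coefficients $u^\top G_j^{\star\top}\hat G_j v$ as bounded scalars even though $\hat G$ is correlated with $W$ — a subtlety your decomposition handles honestly but at a cost in the rate that the lemma's statement does not tolerate.
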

\begin{proof} Let $c_0>0$ be a sufficiently large constant to meet all requirements of the following analysis. Since $\mu-p = \sum_{i<j}\frac{2}{n(n-1)}(W_{ij}-p)$ is the sum of independent mean-zero random variables, and it follows that $$\left\vert\frac{2}{n(n-1)}(W_{ij}-p)\right\vert\leq \frac{2}{n(n-1)},\quad \sum_{i<j} \mathbb{E}\left(\frac{2}{n(n-1)}(W_{ij}-p)\right)^2=\frac{2p(1-p)}{n(n-1)}. $$
We know from the Bernstein's inequality for bounded distributions \cite[Theorem 2.8.4]{vershynin2018high} that for each $t\ge0$ one has
	\begin{align}\label{Berns-ineq}
		P\left(|\mu - p| \geq t\right) \leq 2 \exp\left(-\frac{t^2/2}{\frac{2p(1-p)}{n(n-1)} + \frac{2t}{3n(n-1)}} \right).
	\end{align}
 Let $t = c_0 \sqrt{p}\log n/n$. By the facts that $n-1 \geq \frac{n}{2}$ and $np \geq \log n$ when $n$ is large enough, it follows that 
$$\frac{t^2/2}{\frac{2p(1-p)}{n(n-1)} + \frac{2t}{3n(n-1)}}=\frac{3c_0^2 p \log^2 n\cdot (n-1)}{12np(1-p) + 4c_0 \sqrt{p}\log n}\geq\frac{3c_0^2  \log^2 n}{24(1-p) + 8c_0 \sqrt{p}} \geq \frac{c_0^2 \log^2 n}{8+ 8c_0/3},$$
and combining this with \eqref{Berns-ineq} implies that
\begin{align*}
	P\left(|\mu - p| \geq c_0 \sqrt{p}\log n/n \right) \leq 2 \exp\left(-\frac{c_0^2 \log^2 n}{8+ 8c_0/3}\right).
\end{align*}
Since $c_0$ is set to sufficient large, we obtain $P\left(|\mu - p| \geq c_0 \sqrt{p}\log n/n \right) \leq n^{-10}$, which implies that \eqref{eq: bernmean} holds with probability at least $1-n^{-10}$.
As a result, also with probability at least $1-n^{-10}$, it follows that
\begin{align} \label{eq: mupp2}
	|\mu - p | \leq  c_0 \sqrt{p}\log n/n \leq \frac{p}{2}
\end{align}  as long as $c_0\leq\frac{np}{2\sqrt{p} \log n}$ (such $c_0$ always exists when $n$ is enough large since $\sqrt{n}p\geq \kappa_0$).
	

	Note that from \eqref{eq: agstargstarnorm} it follows that
	\begin{align}\label{key-triangle-ineq}
		  \| (A-{\mu\cdot 1_{nd}1_{nd}^\top}) \circ  G^{\star}G^{\star \top} \| 
		&=  \|W - {\mu \cdot 1_n1_n^\top}\| \notag\\
		& \leq \|W-\mu I_n -  ( {p \cdot 1_n1_n^\top}-pI_n)\| + |\mu-p|\cdot \|1_n1_n^\top - I_n \|  \notag\\
		& \leq \|W-\mu I_n - \mathbb{E}(W-\mu I_n)\| + n\cdot |\mu-p|.  
	\end{align} 	
	Also, note that from \cite[Lemma 5.1]{gao2020exact} it follows that
	\begin{equation}\label{prob-key1}
	P\left(\|W-\mu I_n - \mathbb{E}(W-\mu I_n)\|\leq c_0\sqrt{np}\right)\ge 1-n^{-10},
	\end{equation}
	and from the fact that \eqref{eq: bernmean} holds with probability at least $1-n^{-10}$,  $\log n\leq \sqrt{n}$ we know that
	\begin{equation}\label{prob-key2}
		P\left(n\cdot |\mu-p|>c_0\sqrt{np}\right)\leq P\left(|\mu-p|>c_0 \sqrt{p}\log n/n\right)\leq n^{-10}.
	\end{equation}
Thus, we obtain by the union bound together with \eqref{key-triangle-ineq}, \eqref{prob-key1} and \eqref{prob-key2}  that
\begin{align}\label{key-triangle-ineq-2}
	&P\left(\| (A-{\mu\cdot 1_{nd}1_{nd}^\top}) \circ  G^{\star}G^{\star \top} \|>2c_0\sqrt{np}\right) \notag\\
  & \leq P\left(\|W-\mu I_n - \mathbb{E}(W-\mu I_n)\|>c_0\sqrt{np}\right) + P\left(n\cdot |\mu-p|>c_0\sqrt{np}\right)  \notag\\
  & \leq  2n^{-10}.
\end{align}	
Consequently we have
\begin{align}\label{key-dis-p}
&P\left(\| (A-{\mu\cdot 1_{nd}1_{nd}^\top}) \circ  G^{\star}G^{\star \top} \|\leq 2c_0\sqrt{2n\mu}\right)\notag\\
&=  1-P\left(\| (A-{\mu\cdot 1_{nd}1_{nd}^\top}) \circ  G^{\star}G^{\star \top} \Vert> 2c_0\sqrt{2n\mu}\right)\notag\\
&\geq  1-P\left(\| (A-{\mu\cdot 1_{nd}1_{nd}^\top}) \circ  G^{\star}G^{\star \top} \|>2c_0\sqrt{np}\right)-P(p>2\mu)\notag\\
&\geq 1-3n^{-10}
\end{align}
because from 
\eqref{eq: mupp2} one has
	$2\mu\geq p$ with probability at least $1-n^{-10}$.


	Let $N$ be an $\frac{1}{2}$-net of $\{x\in\mathbb{R}^d \mid  \Vert x\Vert_2=1\}$. It is known that the net $N$ can be chosen with $|N|\leq 6^d$. By the definition of the operator norm as well as Fact \ref{fact: epsilonnet} for $i \in [n]$, it follows that
	\begin{align}\label{property3-keyineq}
		 \left\| \left((A - {\mu\cdot 1_{nd}1_{nd}^\top} ) \circ G^\star G^{\star\top} \right)_i^\top \hat{G} \right\| 
		 &= \left\| \sum_{ j \in [n] \setminus \{i\} }  (W_{ij}-\mu) G_i^\star G_j^{\star \top}\hat{G}_j \right\|= \left\| \sum_{ j \in [n] \setminus \{i\} }  (W_{ij}-\mu) G_j^{\star \top}\hat{G}_j \right\|\notag\\
		&\leq 4 \max_{u \in N, v \in N}\left| \sum_{ j \in [n] \setminus \{i\} }  (W_{ij}-p) \cdot u^\top G_j^{\star \top}\hat{G}_j v \right| + n\cdot|\mu-p|.
	\end{align}
	Next, we are going to get an upper bound of $\max_{u \in N, v \in N}\left| \sum_{ j \in [n] \setminus \{i\} }  (W_{ij}-p) \cdot u^\top G_j^{\star \top}\hat{G}_j v \right|$.
	From the definition we know that for any $u\in N$, $v \in N$ and $i\in[n]$, 
	$$\sum_{ j \in [n] \setminus \{i\} }  (W_{ij}-p) \cdot u^\top G_j^{\star \top}\hat{G}_j v$$ is a sum of independent mean-zero random variables, and it follows that $$\left\vert(W_{ij}-p) \cdot u^\top G_j^{\star \top}\hat{G}_j v\right\vert\leq 1,\quad \sum_{j \in [n] \setminus \{i\}} \mathbb{E}\left((W_{ij}-p) \cdot u^\top G_j^{\star \top}\hat{G}_j v\right)^2\leq np. $$
Then for any $u\in N$, $v \in N$ and $i\in[n]$, again from Bernstein's inequality for bounded distributions, we have for each $t\ge0$ that
	\begin{align*}
		P \left(\left| \sum_{ j \in [n] \setminus \{i\} }  (W_{ij}-p) \cdot u^\top G_j^{\star \top}\hat{G}_j v \right| > t \right) \leq 2 \exp \left(\frac{-t^2/2}{np + t/3} \right),
	\end{align*}
and consequently it follows from the union bound that 
	\begin{align*}
		& P\left( \max_{u \in N, v \in N}\left| \sum_{ j \in [n] \setminus \{i\} }  (W_{ij}-p) \cdot u^\top G_j^{\star \top}\hat{G}_j v \right| > t/4 \right) \leq  2 \times 36^d \exp \left(\frac{-t^2/32}{np + t/12} \right).
	\end{align*}
	Applying again the union bound and with the help of \eqref{property3-keyineq} for $i \in [n]$, we have 
	\begin{align}\label{key3-contr}
		&P\left(  \max_{i \in [n]} \left\| \left((A - {\mu\cdot 1_{nd}1_{nd}^\top} ) \circ G^\star G^{\star\top} \right)_i^\top \hat{G} \right\| \leq t \right)\notag\\ 
		&\ge 1-\sum_{i=1}^n P\left( \left\| \left((A - {\mu\cdot 1_{nd}1_{nd}^\top} ) \circ G^\star G^{\star\top} \right)_i^\top \hat{G} \right\| > t \right)\notag\\ 
		&\geq 1-\sum_{i=1}^n P\left( \max_{u \in N, v \in N}\left| \sum_{ j \in [n] \setminus \{i\} }  (W_{ij}-p) \cdot u^\top G_j^{\star \top}\hat{G}_j v \right| > t/8 \right)- \sum_{i=1}^n P\left( n\cdot|\mu-p|>t/2 \right) \notag\\
		& \geq  1-  2 \left(36\exp \left(\frac{-t^2/128}{ d(n p+t/24)} + \frac{\log n}{d} \right) \right)^d - 2n \exp\left(-\frac{t^2/8}{\frac{2np(1-p)}{n-1} + \frac{t}{3(n-1)}} \right).
	\end{align}
	By setting $ t = c_0 \sqrt{n d p \log n }$ for \eqref{key3-contr} and the fact that $\lim \limits_{n \rightarrow \infty} \frac{np}{d \log n} \rightarrow \infty$ (obtained from the assumption $p \geq \frac{\kappa_0d}{\sqrt{n}}$), we arrive at
		\begin{align*}
			P\left(  \max_{i \in [n]} \left\| \left((A - {\mu\cdot 1_{nd}1_{nd}^\top} ) \circ G^\star G^{\star\top} \right)_i^\top \hat{G} \right\| \leq c_0 \sqrt{n d p \log n } \right) \geq 1-n^{-10} .
		\end{align*}
		 Then again from $2\mu\geq p$ with probability at least $1-n^{-10}$, with the similar argument of \eqref{key-dis-p} we know that \eqref{Concen-key2} holds with probability at least $1-2n^{-10}$.
	
	Note that since $Z_{ii}=\bm{0}$, $i\in [n]$, we can assume $A_{ii}= \bm{0}$, $i\in [n]$ without changing the value of $\|A \circ Z\|$.
	Specifically, we use $P_A$ for conditional probability $P(\cdot|A)$. Define the event
	\begin{align}\label{eq: conditionevent}
		\mathcal{A}: = \left\{\max_{i \in [n]}  \sum_{j \in [n]} W_{ij} \leq 2np \right\}.
	\end{align}
	Under the fact that $\lim\limits_{n \rightarrow \infty}\frac{np}{d \log n}= \infty $, 
	with the similar analysis for proving \eqref{eq: bernmean} through the union bound  and Bernstein's inequality for bounded distributions, we have that
	$$P(\mathcal{A}^c)\leq \sum_{i\in[n]} P\left(\sum_{j \in [n]} W_{ij} > 2np\right)=\sum_{i\in[n]} P\left(\sum_{j \in [n]} \left(W_{ij}-p\right) > np\right)\leq n^{-10}.$$ Since $|W_{ij}|\leq 1$ for any $i,j\in[n]$ and $A=W \otimes (1_d 1_d^\top)$, we know from Lemma \ref{lemma: aop} that there exists $c_1>0$ for any $t\ge0$ it follows that
	\begin{align*}
		\sup_{A \in \mathcal{A}} P_A\left(\| A \circ Z \| \geq c_1\sqrt{ndp} + t\right) \leq \exp(-t^2/2).
	\end{align*}
   This implies that for sufficient large $c_0$ and $t=(c_0-c_1)\sqrt{ndp}$ one has that
	\begin{align*}
		\sup_{A \in \mathcal{A}} P_A\left(\| A \circ Z\| \geq c_0\sqrt{ndp} \right) \leq n^{-10}.
	\end{align*}
	Therefore, we conclude that
	\begin{align*}
		P\left(\| A \circ Z\| \geq c_0\sqrt{ndp}\right) \leq P(\mathcal{A}^c) +\sup_{A \in \mathcal{A}} P_A\left(\| A \circ Z\| \geq c_0\sqrt{ndp} \right) \leq 2n^{-10}.
	\end{align*}
Combining it with \eqref{eq: mupp2} would result to $P\left(\| A \circ Z\| \leq c_0\sqrt{2nd \mu}\right) \geq 1-3n^{-10}$ with the similar argument of \eqref{key-dis-p}. Thus, from the union bound we know that \eqref{eq: bernmean}-\eqref{Concen-key3} hold with probability at least $1-9n^{-10}$. The proof is complete.
\end{proof}

Now, we are ready for the proof of Corollary \ref{cor: Gaussianlinear}.
\begin{proof}[Proof of Corollary \ref{cor: Gaussianlinear}]
It suffices to verify the assumptions (i)-(iii) in Theorem \ref{thm:convergence-rate}. From \eqref{eq: agstargstarnorm}, \eqref{Concen-key1}, \eqref{Concen-key3}, and \eqref{eq: mupp2} we know that with high probability
	\begin{align}
	\|W - {\mu \cdot 1_n1_n^\top} \| + \|  A \circ \Delta\| 
	&=\| (A-{\mu\cdot 1_{nd}1_{nd}^\top}) \circ G^{\star}G^{\star \top}  \|+\sigma\Vert A\circ Z\Vert\notag\\
	&\leq 2c_0 \sqrt{2n \mu}+\frac{\kappa_1 n^{1/4}p^{1/2}}{d}\cdot c_0 \sqrt{2nd\mu} \leq \frac{n^{3/4}\mu}{60 d^{1/2}}.\label{cor-con1}
	\end{align}
	In order to estimate the term $\| (A \circ \Delta)G^\star  \|_\infty$, we need a bound for the term $\| \sum_{j\neq i} W_{ij}Z_{ij} \|$.
	Recall that we can have $P(\mathcal{A}^c) \leq n^{-11}$ where the event $\mathcal{A}$ is defined in \eqref{eq: conditionevent}.	
Since each element of $\sum_{j\neq i} W_{ij}Z_{ij}$ is the sum of i.i.d. Gaussian variables, we know for each $k,l\in[d]$ that $\left(\sum_{j\neq i} W_{ij}Z_{ij}\right)_{kl}$ is a mean-zero Gaussian random variable with 
variance $\sigma_{kl}^2=\sum_{j\neq i} W_{ij}^2=\sum_{j\neq i} W_{ij}\leq 2np$ under the event $\mathcal{A}$. Thus, for any $A\in\mathcal{A}$ we know that for any $t>0$, with probability at least $1-2\exp(-t^2)$,
	$$\left\| \sum_{j\neq i} W_{ij}Z_{ij} \right\|\leq c_2\cdot \max_{k,l} \left\Vert\left(\sum_{j\neq i} W_{ij}Z_{ij}\right)_{kl}\right\Vert_{\psi_2} (\sqrt{d}+t)\leq c_3 \cdot \sqrt{np} (\sqrt{d}+t) $$
	for some constant $c_2, c_3>0$ according to \cite[Theorem 4.4.5, Example 2.5.8]{vershynin2018high}, where $\Vert\cdot\Vert_{\psi_2}$ is the sub-Gaussian norm defined in \cite[Definition 2.5.6]{vershynin2018high}.
	Thus, let $t=c_0\sqrt{d\log n}$ for sufficient large $c_0$, and for some constant $c_4>0$ it follows that
	\begin{align*}
		\sup_{A \in \mathcal{A}} P_A\left( \left \| \sum_{j\neq i} W_{ij}Z_{ij} \right\| \geq c_4 \sqrt{ndp\log n}  \right) \leq n^{-11},
	\end{align*}
and consequently
	\begin{align*}
		P\left(\left\| \sum_{j\neq i} W_{ij}Z_{ij}\right\| \geq c_4\sqrt{ndp\log n}\right) \leq P(\mathcal{A}^c) +\sup_{A \in \mathcal{A}} P_\mathcal{A}\left( \left \| \sum_{j\neq i} W_{ij}Z_{ij} \right\| \geq c_4 \sqrt{ndp\log n}  \right) \leq 2 n^{-11} .
	\end{align*}
Thus, we readily obtain from  $2\mu\geq p$  with probability at least $1-n^{-10}$ (from \eqref{eq: mupp2}), $0 <\sigma \leq \frac{\kappa_1 n^{1/4}p^{1/2}}{d} $ and the union bound that, 
	\begin{align*}
	    \| (A \circ \Delta)G^\star  \|_\infty  = \sigma \cdot\max\limits_{i\in [n]} \left\| (A \circ Z)^\top_i G^\star\right\| 
		= \sigma \cdot \max_{i\in [n]} \left\| \sum_{j\neq i} W_{ij}Z_{ij}G_j^\star \right\| 
		&= \sigma \cdot \max_{i\in [n]} \left\| \sum_{j\neq i} W_{ij}Z_{ij} \right\| \notag\\
		&\leq \frac{\kappa_1 n^{1/4}p^{1/2}}{d} \cdot c_4\sqrt{ndp\log n}  \leq \frac{n\mu}{10},
	\end{align*}
holds with high probability, where the third equality is because we can set the ground truth $G_j^\star$, $j\in[n]$ to be $I_d$ without loss of generality for Gaussian noise setting.
The assumption (ii) is directly from \eqref{Concen-key2}, and the assumption (iii) is from  \eqref{cor-con1} with $\frac{\kappa_0 n^{3/4}\mu}{d^{1/2}} \leq \alpha \leq \frac{\kappa_1 np}{d^{1/2}}$ and $2\mu\ge p$  with probability at least $1-n^{-10}$. The proof is complete.
\end{proof}

\section{Conclusions}
In this work, we investigate the characterizations of orthogonal group synchronization problem which is nonconvex but with nice properties, and derive the local error bound property with general additive noise models under incomplete measurements. As an algorithmic consequence,  the linear convergence result of the GPM (with spectral initialization) to the global maximizers
is proved. For future directions, it would be interesting to study the convergence properties of different algorithms through the derived error bound result since it is an algorithm-independent property only related to the synchronization problem itself. Also, though the study for phase synchronization (i.e., orthogonal synchronization on $\mathcal{SO}(2)$) \cite{boumal2016nonconvex,liu2017estimation,zhong2018near} is already quite comprehensive, it is still challenging when extended to $\sod$ ($d\ge3$) without vital properties of $\mathcal{SO}(2)$ (e.g. commutativity). Due to the limited tolerance on the noise, current results for $\sod$ still need to be improved, or we may propose other effective algorithms with better and more robust convergence guarantees. 

\bibliographystyle{abbrv}
\bibliography{journalref}

\end{document}